\documentclass[journal,twoside,web]{IEEEcolor}
\usepackage{generic}
\usepackage{stfloats}
\usepackage{savesym}
\usepackage{amsmath}
\usepackage{textcomp}
\savesymbol{iint}
\restoresymbol{TXF}{iint}
\usepackage{graphicx,subfigure}
\usepackage{epstopdf}
\usepackage{amssymb,amsfonts}
\usepackage{cite}
\usepackage{latexsym}
\usepackage{psfrag}
\usepackage{pifont}
\usepackage{caption}
\usepackage{color}
\usepackage{tikz}
\usepackage{bm}
\usetikzlibrary{arrows,shapes,chains}
\usepackage{mathtools}
\usepackage{cuted}
\usepackage{setspace}
\usepackage{algorithm}
\usepackage{algpseudocode}
\usepackage{amsmath}
\usepackage{mathrsfs}
\allowdisplaybreaks[1]

\usepackage{enumitem}
\newtheorem{theorem}{Theorem}   
\newtheorem{lemma}{Lemma}
\newtheorem{problem}{Problem}
\newtheorem{proposition}{Proposition}

\newtheorem{remark}{Remark}

\newtheorem{definition}{Definition}

\DeclareMathOperator{\rank}{rank}

\DeclareMathOperator{\im}{im}
\DeclareMathOperator{\tr}{tr}

%\usepackage{optidef}
%\def\BibTeX{{\rm B\kern-.05em{\sc i\kern-.025em b}\kern-.08em
%		T\kern-.1667em\lower.7ex\hbox{E}\kern-.125emX}}
%\markboth{Submitted to IEEE Transactions on Automatic Control, VOL. XX, NO. XX, 2023}
%{Submitted to IEEE Transactions on Automatic Control, VOL. XX, NO. XX, 2023}

\begin{document}

\title{Bridging Model Reference Adaptive\\ Control and Data Informativity}
	
\author{Jiwei Wang, Simone Baldi,~\IEEEmembership{Senior Member,~IEEE} and Henk J. van Waarde,~\IEEEmembership{Member,~IEEE} %\vspace{-1em}
		\thanks{This work was partially supported by Jiangsu Provincial Scientific Research Center of Applied Mathematics grant BK20233002.
		H. J. van Waarde acknowledges financial support by the Dutch Research Council under the NWO Talent Programme Veni Agreement (VI.Veni.222.335). (Corresponding author: Simone Baldi)}
		\thanks{J. Wang is with School of Cyber Science and Engineering, Southeast University, China, and with the Bernoulli Institute for Mathematics, Computer Science and Artificial Intelligence, University of Groningen, The Netherlands (email: jiwei.wang@rug.nl).}
		\thanks{S. Baldi is with School of Mathematics, Southeast University, China (email: simonebaldi@seu.edu.cn).}
		\thanks{H. J. van Waarde is with the Bernoulli Institute for Mathematics, Computer Science and Artificial Intelligence, University of Groningen, The Netherlands (email: h.j.van.waarde@rug.nl).}}
	
\maketitle
	
\begin{abstract}

The goal of model reference adaptive control (MRAC) is to ensure that the trajectories of an unknown dynamical system track those of a given reference model. 
This is done by means of a feedback controller that adaptively changes its gains using data collected online from the closed-loop system. 
One of the approaches to solve the MRAC problem is to impose conditions on the data that guarantee convergence of the gains to a solution of the so-called matching equations. 
% In this case, not only the trajectories of the reference model are tracked, but also its dynamics are matched exactly by the closed-loop system. 
In the literature, various extensions of the concept of persistent excitation have been proposed in an effort to weaken the conditions on the data required for this convergence.
Despite these efforts, it is not well-understood what conditions are necessary and sufficient for ensuring convergence of MRAC to a solution of the matching equations.
In this paper, we propose a new framework to study the MRAC problem, using the concept of data informativity.
Our main contribution is to provide \emph{necessary and sufficient} conditions for the existence of an adaptive law that guarantees convergence of the gains to a solution of the matching equations, and to provide a recipe for its construction.
While existing excitation conditions imply that the system can be uniquely identified from the collected data, our results show that this is not necessary for the convergence of the feedback gains.
%These necessary and sufficient conditions can be readily checked online via linear algebra as new data are generated by the closed-loop system. 
%Our results reveal that existing excitation conditions impose stronger requirements on the collected data than required.
% Notably, the necessary and sufficient conditions provided in this paper are weaker than those for unique system identification. 
% This means that convergence of \mbox{MRAC} can sometimes be ensured even in situations where the data do not contain sufficient information to uniquely identify the system.
\end{abstract}
	
\begin{IEEEkeywords}
	Multi-variable MRAC, persistence of excitation, parameter convergence, data informativity.
\end{IEEEkeywords}
	
\section{Introduction}
	
Adaptive control has a rich history, which can be traced back to pioneering work from the 1950s \cite{aseltine1958survey}. 
Adaptive control aims at dealing with the inevitable uncertainty in control systems by means of adjustable-gain controllers, that are adapted online using the data generated by the closed-loop system.
Among the various adaptive control strategies developed over the years, a common approach is model reference adaptive control (MRAC), where the control gains are adjusted so that the closed-loop system tracks a predefined reference model  \cite{nguyen2018model}. 

Meanwhile, offline data-driven control approaches also have a rich history: here, the system to be controlled is uncertain as in adaptive control, but the data are collected offline \cite{willems2012data,de2019formulas}.
In this setting, the controller is constructed in one shot using a batch of data.
% Online data-driven control approaches have also been recently proposed for some classes of problems \cite{rotulo2022online,liu2023online,eising2024data}. 
Among the various offline data-driven control strategies, the data informativity approach has recently emerged \cite{van2020data}. 
Data informativity provides necessary and sufficient conditions under which the data contain sufficient information for control design.
In what follows, we provide an overview of representative MRAC approaches and data informativity approaches, so as to clarify the gaps in the respective fields and how these gaps are closed in this work.

%Such a task is also called matching in the literature: indeed, the key concept in MRAC is the solution of the matching equations, that is, appropriate gains must exist that allow the system to be controlled to match the behavior of the reference model. 
%When the system to be controlled is unknown, such a solution should be sought online using the adjustment mechanism.
%MRAC loops have also been applied across a wide range of fields, spanning the early aircraft applications described in \cite{whitaker1958design,dydek2010adaptive},  aerospace systems \cite{nguyen2018model}, grid-connected converters\cite{hollweg2023model}, robotic manipulators \cite{zhang2017review}, and more.

\subsection{State of the art on data requirements in MRAC}

Despite decades of history, MRAC still remains a prominent research area, with advances continually emerging, such as switched MRAC \cite{di2012hybrid,kersting2017direct,yuan2020lyapunov}, robust MRAC \cite{xie2018h,franco2021robust}, distributed MRAC \cite{guo2020distributed,yue2023model}, partial-state feedback MRAC \cite{song2019partial}, and learning-based MRAC \cite{wang2023mrac,annaswamy2023integration}, just to mention a few. 
The key concept common to all MRAC approaches are the matching equations: control gains must exist so that the closed-loop system matches the dynamics of the reference model. 
Assuming the matching equations have a solution, a pertinent question is whether the online adjustment mechanism converges to such a solution.
One of the approaches to MRAC is to impose conditions on the closed-loop data that guarantee convergence of the adjustment mechanism to a solution of the matching equations.

It is well known that the most traditional requirement on the closed-loop data in classical MRAC approaches (direct or indirect \cite{ioannou2006adaptive,duarte1989combined}) goes under the name of persistence of excitation (PE), which requires the excitation provided by the input data to never vanish \cite{boyd1986necessary, annaswamy2021historical}.
In the absence of PE, these classical adaptive schemes can still achieve control objectives like convergence of the tracking error \cite{aastrom1973self}, but the adaptive gains may fail to converge to the desired values. In other words, convergence of the gains to a desired solution imposes stronger requirements on data as compared to mere convergence of the tracking error.
Motivated by the restrictive nature of the PE, a productive line of research has been devoted to relaxing the requirements imposed on data that guarantee convergence to a solution of the matching equations.
Representative methods are composite/combined MRAC \cite{lavretsky2009combined}, concurrent learning MRAC \cite{chowdhary2010concurrent}, and works where PE has been relaxed to rank excitation \cite{chowdhary2013concurrent}, finite excitation \cite{cho2017composite,lee2019concurrent} and initial excitation \cite{roy2017combined,katiyar2022initial}.
While all these relaxed excitation conditions are sufficient to guarantee convergence to a solution of the matching equations, a natural question is whether they are also necessary.
Our work will show that such conditions are, in general, not necessary to guarantee convergence to a solution of the matching equations.
It is worth mentioning that, while the absence of PE conditions in some adaptive control schemes has been investigated in \cite{kogan1994locally,kogan1996locally}, these results do not apply to the MRAC problem (see the discussion in Section~\ref{excitation}).

Although most of the methods above address multi-variable MRAC problems (where the number of inputs and reference signals is larger than one \cite{tao2014multivariable}), such a multi-variable setting requires additional assumptions on the system to arrive at a solution to the problem.
It is common in multi-variable MRAC to require a known system's input matrix \cite{tao2014multivariable,lavretsky2009combined,chowdhary2010concurrent,chowdhary2013concurrent,cho2017composite,lee2019concurrent} or other structural assumptions \cite{costa2003lyapunov,imai2004multivariable,roy2017combined,song2021partial} to handle the uncertainty.
Our work will remove such restrictions, requiring no prior knowledge on the input matrix and no other structural assumptions on the unknown system apart from controllability.
Moreover, we are able to guarantee convergence to a solution of the matching equations even when the matching equations have a possibly infinite number of solutions, in contrast with existing multi-variable MRAC approaches where the structural assumptions are such that only one solution of the matching equations exists. 

% \cite{han2011new}

\subsection{State of the art on adaptation in data informativity}

In the literature on offline data-driven control, a key role is played by Willems et al.'s fundamental lemma \cite{willems2005note}.
This result provides a data-dependent representation of the system based on persistently exciting data \cite{de2019formulas,breschi2021direct,schmitz2022willems,pan2022stochastic}.
Similar to the quest for relaxed PE requirements in adaptive control, research efforts have been made to relax the requirement for persistently exciting data. 
An answer in this direction was provided by the data informativity framework \cite{van2020data}, demonstrating that PE is not necessary in several data-driven analysis and control problems; also see \cite{van2023informativity} for more details.
By adopting the data informativity perspective, \cite{wang2024necessary} studied the requirements that data must have in order to solve an offline model reference control: there, it was shown that persistently exciting data are also not necessary for solving such problem. 

However, offline data-driven control is intrinsically different from MRAC, because the data are not generated online by the time-varying closed-loop system obtained from interconnecting the dynamical system and the adaptive feedback law.
Indeed, in existing work on the data informativity framework, controllers are constructed offline in one shot, using a batch of data.
Online experiment design procedures have been studied in \cite{van2021beyond}: however, such procedures only aim at generating a batch of informative data and do not incorporate a control objective, as is customary in adaptive control.
This motivates us to further investigate the relation between MRAC and data informativity.
In particular, a key question is: can data informativity be turned from an offline framework to an online one, where data are generated in real-time by the closed-loop system as the controller is adjusted online?
In this work, we develop a novel MRAC approach based on the concept of data informativity, thus bridging these two realms. 
Because data informativity has been shown to give necessary and sufficient conditions for several data-driven control problems, a crucial question is: 
can necessary and sufficient conditions be established for the existence of an adaptive law that ensures the convergence of MRAC to a solution of the matching equations?
This work gives a positive answer and provides a recipe to design such an adaptive law.

\subsection{Contributions}

This paper studies multi-variable MRAC using the framework of data informativity.
The main contributions are as follows:
\begin{itemize}
	\item 
	We establish a connection between data informativity for model reference control \cite{wang2024necessary} and data informativity for system identification (Proposition \ref{TMC} \& Theorem \ref{TBm}).
	Specifically, we show that data informativity for system identification is sufficient but not necessary for data informativity for model reference control.
	It becomes necessary only in special cases.
	This sets the stage for studying MRAC through the lens of data informativity.
	\item 
	We provide two equivalent conditions for the existence of an adaptive law guaranteeing asymptotic convergence of the associated gains to a solution of the matching equations (Theorem \ref{Texist}). 
	The first condition involves the solvability of the matching equations themselves. The second condition asserts that the underlying system is capable of generating data that are informative for model reference control.
	\item
	Following the existence analysis, we propose an MRAC approach that only exploits data informativity for model reference control, without relying on data informativity for system identification or persistence of excitation conditions.
	The framework relies on data informativity as in work by the authors on offline model reference control \cite{wang2024necessary}. 
	However, we develop new conditions for data informativity for model reference control that can be checked online via linear algebra, as data are generated at every time step by the closed-loop system.
	This provides an online method for collecting informative data (Lemma \ref{LT}), which is then utilized to design a controller along with an adaptive law. 
	The online collection of informative data marks a key difference with various online approaches \cite{rotulo2022online,liu2023online,eising2024data} where the conditions on data are assumed a priori rather than imposed by suitable design of the \mbox{inputs}.
	We also prove that, with the proposed online method, the controller gains will converge to a solution of the matching equations (Theorem \ref{TMRAC}), and the tracking error is bounded and will converge to zero (Theorem \ref{Pe}).
	Notably, our MRAC method operates on general multi-variable input-state systems without structural assumptions on the unknown system matrices.
	Even in scenarios where the ideal controller is not unique, our framework guarantees that the controller gains will converge to a solution of the matching equations.
%	\item 
%	We propose an MRAC algorithm utilizing these theoretical results (Algorithm \ref{AMRAC}) and consider a highly maneuverable aircraft as the numerical example to illustrate the 	algorithm.
	\end{itemize}

\subsection{Outline}

	The remainder of this paper is structured as follows.
	In Section II, notation and preliminary results about Willems et al.'s fundamental lemma are provided. 
	Section III presents the mathematical formulation of MRAC.
	In Section IV, key results on data informativity for system identification and data informativity for model reference control are recalled and compared.
	The proposed framework for online MRAC is presented in Section V: the framework includes necessary and sufficient conditions for the existence of an adaptive law that guarantees convergence of the gains to a solution of the matching equations, and a recipe for constructing such an adaptive law.
	This section also discusses how the proposed solution relates to the requirements imposed on data in state-of-the-art MRAC methods.
	In Section VI, numerical examples are provided.
	Concluding remarks are given in Section VII.
	
% \newpage
	
\section{Preliminaries}
	
\subsection{Notation}
	We denote the set of \emph{positive integers} by $ \mathbb{N} $, the set of \emph{non-negative integers} by $ \mathbb{Z}_+ $, and the set of \emph{non-negative real numbers} by $ \mathbb{R}_{+} $.
	The \textit{Euclidean norm} of a vector $ v \in \mathbb{R}^n $ is denoted by $ |v| $.
	The \textit{Frobenius norm} and \textit{induced $ 2 $-norm} of a matrix $ A\in \mathbb{R}^{n\times m} $ are denoted by $ \|A\|_F $ and $ \|A\|_2 $, respectively.
	The \textit{identity matrix} of appropriate size is denoted by $ I $.
	A square matrix is called \textit{Schur} if all its eigenvalues have modulus strictly less than 1, and is \textit{Hurwitz} if all its eigenvalues have negative real part.
	A symmetric matrix \mbox{$ A=A^\top \in \mathbb{R}^{n\times n} $} is said to be \textit{positive definite} (denoted by $ A>0 $) if $ v^{\top}Av>0 $ for all nonzero $ v\in\mathbb{R}^n $.
	Moreover, $ A $ is called \emph{positive semidefinite} if $v^\top A v \geq 0$ for all $v \in \mathbb{R}^n$. 
	This is denoted by $A \geq 0$. 
	For two symmetric $n \times n$ matrices $A$ and $B$, we use the notation $A \geq B$, meaning that $A-B \geq 0$. 
	We denote by $\ell_\infty^{n,m}$ the space of all sequences $\{S(t)\}_{t = 0}^\infty$ for which $S(t) \in \mathbb{R}^{n \times m}$ and
	$$
	|| S ||_\infty := \sup_{t \in \mathbb{Z}_+} ||S(t)||_2 < \infty. 
	$$
In addition, we use the shorthand notation $\ell_\infty^n := \ell_\infty^{n,1}$.
%	The \textit{$ \ell_{\infty} $ norm} of a discrete-time matrix-valued signal $ S: \mathbb{N} \to \mathbb{R}^{n\times m} $ is denoted by $ \|S\|_{\infty}:= \sup_{t\in\mathbb{N}}\|S(t)\|_2 $, where we say that $ S\in\ell_{\infty} $ if there exists $ b>0 $ such that $ \|S\|_{\infty}<b $.
	
	%and its \textit{Moore-Penrose pseudo-inverse} is denoted by $ A^{\dagger} $.
	%A matrix $ A \in \mathbb{S}^{n} $ is said to be \textit{positive definite} (denoted by $ A>0 $) if $ x^{\top}Ax>0 $ for any nonzero $ x\in\mathbb{R}^n $, and \textit{positive semidefinite} (denoted by $ A\geq0 $) if $ x^{\top}Ax\geq0 $ for any $ x\in\mathbb{R}^n $.
	%\textit{Negative definite} and \textit{negative semidefinite} matrices are defined analogously.
	%We denote $ A\not\geq0 $ if $ A $ is not positive semidefinite.

\subsection{Behaviors and Willems et al.'s fundamental lemma}

In this section, we recall the fundamental lemma by Willems et al. \cite{willems2005note}.
Consider a discrete-time input-state system
\begin{equation}\label{s}
x(t+1)=A_{\rm s}x(t)+B_{\rm s}u(t),
\end{equation}
where $t \in \mathbb{Z}_+$, $ x(t) \in \mathbb{R}^n $ is the state, $ u(t) \in \mathbb{R}^m $ is the control input, and $ A_{\rm s} \in \mathbb{R}^{n\times n} $ and $ B_{\rm s} \in \mathbb{R}^{n\times m} $ are the system matrices.
We assume throughout the paper that $ (A_{\rm s},B_{\rm s}) $ is controllable. 
Define the behavior 
$$
\mathfrak{B} := \{(u,x) : \mathbb{Z}_+ \to \mathbb{R}^m \times \mathbb{R}^n \mid \eqref{s} \text{ holds for all } t \in \mathbb{Z}_+\}.
$$
We call elements of $\mathfrak{B}$ (input-state) trajectories of \eqref{s}. 
Given a trajectory $(u,x) \in \mathfrak{B}$ and an integer $t \in \mathbb{N}$, define the matrices $U_-(t)$ and $X(t)$ as
\begin{equation}\label{d}
\begin{aligned}
U_-(t):=&\begin{bmatrix}
u(0)&u(1)&\cdots&u(t-1)
\end{bmatrix},
\\
X(t):=&\begin{bmatrix}
x(0)&x(1)&\cdots&x(t)
\end{bmatrix}.
\end{aligned}
\end{equation}
Moreover, we introduce the following matrices
\begin{equation}\label{sd}
\begin{aligned}
X_-(t):=&\begin{bmatrix}
x(0)&x(1)&\cdots&x(t-1)
\end{bmatrix},\\
X_+(t):=&\begin{bmatrix}
x(1)&x(2)&\cdots&x(t)
\end{bmatrix}.
\end{aligned}
\end{equation}
Then, for any $t \in \mathbb{N}$, the restricted behavior\footnote{Note that the notion of restricted behavior adopted here is slightly different from the literature \cite{willems2005note} because we work with input trajectories of length $t$ and state trajectories of length $t+1$. This is done to facilitate the ensuing discussion.} is defined as
$$
\mathfrak{B}_t := \{(U_-(t),X(t)) \mid (u,x) \in \mathfrak{B}\}.
$$
We call elements of $\mathfrak{B}_t$ restricted trajectories of \eqref{s}. 
In the subsequent analysis, we also view elements of $\mathfrak{B}_t$ as data sets. 
Let $ \ell $ be a positive integer such that $ \ell\leq t $.
Then, the \textit{Hankel matrix} of $ U_-(t) $ of depth $ \ell $ is denoted by
\begin{equation*}
\mathcal{H}_\ell(U_-(t))=
\begin{bmatrix}
u(0)&u(1)&\cdots&u(t-\ell)\\
u(1)&u(2)&\cdots&u(t-\ell+1)
\\
\vdots&\vdots&&\vdots\\
u(\ell-1)&	u(\ell)&\cdots&u(t-1)\\
\end{bmatrix}.
\end{equation*}
% Note that the subscript $ l $ refers to the number of block rows of $ \mathcal{H}_l(U_-) $.
\begin{definition}
	$ U_-(t) $ is said to be \textit{persistently exciting of order $ \ell $} if $ \mathcal{H}_\ell(U_-(t)) $ has full row rank.
\end{definition}
By applying Willems et al.'s fundamental lemma to input-state systems, we obtain a rank condition on the input-state data, assuming that the input is persistently exciting of sufficiently high order.
\begin{lemma} \label{LW}
	($ \hspace{-0.96ex} $\cite[Corollary 2]{willems2005note})
	Let $t \in \mathbb{N}$ and consider the data $(U_-(t),X(t)) \in \mathfrak{B}_t$.
	If $ (A_{{\rm s}},B_{{\rm s}}) $ is controllable and $ U_-(t) $ is persistently exciting of order $ n + 1 $, then 
	\begin{equation}\label{frc}
	\rank\left(\begin{bmatrix}
	X_-(t)\\U_-(t)
	\end{bmatrix}\right)=n+m.
	\end{equation}
\end{lemma}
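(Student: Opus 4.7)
The plan is to prove that the $n + m$ rows of $\begin{bmatrix} X_-(t) \\ U_-(t) \end{bmatrix}$ are linearly independent; since the rank is trivially at most $n+m$, this yields the claimed equality. Concretely, I will show that any $\eta \in \mathbb{R}^n$ and $\xi \in \mathbb{R}^m$ satisfying $\eta^\top X_-(t) + \xi^\top U_-(t) = 0$ must both vanish.

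First, I would propagate the pointwise relation $\eta^\top x(k) + \xi^\top u(k) = 0$ through the state update $x(k+1) = A_{\rm s} x(k) + B_{\rm s} u(k)$. By a short induction on $j$, for every $j \in \{0, 1, \ldots, n\}$ and every $k$ with $k+j \leq t-1$,
\[
\eta^\top A_{\rm s}^j x(k) + \sum_{i=0}^{j-1} \eta^\top A_{\rm s}^{j-1-i} B_{\rm s} u(k+i) + \xi^\top u(k+j) = 0.
\]
To eliminate the state-dependent term in the $j = n$ instance, I would invoke the Cayley--Hamilton theorem, writing $A_{\rm s}^n = \sum_{i=0}^{n-1} c_i A_{\rm s}^i$, and substitute the $j = 0, 1, \ldots, n-1$ identities (at the same $k$) for $\eta^\top A_{\rm s}^i x(k)$. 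The outcome is a purely input-dependent identity of the form $\Gamma \, [u(k)^\top \; u(k+1)^\top \; \cdots \; u(k+n)^\top]^\top = 0$, valid for every $k \in \{0, 1, \ldots, t-1-n\}$, where $\Gamma \in \mathbb{R}^{1 \times m(n+1)}$ is a fixed row vector built from $\eta$, $\xi$, $A_{\rm s}$, $B_{\rm s}$ and the coefficients $c_i$.

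Reading these identities across $k$, they amount to $\Gamma \, \mathcal{H}_{n+1}(U_-(t)) = 0$. The persistence of excitation hypothesis forces $\mathcal{H}_{n+1}(U_-(t))$ to have full row rank, so $\Gamma = 0$. Inspecting $\Gamma$ block by block, its trailing $m$-block equals $\xi^\top$ (the Cayley--Hamilton substitutions only produce terms in $u(k), \ldots, u(k+n-1)$ and thus cannot contribute to this block), giving $\xi = 0$. Peeling off the remaining blocks from right to left while carrying $\xi = 0$ forward recursively yields $\eta^\top B_{\rm s} = \eta^\top A_{\rm s} B_{\rm s} = \cdots = \eta^\top A_{\rm s}^{n-1} B_{\rm s} = 0$. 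Hence $\eta^\top$ annihilates the controllability matrix $[B_{\rm s} \; A_{\rm s} B_{\rm s} \; \cdots \; A_{\rm s}^{n-1} B_{\rm s}]$, which has rank $n$ by controllability of $(A_{\rm s}, B_{\rm s})$; therefore $\eta = 0$.

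The main bookkeeping obstacle is pinning down the block structure of $\Gamma$ precisely enough to see that the trailing block is untouched by the Cayley--Hamilton substitutions, so that $\xi$ drops out first and the remaining blocks then cascade to give the $\eta^\top A_{\rm s}^j B_{\rm s} = 0$ conditions in the right order. Once this is in place, the rest of the argument is essentially direct, matching the classical derivation of the fundamental lemma for input-state systems.
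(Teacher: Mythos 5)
Your proof is correct. Note, however, that the paper itself does not prove this statement at all: it is quoted verbatim as Corollary 2 of Willems et al.\ and used as a black box, so there is no in-paper argument to match yours against. Compared with the cited source, whose derivation goes through the behavioral framework (controllable behaviors, state maps, and the general persistency-of-excitation theorem, then specialized to input-state systems), you give a direct, self-contained state-space argument: take a left annihilator $(\eta^\top,\xi^\top)$ of the stacked matrix, propagate $\eta^\top x(k)+\xi^\top u(k)=0$ through the recursion, eliminate the state via Cayley--Hamilton to obtain $\Gamma\,\mathcal{H}_{n+1}(U_-(t))=0$, conclude $\Gamma=0$ from persistency of excitation, read off $\xi=0$ from the trailing block (which the Cayley--Hamilton substitutions indeed cannot touch), and cascade from right to left to get $\eta^\top A_{\rm s}^{j}B_{\rm s}=0$ for $j=0,\dots,n-1$, whence $\eta=0$ by controllability. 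The bookkeeping you flag works out: with $\xi=0$ the coefficient of $u(k+j)$ is $\eta^\top A_{\rm s}^{n-1-j}B_{\rm s}$ minus a combination of strictly lower powers $\eta^\top A_{\rm s}^{i}B_{\rm s}$ already shown to vanish in the downward induction, so the cascade is sound. Your route is more elementary and makes the lemma self-contained, at the cost of being specific to the input-state setting, whereas the behavioral proof in the cited reference yields the more general input-output/deeper-Hankel versions of the result; also note that the persistency hypothesis itself forces $t\geq n+m(n+1)$, so the depth-$(n+1)$ Hankel matrix you use is automatically well defined.
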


%We emphasize that throughout this paper, it is assumed that the state can be measured directly.

\section{Problem formulation}\label{SPF}

Consider the discrete-time reference system model
\begin{equation}\label{r}
x_{\rm m}(t+1)=A_{\rm m}x_{\rm m}(t)+B_{\rm m}r(t),
\end{equation}
where $ x_{\rm m}(t) \in \mathbb{R}^n $ is the reference state, $ r(t)\in \mathbb{R}^p $ is the reference input with $ p\leq m $, and $ A_{\rm m} \in \mathbb{R}^{n\times n}, B_{\rm m} \in \mathbb{R}^{n\times p} $ are the reference system matrices.
We assume throughout the paper that $ (A_{\rm m},B_{\rm m}) $ is controllable and $ A_{\rm m} $ is Schur.
The problem of model reference control is to find matrices $ K\in \mathbb{R}^{m\times n} $ and $ L\in \mathbb{R}^{m\times p} $ that satisfy the so-called \textit{matching equations} \cite{ioannou2006adaptive}
\begin{equation}\label{mc}
A_{\rm s}+B_{\rm s}K=A_{\rm m}, \quad
B_{\rm s}L=B_{\rm m}.
\end{equation}
We say that the model reference control problem is solvable if there exist matrices $K$ and $L$ satisfying \eqref{mc}.  
% The equations in \eqref{mc} ensure that the error $e(t)=x(t)-x_{\rm m}(t)$ converges to zero.
%A well-known result in the literature \cite{ioannou2006adaptive} is that the solution to the model reference control problem requires matrices $ K\in \mathbb{R}^{m\times n} $ and $ L\in \mathbb{R}^{m\times p} $ that satisfy the so-called \textit{matching equations}
%\begin{equation}\label{mc}
%A_{\rm s}+B_{\rm s}K=A_{\rm m}, \
%B_{\rm s}L=B_{\rm m}.
%\end{equation}
By \eqref{mc}, the closed-loop system formed by \eqref{s} and the fixed-gain controller 
\begin{equation*}
u(t) = Kx(t)+Lr(t),
\end{equation*}
results in the error dynamics $ e(t + 1) = A_{\rm m}e(t) $, where \mbox{$e(t)=x(t)-x_{\rm m}(t)$}. 
Since $A_{\rm m}$ is Schur, we note that $e(t)$ converges to zero for any reference input and any pair of initial states of the system \eqref{s} and of the reference model \eqref{r}.	
The model reference control problem becomes an MRAC problem when $ (A_{{\rm s}},B_{{\rm s}}) $ is unknown, and the above fixed-gain controller is replaced with an adaptive-gain controller
\begin{equation}\label{controller}
u(t) = \hat{K}(t)x(t)+\hat{L}(t)r(t)+v(t),
\end{equation} 
where the gains $ \hat{K}(t)\in \mathbb{R}^{m\times n} $ and $ \hat{L}(t)\in \mathbb{R}^{m\times p} $ are updated online at each time step using the input-state data, and the signal $ v:\mathbb{Z}_+\to\mathbb{R}^{\rm m} $ can be used to excite the system. 
In this work, we will consider $v$ having finite support, meaning that excitation can only be provided for a finite number of time steps.
Formally, the MRAC problem studied in this work is as follows.

\begin{problem}\label{P}
%\textit{Problem:}
	Given the reference model \eqref{r}, provide necessary and sufficient conditions under which there exist, for \mbox{$ t\in\mathbb{Z}_+ $,}
	\begin{itemize}
		\item integers $ i_t,j_t\in\mathbb{N} $,
		\item functions $ \alpha_t $ and $ \beta_t $, and
		\item a finitely supported signal $ v $
	\end{itemize}
	such that, for any reference input $r \in \ell_\infty^p$, and any initial conditions 
	$$ x(0)\in\mathbb{R}^{n}, \hat{K}(0)\in \mathbb{R}^{m\times n}, \hat{L}(0)\in \mathbb{R}^{m\times p} \text{ and } \Theta(0)\in \mathbb{R}^{i_0\times j_0}, $$ 
	the adaptive law
	\begin{equation}
	\label{pTheta}
	\Theta(t+1) = \alpha_t(U_{-}(t),X(t),\Theta(t))\in \mathbb{R}^{i_{t+1} \times j_{t+1}}
	\end{equation}
	and the adaptive gains
	\begin{equation}
	\label{hatKL}
	\begin{bmatrix}
	\hat{K}(t)&\hat{L}(t)
	\end{bmatrix} = \beta_t(U_{-}(t),X(t),\Theta(t))
	\in \mathbb{R}^{m \times n} \times \mathbb{R}^{m \times p}
	\end{equation}
	are such that
\begin{equation}\label{converge}
	\lim_{t\to\infty} A_{\rm s}+B_{\rm s}\hat{K}(t) = A_{\rm m},\
	\lim_{t\to\infty} B_{\rm s}\hat{L}(t) = B_{\rm m}.
\end{equation}
Here, $ (U_{-}(t),X(t))\in\mathfrak{B}_t $ is the restricted trajectory of \eqref{s} resulting from $ x(0) $ and the control input in \eqref{controller}.
Moreover, if they exist, provide a recipe to construct $ i_t,j_t,\alpha_t,\beta_t $ and $ v $.
\end{problem}

\section {Data Informativity for Model Reference Control}

Before embarking on	the solution to the MRAC problem, we first solve a preliminary problem in the current section. 
Namely, we recall conditions under which a solution $ (K,L) $ of the matching equations \eqref{mc} can be obtained from data $ (U_-(t),X(t)) $. 
We will also compare these conditions to the ones required for unique system identification.

\subsection{Conditions on data for system identification and for model reference control}

%	Let $ (A_{\rm s},B_{\rm s}) $ be unknown yet controllable, and consider a set of input/state data generated by \eqref{s}:
%	\begin{equation}\label{d}
%	\begin{aligned}
%	X:=&\begin{bmatrix}
%	x(0)&x(1)&\cdots&x(T)
%	\end{bmatrix},\\
%	U_-:=&\begin{bmatrix}
%	u(0)&u(1)&\cdots&u(T-1)
%	\end{bmatrix}.
%	\end{aligned}
%	\end{equation}
%	Let us use the following notation
%	\begin{equation}\label{sd}
%	X_+:=\begin{bmatrix}
%	x(1)&\cdots&x(T)
%	\end{bmatrix},
%	X_-:=\begin{bmatrix}
%	x(0)&\cdots&x(T-1)
%	\end{bmatrix},
%	\end{equation}
	%The problem is to find necessary and sufficient conditions under which the set of data $ (U_- , X) $ allow to get gains $ (K,L) $ such that the matching equations \eqref{mc} hold. 
	Consider the set of all systems compatible with the data $(U_-(t),X(t)) \in \mathfrak{B}_t$, given by
	\begin{align*}
	\Sigma_{(U_-(t),X(t))}=\{(A,B)\mid X_+(t) = AX_-(t) + BU_-(t)\}.
	\end{align*}
	Since the data $ (U_-(t),X(t)) $ are generated by system \eqref{s}, we obviously have $ (A_{\rm s},B_{\rm s})\in\Sigma_{(U_-(t),X(t))} $.
	However, the set $ \Sigma_{(U_-(t),X(t))} $ may contain other systems compatible with the data.
	This leads to the following definition.
	\begin{definition}
		($ \!\!  $\cite{van2020data}). 
		The data $ (U_-(t),X(t)) $ are \emph{informative for system identification} if $ \Sigma_{(U_-(t),X(t))} = \{(A_{\rm s},B_{\rm s})\} $.
	\end{definition}

	The following basic lemma provides necessary and sufficient conditions for system identification.
	\begin{lemma}\label{LDISI}
		($ \! \! $\cite[Proposition 6]{van2020data}).
		The data $ (U_-(t),X(t)) $ are informative for system identification if and only if \eqref{frc} holds.
	\end{lemma}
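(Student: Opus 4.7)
The plan is to reduce informativity for system identification to the triviality of the left kernel of the stacked data matrix. Any pair $(A,B)$ belongs to $\Sigma_{(U_-(t),X(t))}$ precisely when
$$\begin{bmatrix} A & B \end{bmatrix}\begin{bmatrix} X_-(t) \\ U_-(t) \end{bmatrix} = X_+(t).$$
Since $(U_-(t),X(t)) \in \mathfrak{B}_t$ is generated by \eqref{s}, the pair $(A_{\rm s},B_{\rm s})$ is a particular solution of this matrix equation. Subtracting therefore shows that $(A,B) \in \Sigma_{(U_-(t),X(t))}$ if and only if $\begin{bmatrix} \Delta_A & \Delta_B \end{bmatrix}$ lies in the left kernel of the data matrix, where $\Delta_A := A - A_{\rm s}$ and $\Delta_B := B - B_{\rm s}$.

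Consequently, $\Sigma_{(U_-(t),X(t))} = \{(A_{\rm s},B_{\rm s})\}$ is equivalent to requiring that the only $[\Delta_A\ \Delta_B] \in \mathbb{R}^{n \times (n+m)}$ satisfying
$$\begin{bmatrix} \Delta_A & \Delta_B \end{bmatrix}\begin{bmatrix} X_-(t) \\ U_-(t) \end{bmatrix} = 0$$
is the zero matrix. Applying this condition row by row, it is standard linear algebra that such triviality holds exactly when $\begin{bmatrix} X_-(t) \\ U_-(t) \end{bmatrix}$ has trivial left kernel, i.e., has full row rank $n+m$. This yields both directions of the lemma in a single stroke.

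I do not anticipate any real obstacle here: the entire proof is a one-step translation of the informativity property into a rank condition via the standard particular-plus-homogeneous-solution decomposition. The only minor point of care is that $[\Delta_A\ \Delta_B]$ has $n$ rows rather than one, so the left-kernel argument is applied componentwise — but this is automatic from the definition of matrix multiplication and does not require any further hypothesis on the data.
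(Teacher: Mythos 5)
Your proof is correct: the paper does not prove this lemma itself but cites \cite[Proposition 6]{van2020data}, and your particular-plus-homogeneous decomposition reducing informativity to triviality of the left kernel of $\begin{bmatrix} X_-(t)^\top & U_-(t)^\top\end{bmatrix}^\top$ is exactly the standard argument used there. The row-by-row remark correctly handles the fact that $[\Delta_A\ \Delta_B]$ has $n$ rows, so nothing is missing.
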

	
	Let us now concentrate on the problem of model reference control. 
	% , and a controller \eqref{controller} is applied on \eqref{s} to match \eqref{r}.
	%Then, the objective is to find conditions on the set of data $ (U_- , X) $ under which a controller \eqref{controller} can be designed so that the resulting closed-loop system matches the reference model \eqref{r}. 
	For given gains $ K\in \mathbb{R}^{m\times n} $ and $ L\in \mathbb{R}^{m\times p} $, we define the set of systems that match the reference model \eqref{r} as
	\begin{equation*}
	\Sigma^{K,L}=\{(A,B)\mid A+BK=A_{\rm m}, BL=B_{\rm m}\}.
	\end{equation*}
	Rather than system identification, the objective here is to find a solution $ (K,L) $ of the matching equations \eqref{mc}. 
	However, since $ (A_{\rm s},B_{\rm s}) $ is unknown and the data $ (U_-(t),X(t)) $ may not allow to distinguish the actual $ (A_{\rm s},B_{\rm s}) $ from any other system in $ \Sigma_{(U_-(t),X(t))} $, one needs to find control gains $ (K,L) $ such that \emph{all} systems compatible with the data satisfy the matching equations \eqref{mc}. 
	This leads to the following definition.
	
	\begin{definition}\label{D1} ($ \hspace{-0.96mm} $\cite{wang2024necessary}\hspace{-0.16mm}).
		The data $ (U_-(t),X(t)) $ are \emph{informative for model reference control} if there exist control gains $ (K,L) $ such that $ \Sigma_{(U_-(t),X(t))} \subseteq \Sigma^{K,L} $.
	\end{definition}
	
	The following lemma establishes necessary and sufficient conditions for the data $ (U_-(t),X(t)) $ to be informative for model reference control.
	\begin{lemma}\label{LDIMRC}
		The data $ (U_-(t),X(t)) $ are informative for model reference control if and only if \begin{equation}\label{image}
		\im\begin{bmatrix}
		I&0\\A_{\rm m}&B_{\rm m}
		\end{bmatrix}\subseteq
		\im\begin{bmatrix}
		X_-(t) \\ X_+(t)
		\end{bmatrix}.
		\end{equation}
	\end{lemma}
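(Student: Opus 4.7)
The plan is to recast model-reference informativity as a linear-algebraic image inclusion and then transport this inclusion across the data equation $X_+(t) = A_{\rm s} X_-(t) + B_{\rm s} U_-(t)$. Starting from Definition~\ref{D1}, I would encode the matching equations \eqref{mc} compactly as $[A\ B]\,N = [A_{\rm m}\ B_{\rm m}]$ with $N := \begin{bmatrix} I & 0 \\ K & L \end{bmatrix}$, and the compatibility set as $\Sigma_{(U_-(t),X(t))} = \{(A,B) : [A\ B]\,M = X_+(t)\}$ with $M := \begin{bmatrix} X_-(t) \\ U_-(t) \end{bmatrix}$. Since $(A_{\rm s},B_{\rm s})$ belongs to this set, $\Sigma_{(U_-(t),X(t))}$ is the affine translate $[A_{\rm s}\ B_{\rm s}] + \{[\Delta_A\ \Delta_B] : [\Delta_A\ \Delta_B]\,M = 0\}$. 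Substituting this parametrization into the matching equations yields the intermediate characterization: the data are informative for model reference control iff there exist $(K,L)$ such that $(A_{\rm s},B_{\rm s})$ satisfies \eqref{mc} and every $[\Delta_A\ \Delta_B]$ annihilating $M$ on the right also annihilates $N$ on the right; by orthogonal-complement duality, the latter is equivalent to $\im N \subseteq \im M$.

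For the $(\Leftarrow)$ direction, given \eqref{image} I would pick $G_1, G_2$ satisfying $X_-(t) G_1 = I$, $X_+(t) G_1 = A_{\rm m}$, $X_-(t) G_2 = 0$, $X_+(t) G_2 = B_{\rm m}$, and then define the candidate gains by $K := U_-(t) G_1$ and $L := U_-(t) G_2$. Using $X_+(t) = A X_-(t) + B U_-(t)$ for an arbitrary $(A,B) \in \Sigma_{(U_-(t),X(t))}$, direct substitution gives $A + BK = X_+(t) G_1 = A_{\rm m}$ and $BL = X_+(t) G_2 = B_{\rm m}$, which establishes informativity. For the $(\Rightarrow)$ direction, the intermediate characterization supplies $G = [G_1\ G_2]$ with $MG = N$; evaluating the data equation on $G$ and invoking the matching equations for $(A_{\rm s},B_{\rm s})$ produces $\begin{bmatrix} X_-(t) \\ X_+(t) \end{bmatrix} G = \begin{bmatrix} I & 0 \\ A_{\rm m} & B_{\rm m} \end{bmatrix}$, which is exactly \eqref{image}.

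The main subtlety I anticipate is the quantifier structure hidden inside informativity: a single pair $(K,L)$ must render the matching equations valid for \emph{every} data-compatible $(A,B)$, not only for the true $(A_{\rm s},B_{\rm s})$. The image formulation \eqref{image} encodes this uniformly because the data equation yields $\begin{bmatrix} X_-(t) \\ X_+(t) \end{bmatrix} = \begin{bmatrix} I & 0 \\ A_{\rm s} & B_{\rm s} \end{bmatrix} M$, so the same matrices $G_1, G_2$ that witness \eqref{image} serve both as the source of the controller gains and as a certificate that these gains work for every compatible system. Once this interpretation is in hand, the remaining steps reduce to routine algebraic substitution using the data equation.
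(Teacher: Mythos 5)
Your proposal is correct, and both directions check out: the gain construction $K=U_-(t)G_1$, $L=U_-(t)G_2$ together with the data equation $X_+(t)=AX_-(t)+BU_-(t)$ does yield $A+BK=X_+(t)G_1=A_{\rm m}$ and $BL=X_+(t)G_2=B_{\rm m}$ uniformly over all $(A,B)\in\Sigma_{(U_-(t),X(t))}$, and your intermediate characterization (existence of $(K,L)$ with $(A_{\rm s},B_{\rm s})$ satisfying \eqref{mc} plus the left-annihilator inclusion, equivalently $\im N\subseteq\im M$) is exactly the right way to extract the witness $G$ for the necessity direction. The route differs from the paper's, though: the paper does not reprove the equivalence at all, but only observes that \eqref{image} is the same as the four equations \eqref{mrc} (existence of $V_1,V_2$ with $X_-(t)V_1=I$, $X_-(t)V_2=0$, $X_+(t)V_1=A_{\rm m}$, $X_+(t)V_2=B_{\rm m}$) and then invokes Theorem~1 of \cite{wang2024necessary}. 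What the paper's argument buys is brevity and a clean link to the offline informativity literature; what yours buys is a self-contained proof from Definition~\ref{D1} that makes explicit both the quantifier structure (one $(K,L)$ for \emph{all} compatible systems, handled via the affine parametrization $[A\ B]=[A_{\rm s}\ B_{\rm s}]+\Delta$, $\Delta M=0$, and orthogonal-complement duality) and the data-based gain formula, which is precisely the structure reused later in the paper (cf.\ \eqref{gain}, where the gains are $\Phi_{U_-}(t)\Theta(t)$). One cosmetic point: where you write that $[\Delta_A\ \Delta_B]$ ``annihilates $M$ on the right,'' you mean the condition $[\Delta_A\ \Delta_B]M=0$, i.e., the rows of $\Delta$ lie in $(\im M)^\perp$; stating it that way avoids ambiguity before applying the duality $(\im M)^\perp\subseteq(\im N)^\perp\Leftrightarrow\im N\subseteq\im M$.
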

\vspace{0.6em}
\begin{proof}
	We prove that Lemma \ref{LDIMRC} is a reformulation of \cite[Theorem 1]{wang2024necessary}.
To show this, note that \eqref{image} is equivalent to the existence of matrices $ V_1\in\mathbb{R}^{t\times n} $ and $ V_2\in\mathbb{R}^{t\times p} $ such that
\begin{equation}\label{V12}
\begin{bmatrix}
I&0\\A_{\rm m}&B_{\rm m}
\end{bmatrix}
=\begin{bmatrix}
X_-(t) \\ X_+(t)
\end{bmatrix}
\begin{bmatrix}
V_1&V_2
\end{bmatrix},
\end{equation}
or, equivalently, 
\begin{equation}\label{mrc}
X_-(t)V_1\!=\!I,\ X_-(t)V_2\!=\!0,\ X_+(t)V_1\!=\!A_{\rm m},\ X_+(t)V_2\!=\!B_{\rm m},
\end{equation}
which are precisely the conditions of \cite[Theorem 1]{wang2024necessary}.
\end{proof}

We note that the conditions of Lemma \ref{LDIMRC} may hold even when the data do \textit{not} enable the unique identification of $ (A_{\rm s},B_{\rm s}) $, as shown in \cite[Example 1]{wang2024necessary}.
This implies that the full rank condition \eqref{frc} is not always necessary to solve the model reference control problem. 
In the following, we will show that, if a solution of the matching equations exists, data informativity for system identification is sufficient for data informativity for model reference control but necessary only in some special cases.
	
\subsection{Relations between the two data informativity conditions}
	\label{relation}
% While the previous discussion shows that unique system identification is not necessary to solve the model reference control problem, there are some special situations in which unique system identification becomes necessary for model reference control.
	
We start by showing that data informativity for system identification implies data informativity for model reference control if the matching equations \eqref{mc} have a solution.
	
\begin{proposition}\label{TMC}
	Assume that there exist gain matrices \mbox{$K \in \mathbb{R}^{m \times n}$} and $L \in \mathbb{R}^{m \times p}$ such that \eqref{mc} holds.
	Then, the data $ (U_-(t),X(t)) $ are informative for model reference control if $ (U_-(t),X(t)) $ are informative for system identification.
\end{proposition}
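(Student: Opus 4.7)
The plan is to exploit the set-theoretic nature of both notions of informativity and reduce the statement to a trivial set inclusion. By Definition of data informativity for system identification, if the data $(U_-(t),X(t))$ are informative for system identification, then $\Sigma_{(U_-(t),X(t))} = \{(A_{\rm s},B_{\rm s})\}$, i.e., the only system compatible with the data is the true one.

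Next I would use the hypothesis that the matching equations \eqref{mc} are solvable: pick any pair $(K,L)$ with $A_{\rm s}+B_{\rm s}K = A_{\rm m}$ and $B_{\rm s}L = B_{\rm m}$. By the very definition of $\Sigma^{K,L}$, this means $(A_{\rm s},B_{\rm s}) \in \Sigma^{K,L}$. Combining the two observations gives the chain
\begin{equation*}
\Sigma_{(U_-(t),X(t))} = \{(A_{\rm s},B_{\rm s})\} \subseteq \Sigma^{K,L},
\end{equation*}
which is exactly the condition for $(U_-(t),X(t))$ to be informative for model reference control in the sense of Definition \ref{D1}.

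Because the argument amounts to chaining one equality with one inclusion, there is essentially no obstacle to overcome; the content of the proposition lies entirely in recognizing that a singleton-valued compatibility set is trivially contained in any set of systems that already contains the true one. No appeal to the matrix characterizations in Lemma \ref{LDISI} or Lemma \ref{LDIMRC} is needed, although one could equivalently phrase the argument via those image/rank conditions if a more algebraic presentation is desired.
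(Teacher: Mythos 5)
Your proof is correct, and it takes a genuinely different and more elementary route than the paper's. You argue purely at the level of the set-theoretic definitions: informativity for system identification collapses $\Sigma_{(U_-(t),X(t))}$ to the singleton $\{(A_{\rm s},B_{\rm s})\}$, and since the hypothesized solution $(K,L)$ of \eqref{mc} places $(A_{\rm s},B_{\rm s})$ inside $\Sigma^{K,L}$, the inclusion required by Definition \ref{D1} is immediate; no characterization lemmas are needed. The paper instead goes through the algebraic characterizations: it invokes the full row rank condition \eqref{frc} (via Lemma \ref{LDISI}) to construct explicit matrices $V_1$ and $V_2$ with
\begin{equation*}
\begin{bmatrix} X_-(t)\\U_-(t)\end{bmatrix}V_1=\begin{bmatrix} I\\K\end{bmatrix},\qquad
\begin{bmatrix} X_-(t)\\U_-(t)\end{bmatrix}V_2=\begin{bmatrix} 0\\L\end{bmatrix},
\end{equation*}
and then verifies that $X_+(t)V_1=A_{\rm m}$ and $X_+(t)V_2=B_{\rm m}$, so that the image condition \eqref{image} of Lemma \ref{LDIMRC} holds. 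What your argument buys is brevity and transparency: the proposition is seen to be a one-line consequence of the definitions. What the paper's route buys is a constructive certificate --- the explicit $V_1,V_2$ witnessing \eqref{image} --- expressed in exactly the algebraic form that the rest of the paper checks online (e.g., in the remark on determining $T^*$ and in the adaptive law), which makes the proof more directly reusable in the later developments. Both proofs are sound.
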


\begin{proof}
	Suppose that $ (U_-(t),X(t)) $ are informative for system identification.
	Let $ (K,L) $ be a solution of the matching equations \eqref{mc}.
	Then, by \eqref{frc}, there exist matrices $ V_1 \in \mathbb{R}^{t\times n} $ and $ V_2 \in \mathbb{R}^{t\times p} $ such that
	\begin{equation*}
	\begin{bmatrix}
	X_-(t)\\U_-(t)
	\end{bmatrix}V_1=
	\begin{bmatrix}
	I\\K
	\end{bmatrix},\
	\begin{bmatrix}
	X_-(t)\\U_-(t)
	\end{bmatrix}V_2=
	\begin{bmatrix}
	0\\L
	\end{bmatrix}.
	\end{equation*}
	Furthermore, we have 
	\begin{align*} X_+(t)V_1&=\begin{bmatrix}
	A_{\rm s}&B_{\rm s}
	\end{bmatrix}\begin{bmatrix}
	X_-(t)\\U_-(t)
	\end{bmatrix}V_1=A_{\rm s}+B_{\rm s}K=A_{\rm m},\\
	X_+(t)V_2&=\begin{bmatrix}
	A_{\rm s}&B_{\rm s}
	\end{bmatrix}\begin{bmatrix}
	X_-(t)\\U_-(t)
	\end{bmatrix}V_2=B_{\rm s}L=B_{\rm m},
	\end{align*}
	implying \eqref{V12}, that is, \eqref{image} holds.
	Then, according to Lemma~\ref{LDIMRC}, $ (U_-(t),X(t)) $ are informative for model reference control.
\end{proof}

Proposition \ref{TMC} shows that, when a solution of the matching equations exists, the full rank condition \eqref{frc} is a sufficient condition for informativity for model reference control.
% On the other hand, Lemma \ref{LDIMRC} shows that unique system identification is not necessary to solve the model reference control problem.
% However, model reference frameworks with unknown $ (A_{\rm s},B_{\rm s}) $ based on initial excitation \cite{roy2017combined} intrinsically requires the data to uniquely identify $ (A_{\rm s},B_{\rm s}) $. 
The following theorem provides conditions under which the full rank condition \eqref{frc} becomes a necessary condition for informativity for model reference control.

\begin{theorem}\label{TBm}
	Assume there exists a controller gain pair $(K,L) \in \mathbb{R}^{m \times n} \times \mathbb{R}^{m \times p}$ that satisfies the matching equations \eqref{mc}.
	The following two statements are equivalent: 
	\vspace{-2ex}
	\begin{itemize}
		\item[(i)] The implication
		\begin{equation*}
		\im\!\begin{bmatrix}
		I&\hspace{-5pt} 0\\A_{\rm m}&\hspace{-5pt}B_{\rm m}
		\end{bmatrix}\!\!\subseteq\!
		\im\!\begin{bmatrix}
		X_-(t) \\ X_+(t)
		\end{bmatrix} \!\Rightarrow
		\rank\!\left(\!\begin{bmatrix}
		X_-(t)\\U_-(t)
		\end{bmatrix}\!\right)\!\!=\!n\!+\!m
		\end{equation*}
		holds for any $t \in \mathbb{N}$ and any $ (U_-(t),X(t))\in \mathfrak{B}_t $.
		\item[(ii)] $ p=m $ and $ B_{\rm m} $ has full column rank.
	\end{itemize}
\end{theorem}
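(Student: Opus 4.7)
I would prove the two directions separately, using throughout the observation that (ii) is equivalent to $\rank B_{\rm m}=m$ (since $\rank B_{\rm m}\le p\le m$, equality forces $p=m$ and full column rank).

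For (ii) $\Rightarrow$ (i), assume $p=m$ and $B_{\rm m}$ has full column rank, and suppose the image condition holds for some $(U_-(t),X(t))\in\mathfrak{B}_t$. By Lemma \ref{LDIMRC}, the data are informative for model reference control, so there exist gains $(K^*,L^*)$ such that every $(A,B)\in\Sigma_{(U_-(t),X(t))}$ satisfies $A+BK^*=A_{\rm m}$ and $BL^*=B_{\rm m}$. Applying this to the true plant gives $B_{\rm s}L^*=B_{\rm m}$. Since $L^*\in\mathbb{R}^{m\times m}$ and $\rank B_{\rm m}=m$, $L^*$ is invertible, so $B=B_{\rm m}(L^*)^{-1}$ and $A=A_{\rm m}-BK^*$ are uniquely determined for every compatible $(A,B)$. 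Hence $\Sigma_{(U_-(t),X(t))}=\{(A_{\rm s},B_{\rm s})\}$, and Lemma \ref{LDISI} yields \eqref{frc}.

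For (i) $\Rightarrow$ (ii) I would argue by contrapositive: assume $\rank B_{\rm m}<m$ and exhibit a single trajectory for which the image condition holds while \eqref{frc} fails. The main ingredient is a matching-gain $L_0$ with $\rank L_0<m$. I would build it via a rank factorization $B_{\rm m}=\bar B_{\rm m}C$ with $\bar B_{\rm m}\in\mathbb{R}^{n\times r}$ of full column rank and $C\in\mathbb{R}^{r\times p}$, where $r:=\rank B_{\rm m}$. Solvability of the matching equations gives $\im B_{\rm m}\subseteq\im B_{\rm s}$, hence $\im\bar B_{\rm m}\subseteq\im B_{\rm s}$, so some $\bar L\in\mathbb{R}^{m\times r}$ satisfies $B_{\rm s}\bar L=\bar B_{\rm m}$. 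Setting $L_0:=\bar L C$ yields $B_{\rm s}L_0=B_{\rm m}$ and $\rank L_0\le r<m$. I would pair $L_0$ with any $K_0$ satisfying $A_{\rm s}+B_{\rm s}K_0=A_{\rm m}$.

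Driving the system by $u(t)=K_0x(t)+L_0r(t)$ from an arbitrary $x(0)$ with a reference $r$ that is persistently exciting of order $n+1$ produces a closed loop satisfying $x(t+1)=A_{\rm m}x(t)+B_{\rm m}r(t)$. By Lemma \ref{LW} applied to the controllable pair $(A_{\rm m},B_{\rm m})$, for sufficiently large $t$ one has $\rank\begin{bmatrix}X_-(t)\\R(t)\end{bmatrix}=n+p$, where $R(t):=[r(0),\ldots,r(t-1)]$. The factorizations
\begin{equation*}
\begin{bmatrix}X_-(t)\\X_+(t)\end{bmatrix}=\begin{bmatrix}I&0\\A_{\rm m}&B_{\rm m}\end{bmatrix}\!\begin{bmatrix}X_-(t)\\R(t)\end{bmatrix},\ \begin{bmatrix}X_-(t)\\U_-(t)\end{bmatrix}=\begin{bmatrix}I&0\\K_0&L_0\end{bmatrix}\!\begin{bmatrix}X_-(t)\\R(t)\end{bmatrix}
\end{equation*}
then give $\im\begin{bmatrix}X_-(t)\\X_+(t)\end{bmatrix}=\im\begin{bmatrix}I&0\\A_{\rm m}&B_{\rm m}\end{bmatrix}$ (so the image condition holds) and $\rank\begin{bmatrix}X_-(t)\\U_-(t)\end{bmatrix}\le n+\rank L_0<n+m$ (so \eqref{frc} fails), which refutes (i).

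The step I expect to require the most care is the construction of $L_0$ with $\rank L_0<m$ when $p=m$ and $B_{\rm m}$ is rank-deficient: an arbitrary matching $L_0$ need not be rank-deficient, but the rank-decomposition recipe above always delivers one by explicitly exploiting the inclusion $\im B_{\rm m}\subseteq\im B_{\rm s}$ forced by solvability of the matching equations.
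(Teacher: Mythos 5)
Your proposal is correct. The direction (i)$\Rightarrow$(ii) is essentially the paper's argument in contrapositive form: the paper also manufactures a rank-deficient matching gain (it takes $L=B_{\rm s}^\dagger B_{\rm m}$, so $\rank L\leq\rank B_{\rm m}<m$, instead of your rank factorization $L_0=\bar L C$ --- both work for the same reason, namely $\im B_{\rm m}\subseteq\im B_{\rm s}$), and it also uses a persistently exciting reference together with Lemma \ref{LW} applied to $(A_{\rm m},B_{\rm m})$; the only cosmetic difference is that the paper writes the data as a reference-model trajectory with input $U_-(t)=KX^{\rm m}_-(t)+LR_-(t)$ and then checks it lies in $\mathfrak{B}_t$, whereas you obtain the identical data by running the plant in closed loop with $(K_0,L_0)$. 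Where you genuinely diverge is (ii)$\Rightarrow$(i): you route the argument through informativity for system identification --- invertibility of $L^*$ (forced by $B_{\rm s}L^*=B_{\rm m}$ with $\rank B_{\rm m}=m=p$) pins down every $(A,B)\in\Sigma_{(U_-(t),X(t))}$, so $\Sigma$ is the singleton $\{(A_{\rm s},B_{\rm s})\}$ and Lemma \ref{LDISI} gives \eqref{frc} --- while the paper argues directly from the factorization
\begin{equation*}
\begin{bmatrix} I&0\\ A_{\rm m}&B_{\rm m}\end{bmatrix}
=\begin{bmatrix} I&0\\ A_{\rm s}&B_{\rm s}\end{bmatrix}
\begin{bmatrix} X_-(t)\\ U_-(t)\end{bmatrix}\begin{bmatrix} V_1&V_2\end{bmatrix},
\end{equation*}
which sandwiches $\rank\bigl[\begin{smallmatrix}X_-(t)\\ U_-(t)\end{smallmatrix}\bigr]$ between $n+p$ and $n+m$. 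Your version buys a slightly stronger intermediate conclusion (the data are informative for identification, not merely full-rank) at the cost of invoking Lemmas \ref{LDIMRC} and \ref{LDISI}; the paper's version is a two-line rank count. Both are valid, and your ``sufficiently large $t$'' caveat for persistent excitation of order $n+1$ matches the paper's implicit requirement.
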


\begin{proof}
%	We first show the existence of restricted trajectories $ (U_-(t),X(t))\in\mathfrak{B}_t $ satisfying \eqref{image}.
%	Since we can always apply an input $U_-(t)$ that is persistently exciting of order $ n+ 1 $, there exists $ (U_-(t),X(t))\in\mathfrak{B}_t $ satisfying \eqref{frc} by lemma \ref{LW}.
%	By Proposition \ref{TMC}, the data $(U_-(t),X(t))$ are informative for model reference control, so	\eqref{image} holds.
%	We now proceed by showing the equivalence of statements (i) and (ii).
	\underline{(i)$ \Rightarrow $(ii):}\\
	% Let $ (U_-(t),X(t)) $ be any set of data satisfying \eqref{image}, and thus \eqref{frc}.
	First, we claim that $ \rank(B_{\rm m})=m $.
	Suppose on the contrary that $\rank (B_{\rm m}) \neq m$. 
	Then, since $p \leq m$, we have that $\rank (B_m) < m$.
	Let $(K,L)$ be a solution of the matching equations \eqref{mc}. 
	We may assume without loss of generality that $\rank(L) < m$. 
	Indeed, by hypothesis, $\im B_{\rm m} \subseteq \im B_{\rm s}$ and thus $L := B_{\rm s}^\dagger B_{\rm m}$ satisfies the equation $B_{\rm s} L = B_{\rm m}$, where $B_{\rm s}^\dagger$ denotes the Moore-Penrose pseudoinverse of $B_{\rm s}$. 
	Note that the matrix $L$ satisfies $\rank (L) \leq \rank (B_{\rm m}) < m$.
	\\
	\indent
	Consider a set of data generated by the reference model \eqref{r}, which we denote by
	\begin{equation}\label{rd}
	\begin{aligned}
	R_-(t):=&\begin{bmatrix}
	r(0)&r(1)&\cdots&r(t-1)
	\end{bmatrix},\\
	X^{\rm m}(t):=&\begin{bmatrix}
	x_{\rm m}(0)&x_{\rm m}(1)&\cdots&x_{\rm m}(t)
	\end{bmatrix},\\
	X_-^{\rm m}(t):=&\begin{bmatrix}
	x_{\rm m}(0)&x_{\rm m}(1)&\cdots&x_{\rm m}(t-1)
	\end{bmatrix},\\
	X_+^{\rm m}(t):=&\begin{bmatrix}
	x_{\rm m}(1)&x_{\rm m}(2)&\cdots&x_{\rm m}(t)
	\end{bmatrix},\\
	\end{aligned}
	\end{equation}
	with $ t\in \mathbb{N} $ and $ R_-(t) $ persistently exciting of order $ n+1 $.	
	We have $$ X_+^{\rm m}(t)=A_{\rm m}X_-^{\rm m}(t)+B_{\rm m}R_-(t), $$ implying that
	\begin{equation}\label{m}
	\begin{bmatrix}
	X_-^{\rm m}(t) \\ X_+^{\rm m}(t)
	\end{bmatrix}
	=\begin{bmatrix}
	I&0\\A_{\rm m}&B_{\rm m}
	\end{bmatrix}
	\begin{bmatrix}
	X_-^{\rm m}(t)\\R_-(t)
	\end{bmatrix}.
	\end{equation}
	Because $ (A_{\rm m},B_{\rm m}) $ is controllable, it follows from Lemma \ref{LW} that
	\begin{equation*}
	\rank\left(\begin{bmatrix}
	X_-^{\rm m}(t)\\R_-(t)
	\end{bmatrix}\right)=n+p.
	\end{equation*}
	Therefore, \eqref{m} implies that
	\begin{equation}\label{xm}
	\im \begin{bmatrix}
	X_-^{\rm m}(t) \\ X_+^{\rm m}(t)
	\end{bmatrix}
	=
	\im \begin{bmatrix}
	I&0\\A_{\rm m}&B_{\rm m}
	\end{bmatrix}.
	\end{equation}
	Since $ (K,L) $ is a solution of the matching equations \eqref{mc}, the equation \eqref{m} implies that
	\begin{equation}
	\begin{bmatrix}
	X_-^{\rm m}(t) \\ X_+^{\rm m}(t)
	\end{bmatrix}
	=
	\begin{bmatrix}
	I&0\\A_{\rm s}&B_{\rm s}
	\end{bmatrix}
	\begin{bmatrix}
	I&0\\K&L
	\end{bmatrix}
	\begin{bmatrix}
	X_-^{\rm m}(t)\\R_-(t)
	\end{bmatrix}.
	\end{equation}
	By defining the input data
	$ U_-(t)=KX_-^{\rm m}(t)+LR_-(t) $,
	we obtain
	\begin{equation}
	X_+^{\rm m}(t)=A_{\rm s}X_-^{\rm m}(t)+B_{\rm s}U_-(t),
	\end{equation}
	implying that $(U_-(t),X^{\rm m}(t)) \in \mathfrak{B}_t$, i.e., the data $ (U_-(t),X^{\rm m}(t)) $ can be generated by the system $ (A_{\rm s},B_{\rm s}) $.
	However, since $ \rank (L) < m$ we have
	\begin{equation*}
	\rank\left(\begin{bmatrix}
	I&0\\K&L
	\end{bmatrix}\right)<n+m,
	\end{equation*}
	and therefore
	\begin{equation}\label{xmu}
	\rank\left(\begin{bmatrix}
	X_-^{\rm m}(t)\\U_-(t)
	\end{bmatrix}\right)<n+m.
	\end{equation}
	By \eqref{xm}, we have that \eqref{image}, but not \eqref{frc}, holds for the data $ (U_-(t),X^{\rm m}(t)) $, resulting in a contradiction.
	We conclude that $ \rank(B_{\rm m})=m $.
	Since $ \rank(B_{\rm m})\leq p \leq m $, we have that $ p=m $ and $ B_{\rm m} $ has full column rank.
		
	\underline{(ii)$ \Rightarrow $(i):}\\
	Suppose that $ p=m $ and $ B_{\rm m} $ has full column rank.
	Consider a trajectory $ (U_-(t),X(t))\in\mathfrak{B}_t $ satisfying \eqref{image}.
	Then, there exist matrices $ V_1\in\mathbb{R}^{t\times n} $ and $ V_2\in\mathbb{R}^{t\times p} $ such that
	\begin{equation*}
	\begin{aligned}
	\begin{bmatrix}
	I&0\\A_{\rm m}&B_{\rm m}
	\end{bmatrix}
	=&
	\begin{bmatrix}
	X_-(t) \\ X_+(t)
	\end{bmatrix}
	\begin{bmatrix}
	V_1&V_2
	\end{bmatrix}\\
	=&
	\begin{bmatrix}
	I&0\\A_{\rm s}&B_{\rm s}
	\end{bmatrix}
	\begin{bmatrix}
	X_-(t) \\ U_-(t)
	\end{bmatrix}
	\begin{bmatrix}
	V_1&V_2
	\end{bmatrix}.
	\end{aligned}
	\end{equation*}
	We have
	\begin{equation*}
	n+p= \rank \left(\begin{bmatrix}
	I&0\\A_{\rm m}&B_{\rm m}
	\end{bmatrix}\right) \leq \rank \left(\begin{bmatrix}
	X_-(t)\\U_-(t)
	\end{bmatrix}\right) \leq n+m,
	\end{equation*}
	implying that \eqref{frc} holds.
\end{proof}
	
	Theorem \ref{TBm} % offers insightful views on existing solutions to the model reference control problem.
	establishes the special cases when unique system identification becomes necessary to solve the data-driven model reference control problem, which can be verified by simply checking the size and rank of $ B_{\rm m} $.
	% Thus, in such a scenario, MRAC can still be achieved with data with weaker restrictions.
	We refer the reader to the discussion in Section \ref{excitation} on how Theorem \ref{TBm} offers insightful views on existing solutions to the MRAC problem.

	The weaker requirements on collected data of the inclusion \eqref{image} as compared to the full rank condition \eqref{frc} motivate us to develop a new MRAC approach that only exploits data informativity for model reference control, \emph{without} relying on data informativity for system identification.
	This solution will be the subject of the next section.
	% Lemma \ref{LDIMRC} does not provide any constructive method to solve the model reference control in an adaptive way, that is, by estimating online the control gains $K$ and $L$. 

\section{MRAC with Online Informative Data}

To solve Problem~\ref{P}, we first present necessary and sufficient conditions for the existence of an adaptive law that guarantees asymptotic convergence of $\hat{K}(t)$ and $\hat{L}(t)$ in \eqref{controller} to some (possibly non-unique) solution of the matching equations \eqref{mc} (Section~V-A).
We then develop an input design only relying on data informativity for model reference control \mbox{(Section~V-B)}. 
Next, based on this input design, we establish convergence of MRAC to a solution of the matching equations (Section~V-C).
Finally, stability and boundedness for the closed-loop signals are studied (Section V-D) and discussions on existing MRAC solutions are given (Section V-E).

\subsection{Necessary and sufficient conditions for existence}

We aim to address the first part of Problem~\ref{P}, that is, establish necessary and sufficient conditions for the existence of an adaptive law and associated adaptive gains such that \eqref{converge} holds.
Existence is analyzed from two perspectives: a data-driven perspective based on the data generated by the system, and a model-based perspective based on the system matrices.
%	From these perspectives, the following result establishes the desired conditions.
	
%	The following result establishes such conditions and its proof will be organized as follows: 
%	Proving the implication (i)$\Rightarrow$(ii) provides a data-driven perspective on the existence based on the data generated by the system.
%Next, proving the implication	(ii)$\Rightarrow$(iii) provides a model-based perspective on the existence based on the system matrices. 
%Finally, the implication (iii)$\Rightarrow$(i) is what allows the conditions in the theorem to become necessary and sufficient. 
%Since a recipe will be given in Section~\ref{Sdesign} to design the adaptive law and associated adaptive gains in (i), the proof of the implication (iii)$\Rightarrow$(i) is postponed to Section~\ref{Sconverge} (cf. Theorem~\ref{TMRAC}).

\begin{theorem}\label{Texist}
	The following statements are equivalent:
	\begin{itemize}
		\item[(i)] There exist an adaptive law as in \eqref{pTheta} and associated adaptive gains as in \eqref{hatKL} such that \eqref{converge} holds;
		\item[(ii)] There exist $ t\in\mathbb{N} $ and $ (U_-(t),X(t))\in\mathfrak{B}_t $ such that $ (U_-(t),X(t)) $ are informative for model reference control;
		\item[(iii)] There exist $K \in \mathbb{R}^{m \times n} $ and $ L \in \mathbb{R}^{m \times p}$ satisfying the matching equations \eqref{mc}.
	\end{itemize}
\end{theorem}

%In what follows, we will prove the implications (i)$\Rightarrow$(ii) and (ii)$\Rightarrow$(iii).
%The proof of the implication (iii)$\Rightarrow$(i) is postponed to Section~\ref{Sconverge} (see Theorem~\ref{TMRAC}).
%There are two perspectives from which this existence analysis can be carried out, corresponding to two general approaches to control design. 
%The first approach studies properties of the model, in line with model-based theory, while the second focuses on the data generated by the system, consistent with data-driven theory. 
%Following this perspective, we present the solution below, where we only prove the result partially since the remained part of the proof relies on results developed in subsequent subsections of this section.

\begin{proof}
	\underline{(i)$\Rightarrow$(ii)}\\
	Suppose there exist an adaptive law as in \eqref{pTheta} and associated adaptive gains as in \eqref{hatKL} such that \eqref{converge} holds.
	Then, $$ \lim_{t\to\infty} B_{\rm s}\hat{K}(t) = A_{\rm m}-A_{\rm s} \text{ and } \lim_{t\to\infty} B_{\rm s}\hat{L}(t) = B_{\rm m}. $$
	Since for all $ t\in\mathbb{N} $, $$ \im B_{\rm s}\hat{K}(t)\subseteq\im B_{\rm s} \text{ and } \im B_{\rm s}\hat{L}(t)\subseteq\im  B_{\rm s} $$ 
	and $ \im B_{\rm s} $ is closed, we have $$ \im (A_{\rm m}-A_{\rm s})\subseteq \im B_{\rm s} \text{ and } \im B_{\rm m}\subseteq\im  B_{\rm s}. $$
	Therefore, there exists a pair $ (K,L) $ such that $ A_{\rm m}-A_{\rm s} = B_{\rm s}K $ and $ B_{\rm m} = B_{\rm s}L $, implying that \eqref{mc} holds.
	Next, we consider a set of data \eqref{rd} generated by the reference model \eqref{r} with $ R_-(t) $ persistently exciting of order $ n+ 1 $.
	Let \mbox{$ U_-(t)=KX_-^m(t)+LR_-(t) $}.
	Along similar lines as the proof (i)$ \Rightarrow $(ii) of Theorem \ref{TBm}, we have that $ (U_-(t),X^{\rm m}(t))\in\mathfrak{B}_t $ satisfy \eqref{xm}.
	Then, Lemma \ref{LDIMRC} implies statement (ii).
	
	\underline{(ii)$\Rightarrow$(iii)}\\
	Suppose there exist $ t\in\mathbb{N} $ and $ (U_-(t),X(t))\in\mathfrak{B}_t $ such that $ (U_-(t),X(t)) $ are informative for model reference control.
	Then, there exist $ V_1 $ and $ V_2 $ such that \eqref{V12} holds.
	Let \mbox{$ K=U_-V_1 $} and $ L=U_-V_2 $, then
	\begin{align*}
	A+BK&=AX_-V_1+BU_-V_1=X_+V_1=A_{\rm m},\\
	BL&=AX_-V_2+BU_-V_2=X_+V_2=B_{\rm m},
	\end{align*}
	implying statement (iii).
	
	\underline{(iii)$\Rightarrow$(i)}\\
	This implication follows from Theorem \ref{TMRAC} in Section \ref{Sconverge}.
%	Suppose that there exist $ t\in\mathbb{N} $ and $ (U_-(t),X(t))\in\mathfrak{B}_t $ such that (11) holds.
%	Then, by Lemma 3 and Definition 4 (in Section \ref{Sdesign}), there exists $ T^*\leq t $. 
%	According to Theorem \ref{TMRAC} (in Section \ref{Sconverge}), by designing the adaptive law as in \eqref{Theta} the feedback gain as in \eqref{gain}, the input in Lemma \ref{LT} (in Section \ref{Sdesign}) are such that \eqref{converge} holds.
\end{proof}

%Specifically, such an adaptive law and feedback gains exist if and only if the matching equations \eqref{mc} hold for the system and the reference model, or the system can generate the informative data specified in \eqref{image}.
%To further demonstrate how (iii) implies (i) in Theorem~\ref{Texist}, the next crucial step is the design of the adaptive law in \eqref{pTheta} and the feedback gains in \eqref{hatKL}, which will be presented in the following subsections.

\subsection{Design of  the adaptive law and the input}
\label{Sdesign}
	
%	Instead of relying on a set of offline data directly, we aim to collect data online, leading to the generation of online data $ (U_-(k),X(k)) $ at time instant $ t=1,2,\dots $ with the following notation
%	\begin{equation}\label{dk}
%	\begin{aligned}
%	U_-(k)&=\begin{bmatrix}
%	u(0)&u(1)&\cdots&u(k-1)
%	\end{bmatrix},\\
%	X_-(k)&=\begin{bmatrix}
%	x(0)&x(1)&\cdots&x(k-1)
%	\end{bmatrix},\\
%	X_+(k)&=\begin{bmatrix}
%	x(1)&x(2)&\cdots&x(k)
%	\end{bmatrix}.\\
%	\end{aligned}
%	\end{equation}
	%, where we regard $ T^* $ as the first time when sufficient information is obtained for MRAC.
	We now provide a recipe for constructing $ i_t,j_t,\alpha_t,\beta_t $ and $ v $ in Problem~\ref{P}.
%	From the established part of Theorem~\ref{Texist}, condition (iii) is necessary to guarantee the existence of such a construction.
%	Hence, we assume that condition (iii) in Theorem~\ref{Texist} holds in this subsection.
%	
	To achieve online adaptive control in the data informativity framework, it is important to find the first time instant $ t $ at which the data $ (U_-(t),X(t))\in\mathfrak{B}_t $ are informative for model reference control.
	This motivates the following definition.
	\begin{definition} \label{DIT}
		Consider the input-state trajectory $ (u,x) \!\in\! \mathfrak{B} $.
		Then,
		\begin{equation}\label{T^*}
		\begin{aligned}
		T^*:=\min\{t\mid\, &(U_-(t),X(t))\in\mathfrak{B}_t \text{ are informative}\\
		&\text{ for model reference control} \}.
		\end{aligned}
		\end{equation}
	By convention, $ T^*=\infty $ if there does not exist a $ t \in \mathbb{N} $ for which $ (U_-(t),X(t)) $ are informative for model reference control.
	\end{definition}

%	By Lemma \ref{LDIMRC}, it is obvious that condition (iii) in Theorem~\ref{Texist} holds if and only if $ T^* $ is finite.
	We will now show that if the trajectory $ (u,x) \in \mathfrak{B} $ is generated online via an appropriate estimation and control mechanism, then the associated $T^*\leq n+m$ if and only if the matching equations \eqref{mc} have a solution.
	The estimation and control mechanism will make use of the data matrices
	%\begin{equation}
	%\Phi_{U_-}(t)\in\mathbb{R}^{m\times\cdot},
	%\Phi_X(t)\in\mathbb{R}^{2n\times\cdot},
	%\end{equation} 
	%where 
	\begin{equation}
	\label{PhiU}
	\Phi_{U_-}\!(t)\!=\!\left\{
	\begin{aligned}
	& U_-(t) &&\text{if } 0<t\leq T^*+1 \\
	& 
	\setlength{\arraycolsep}{2pt}
	\!\! \begin{bmatrix}
	U_-(T^*)&u(t-1)
	\end{bmatrix}\! \!\, &&\text{if } t\!>\!T^*\!+\!1\, \text{and} \, |x(t)|\leq\sigma \\
	&\Phi_{U_-}(t-1) &&\text{otherwise},
	\end{aligned}
	\right.
	\end{equation}
	\begin{align}
	\nonumber
	\Phi_{X_-}\!(t)&\!=\!\left\{
	\begin{aligned}
	& X_-(t) &&\text{if } 0<t\leq T^*+1 \\
	& 
	\setlength{\arraycolsep}{2pt}
	\!\! \begin{bmatrix}
	X_-(T^*)&x(t-1)\\
	\end{bmatrix}\! \! &&\text{if } t\!>\!T^*\!+\!1\, \text{and} \, |x(t)|\leq\sigma \\
	& \Phi_{X_-}(t-1) &&\text{otherwise},
	\end{aligned}
	\right.\\ \nonumber
	\Phi_{X_+}\!(t)&\!=\!\left\{
	\begin{aligned}
	& X_+(t) &&\text{if } 0<t\leq T^*+1 \\
	& 
	\setlength{\arraycolsep}{2pt}
	\!\! \begin{bmatrix}
	X_+(T^*)&x(t)\\
	\end{bmatrix}\! \qquad \!\! \!\!\  &&\text{if } t\!>\!T^*\!+\!1\, \text{and} \, |x(t)|\leq\sigma \\
	& \Phi_{X_+}(t-1) &&\text{otherwise}, 
	\end{aligned}
	\right.\\
	\label{PhiX}
	\Phi_X(t)&\!=\! \begin{bmatrix}
	\Phi_{X_-}(t) \\ \Phi_{X_+}(t)
	\end{bmatrix}\!,
	\end{align}
	where $ \sigma\! \in\! \mathbb{R} $ is a given upper bound on the norm of the collected state data.
	The use of data matrices with past and current data is inspired by the concurrent learning approach \cite{chowdhary2010concurrent}, where past and current data are concurrently used to update the control gains.
	Note that to construct the data matrices in \eqref{PhiU} and \eqref{PhiX}, we need to verify at every time $t$ whether $t \leq T^* + 1$. 
	We emphasize that, even though the exact value of $T^*$ may be unknown at time $t$, one can verify whether $t \leq T^* + 1$ via the linear algebraic condition (11).
	
	Moreover, we consider a matrix $ \Theta(t)\in \mathbb{R}^{i_t\times(n+p)} $, where 
	\begin{equation}\label{it}
	i_t=\left\{
	\begin{aligned}
	& 1 &&\text{if } t=0 \\
	& t &&\text{if } 0<t\leq T^* \\
	& T^*+1 &&\text{if } t>T^*.
	\end{aligned}
	\right.
	\end{equation}
	We let $ \Theta(0)=\Theta(1) \in \mathbb{R}^{1\times(n+p)} $ be arbitrary and consider the following adaptive update law for $ \Theta $:
	\begin{equation}\label{Theta}
	\Theta(t+1)=\left\{
	\begin{aligned}
	& \begin{bmatrix}
	\Theta(t)-\gamma\Phi_X(t)^\top\Delta(t) \\0
	\end{bmatrix}
	&&\text{if } 0<t\leq T^*\\
	& \ \Theta(t)
	-\gamma\Phi_X(t)^\top\Delta(t) &&\text{if } t>T^*,
	\end{aligned}
	\right.
	\end{equation}
	where  $ 0<\gamma<2 $  and
	\begin{equation}\label{Delta}
	\begin{aligned}
	&\Delta(t)=\\
	&\left\{
	\begin{aligned}
	& \Phi_X(t)\Theta(t)-
	\setlength{\arraycolsep}{2pt}
	\begin{bmatrix}
	I&0\\A_{\rm m}&B_{\rm m}
	\end{bmatrix}
	&&\text{if } 0<t\leq T^* \\
	& \frac{1}{\|\Phi_X(t)\|_F^2}
	\left(\Phi_X(t)\Theta(t)-
	\setlength{\arraycolsep}{3pt}
	\begin{bmatrix}
	I&0\\A_{\rm m}&B_{\rm m}
	\end{bmatrix}\right) &&\text{if } t>T^*.
	\end{aligned}
	\right.
	\end{aligned}
	\end{equation}
Note that $ \|\Phi_X(t)\|_F^2 \neq 0 $  for all $ t> T^* $.
	
%With the adaptive law in \eqref{Theta}, we propose the following gains:
%\begin{equation}\label{gain}
%\begin{bmatrix}
%\hat{K}(t)&\hat{L}(t)
%\end{bmatrix}=\Phi_{U_-}(t)\Theta(t)
%\end{equation}
%The idea is to apply a control input that switches between the adaptive input and an exciting input.
%%which will be formally introduced in the next lemma.
Given the adaptive law in \eqref{Theta}, we introduce the control input formally in the next lemma, where we also clarify the relation between $T^*$ and the existence of solutions of the matching equations \eqref{mc}.
	
\begin{lemma}\label{LT}
	Let $ u(0)\in \mathbb{R}^m $  be nonzero and $ x(0)\in \mathbb{R}^n $.
	For $t \in \mathbb{N}$, design the input $ u(t) $ as follows:
	\begin{itemize}
	\item If 
	\begin{itemize}
		\item[i)] $ t<n+m $,
			\begin{equation}\label{ua}
			\!\!\!\!\!\!\!\!\!\!\;
		\text{ii)} \
			\begin{bmatrix}
			[I\qquad0]\\\Phi_{U_-}(t)\Theta(t)
			\end{bmatrix}
			\begin{bmatrix}
			x(t)\\r(t)
			\end{bmatrix}\in
			\im	\begin{bmatrix}
			\Phi_{X_-}(t)\\\Phi_{U_-}(t)
			\end{bmatrix}\!, \text{ and }
			\end{equation}
		\item[iii)] the data $ (U_-(t),X(t)) $ are not informative for model reference control,
	\end{itemize}
	then there exists a $ w \in \mathbb{R}^m $ such that
	\begin{equation}\label{xi}
		\rank\left(\begin{bmatrix}
		\Phi_{X_-}(t)&x(t) \\
		\Phi_{U_-}(t)&w
		\end{bmatrix}\right)=
		\rank\left(\begin{bmatrix}
		\Phi_{X_-}(t)\\\Phi_{U_-}(t)
		\end{bmatrix}\right)+1.
	\end{equation}
	In this case, select $ u(t) $ as in \eqref{controller} with 
	\begin{equation}\label{xib}
		\hat{K}(t)=0, \hat{L}(t)=0 \text{ and } v(t)=w.
	\end{equation}
	\item Otherwise, select $ u(t) $ as in \eqref{controller} with 
	\begin{equation}\label{gain}
		\begin{bmatrix}
		\hat{K}(t)&\hat{L}(t)
		\end{bmatrix}=\Phi_{U_-}(t)\Theta(t) \text{ and } v(t)=0.
	\end{equation}
	\end{itemize}
\vspace{-3.2ex}
	%For any $ t<n+m $, there exists a $\xi$ satisfying \eqref{xi}.
	% Then, the existence of a solution $(K,L)$ to the matching equations \eqref{mc} implies the existence of $\xi$ satisfying \eqref{xi} for $ t<T^* $.
	Consider the resulting input-state trajectory $ (u,x) \in \mathfrak{B} $.
	Then, $ T^*\leq n+m $ if and only if \eqref{mc} holds for some $ (K, L) $.
\end{lemma}
	
\begin{proof}
	We first show that if i), ii) and iii) hold, then there exists a $ w \in \mathbb{R}^m$ satisfying \eqref{xi}. 
	To do so, note that iii) implies that $t \leq T^* + 1$ so $\Phi_{U_-}(t) = U_-(t)$ and $\Phi_{X_-}(t) = X_-(t)$. 
	Moreover, ii) implies that $x(t) \in \im X_-(t)$. 
	Therefore, it follows from \cite[Theorem 1]{van2021beyond} 
%	and the subsequent discussion 
	that there exist a $ w $ such that \eqref{xi} holds.
	
	Now we show that $ T^*\leq n+m $ if and only if condition (iii) in Theorem~\ref{Texist} holds.
	
	For necessity, we suppose that $ T^*\leq n+m $.
	According to Definition \ref{DIT} and Lemma \ref{LDIMRC},  there exist gains $ (K,L) $ such that $ \Sigma_{(U_-(T^*),X(T^*))}\subseteq \Sigma^{K,L} $.
	Since $ (A_{\rm s},B_{\rm s}) \in \Sigma_{(U_-(T^*),X(T^*))} $, we have that \eqref{mc} holds.
	
	For sufficiency, we suppose on the contrary that $T^* > n+m$. 
	We will prove that
	\begin{equation}\label{nm}
	\rank \begin{bmatrix} X_-(n+m) \\ U_-(n+m) \end{bmatrix} = n+m.
	\end{equation}
	To do so, we note that 
	\begin{equation*}
	\rank \begin{bmatrix} X_-(1) \\ U_-(1) \end{bmatrix} = 1
	\end{equation*}
	because $u(0)$ is nonzero. 
	Next, we show that 
	\begin{equation}\label{rank+}
		\text{rank}\left(\begin{bmatrix}
	X_-(t'+1)\\U_-(t'+1)
	\end{bmatrix}\right)=
	\text{rank}\left(\begin{bmatrix}
	X_-(t')\\U_-(t')
	\end{bmatrix}\right)+1
	\end{equation}
	for all $t' = 1,2,...,n+m-1$. 
	Let $t' \in \{1,2,...,n+m-1\}$. 
	Since $t' \leq T^*+1$, the data $(U_-(t'),X(t'))$ are not informative for model reference control and
	\begin{equation*}
	\Phi_{U_-(t')} = U_-(t') \text{ and } \Phi_{X_-(t')} = X_-(t').
	\end{equation*}
	Now, if \eqref{ua} holds for $ t=t' $,
	then \eqref{rank+} follows from selecting $u(t')$ as in \eqref{xib} for $ t=t' $.
	Otherwise, if \eqref{ua} does not hold for $ t=t' $, then \eqref{rank+} follows from selecting $u(t')$ as in \eqref{gain} for $ t=t' $.
	This proves that \eqref{nm} holds. 
	However, by Lemma \ref{LDISI} and Proposition \ref{TMC}, we see that $(U_-(n+m),X_-(n+m))$ are informative for model reference control. 
	This contradicts the definition of $T^*$. Therefore, $T^* \leq n+m$.
	\end{proof}

Lemma \ref{LT} demonstrates that, with the specific choice of the input, either $ T^*\leq n+m $  or $ T^*=\infty $.
In the latter case, there are no gains $ (K, L) $ such that \eqref{mc} holds.
Therefore, Lemma \ref{LT} provides an effective method to check whether the matching equations \eqref{mc} have a solution.
Indeed, $ T^*=\infty $ if \eqref{image} does not hold for $ t=n+m $.
In other words, $ T^* $ can always be determined after collecting at most $ n+m $ data samples.

\begin{remark}
	(Determining $T^*$).
	To determine $T^*$, one needs to verify the linear algebraic condition \eqref{image}. 
	Note that \eqref{image} is equivalent to
	\begin{equation}\label{rank=}
	\rank\left(\begin{bmatrix} X_-(t) \\ X_+(t) \end{bmatrix}\right)= \rank\left(\begin{bmatrix} X_-(t)&I&0\\X_+(t)&A_{\rm m}&B_{\rm m} \end{bmatrix}\right),
	\end{equation}
	which provides a computationally efficient way to verify \eqref{image}.
	Furthermore, \eqref{rank=} implies that $ n+\rank(B_{\rm m})\leq T^*\leq n+m $, so it suffices to check \eqref{rank=} only for $t \geq n+\rank(B_{\rm m})$.
\end{remark}

	\begin{remark} (Design of the input).
	Following the approach in \cite[Theorem 1]{van2021beyond}, one way to design $w$ is as follows. 
	First, find \mbox{$ \xi\in \mathbb{R}^n $}, $\eta\in \mathbb{R}^m\setminus \{0\} $ such that 
	\begin{equation}\label{v}
	\begin{bmatrix} \xi^\top  & \eta^\top \end{bmatrix} \begin{bmatrix} \Phi_{X_-}(t) \\ \Phi_{U_-}(t) \end{bmatrix} = 0.
	\end{equation}
	Then, choose $w$ such that 
	\begin{equation}\label{xieta}
	\xi^\top x(t) + \eta^\top w \neq 0.
	\end{equation}
%	The proof  of Lemma \ref{LT} shows that $ \xi,\eta $ always exist when $ 0<t<n+m $.
\end{remark}

\vspace{1ex}
\begin{remark} (Design of the parameters).
	In \eqref{PhiU} and \eqref{PhiX}, $ \sigma $ can be designed, for instance, as an upper bound of $ x_{\rm m} $, which can be determined since the reference model $ A_{\rm m}, B_{\rm m} $ and the reference input $r \in \ell_\infty^p$ are given.
	In \eqref{Theta}, the gain has been taken as a scalar $ \gamma \in (0,2) $, for simplicity\footnote{
		Requirements analogous to $ \gamma \in (0, 2) $ are common in discrete-time gradient-based adaptation \cite[Lemma 4.1.1]{ioannou2006adaptive}. 
		It will be shown in the proof of Theorem \ref{TMRAC} that this choice of $ \gamma $ also ensures the existence of a limit of the Lyapunov function, which in turn guarantees \eqref{converge}.
	}.
	However, the scalar $\gamma$ can be generalized to a matrix gain \mbox{$\Gamma= \Gamma^\top$} satisfying $ 0 < \Gamma < 2I $, which allows different adaptation rates along different directions \cite{ioannou2006adaptive}.
\end{remark}

\subsection {Convergence to a matching solution}
\label{Sconverge}

We are now in a position to  formulate the convergence properties for the proposed recipe.
%We note that the next result proves the implication (iii)$ \Rightarrow $(i) in Theorem~\ref{Texist}.

\begin{theorem}\label{TMRAC}
	If the matching equations \eqref{mc} have a solution, the adaptive gains designed as in \eqref{xib}-\eqref{gain} are such that \eqref{converge} holds.
\end{theorem}
	
\begin{proof}
	By Lemma \ref{LT}, we have $ T^*\leq n+m $.
	Then, by definition of $ \Phi_X(t) $, there exists a matrix $ \Theta^* \in \mathbb{R}^{(T^*+1)\times(n+p)} $ such that
	\begin{equation}\label{Theta*}
		 \Phi_X(t)\Theta^*=\begin{bmatrix}
		I&0\\A_{\rm m}&B_{\rm m}
		\end{bmatrix}
	\end{equation} 
	for all $ t > T^* $.
	Let $ t>T^* $ and denote $ \tilde{\Theta}(t)=\Theta(t)-\Theta^* $.
	According to \eqref{Theta}, we obtain the dynamics
	\begin{equation}\label{Thetat}
		\tilde{\Theta}(t+1)=\tilde{\Theta}(t)-\gamma\Phi_X(t)^\top\Delta(t).
	\end{equation}
	We now proceed to define the candidate Lyapunov function $ V:\mathbb{R}^{(T^*+1)\times(n+p)}\to\mathbb{R}_+ $ as \begin{equation}\label{lf}
			V(\tilde{\Theta})=\frac{1}{2}\tr\left(\tilde{\Theta}^\top\tilde{\Theta}\right).
	\end{equation}
	Note that $ V(\tilde{\Theta})\geq0 $ and $ V(\tilde{\Theta})=0 \Leftrightarrow \tilde{\Theta}=0 $.
	The rate of change of the Lyapunov function along the trajectories of \eqref{Thetat} is given by:
	\begin{align*}
		&V(\tilde{\Theta}(t+1))-V(\tilde{\Theta}(t))\\
		=&\frac{1}{2}\tr\left(\tilde{\Theta}(t+1)^\top\tilde{\Theta}(t+1)\right)
		-
		\frac{1}{2}\tr\left(\tilde{\Theta}(t)^\top\tilde{\Theta}(t)\right)\\
		=&-\gamma\tr\left(\tilde{\Theta}(t)^\top\Phi_X(t)^\top\Delta(t)\right)\\
		&+	\frac{\gamma^2}{2}\tr\left(\Delta(t)^\top\Phi_X(t)\Phi_X(t)^\top\Delta(t)\right)\\
		=&-\gamma\|\Phi_X(t)\|_F^2 \cdot \tr\left(\Delta(t)^\top\Delta(t)\right) \\
		&+\frac{\gamma^2}{2}\tr\left(\Delta(t)^\top\Phi_X(t)\Phi_X(t)^\top\Delta(t)\right)\\
		=&-\|\Phi_X(t)\|_F^2
		\cdot\tr\left(\Delta(t)^\top
		\left(\gamma I-\frac{\gamma^2\Phi_X(t)			\Phi_X(t)^\top}{2\|\Phi_X(t)\|_F^2}
		\right)
		\Delta(t)\right)\!.
	\end{align*}
	Since $ 0 < \gamma < 2 $, there exists $ \varepsilon>0 $ such that $\gamma - \frac{\gamma^2}{2} \geq \varepsilon$.
	We claim that
	\begin{equation}\label{pd}
		\gamma I-\frac{\gamma^2\Phi_X(t)\Phi_X(t)^\top}{2\|\Phi_X(t)\|_F^2}\geq\varepsilon I.
	\end{equation}
	Since $ \|\Phi_X(t)\|_F\geq\|\Phi_X(t)\|_2 $, we have that for any $ v\in\mathbb{R}^{2n} $,
\begin{align*}
	v^\top\!\left(\gamma I-\frac{\gamma^2\Phi_X(t)	\Phi_X(t)^\top}{2\|\Phi_X(t)\|_F^2}\right)v
	& \geq \gamma|v|^2-\frac{\gamma^2\|\Phi_X(t)\|_2^2}{2\|\Phi_X(t)\|_F^2}|v|^2\\
	& \geq \gamma|v|^2-\frac{\gamma^2}{2}|v|^2 \geq \varepsilon |v|^2.
\end{align*}
	As a result, \eqref{pd} indeed holds.
	Therefore,
	\begin{equation}	\label{V}
		V(\tilde{\Theta}(t+1))-V(\tilde{\Theta}(t))	\leq -\varepsilon\|\Phi_X(t)\|_F^2\cdot	\|\Delta(t)\|_F^2 \leq 0.
	\end{equation}
	By \eqref{lf}, we have that $ \{V(\tilde{\Theta}(t))\}^{\infty}_{t=T^*+1} $ is a non-increasing sequence that is bounded from below.
	Thus, the limit \mbox{$ V_\infty:=\lim\limits_{t\to\infty}V(\tilde{\Theta}(t)) $} exists.
	Moreover, from \eqref{V} we have
	\begin{equation}\label{Vlimit}
		\varepsilon\sum_{t=T^*+1}^{\infty}
		\|\Phi_X(t)\|_F^2\cdot
		\|\Delta(t)\|_F^2\leq V(T^*+1)-V_\infty,
	\end{equation}
	implying that
	\begin{equation}\label{lim}
		\lim\limits_{t\to\infty}
		\|\Phi_X(t)\|_F^2\cdot
		\|\Delta(t)\|_F^2
		=\lim\limits_{t\to\infty}
		\frac{\|\Phi_X(t)\tilde{\Theta}(t)\|_F^2}
		{\|\Phi_X(t)\|_F^2}
		=0.
	\end{equation}
	From \eqref{PhiX}, there exists a $c \!>\! 0$ such that $ \tr\!\left(\Phi_X(t)^\top\Phi_X(t)\right)\!\leq\! c $ for all $ t>T^* $.
	Therefore, 
	\begin{align*}
		\frac{\|\Phi_X(t)\tilde{\Theta}(t)\|_F^2}{\|\Phi_X(t)\|_F^2}
		\geq \frac{1}{c} \|\Phi_X(t)\tilde{\Theta}(t)\|_F^2
	\end{align*}
	for all $ t>T^* $, which, by \eqref{lim}, implies that \begin{equation*}
		\lim\limits_{t\to\infty}\|\Phi_X(t)\tilde{\Theta}(t)\|_F^2=0.
	\end{equation*}
	Hence,
	\begin{equation*}
		\lim\limits_{t\to\infty}\Phi_X(t)\tilde{\Theta}(t)=0,
	\end{equation*}
	or, equivalently,
	\begin{equation}\label{convergence}
		\lim\limits_{t\to\infty}\Phi_X(t)\Theta(t)=
		\begin{bmatrix}
		I&0\\A_{\rm m}&B_{\rm m}
		\end{bmatrix}.
	\end{equation}
	Using the fact that $ \Phi_{X_+}(t) = A_{\rm s} \Phi_{X_-}(t) + B_{\rm s}\Phi_{U_-}(t) $, and using the definition of $ \hat{K}(t) $ and $ \hat{L}(t) $ in \eqref{gain}, we have
\begin{equation}\label{convergeAmBm}
	\begin{aligned}
		&\lim\limits_{t\to\infty}
		\begin{bmatrix}
		A_{\rm s}+B_{\rm s}\hat{K}(t) &B_{\rm s}\hat{L}(t)
		\end{bmatrix}\\
		=&\begin{bmatrix}
		A_{\rm s}&0
		\end{bmatrix}+\lim\limits_{t\to\infty}B_{\rm s}\Phi_{U_-}(t)\Theta(t)\\
		=&\begin{bmatrix}
		A_{\rm s}&0
		\end{bmatrix}+
		\lim\limits_{t\to\infty}
		(\Phi_{X_+}(t) - A_{\rm s}\Phi_{X_-}(t)) \Theta(t)\\
		=&\begin{bmatrix}
		A_{\rm s}&0
		\end{bmatrix}+
		\begin{bmatrix}
		A_{\rm m}-A_{\rm s}&B_{\rm m}
		\end{bmatrix}
		=\begin{bmatrix}
		A_{\rm m}&B_{\rm m}
		\end{bmatrix}.
	\end{aligned}
\end{equation}
We conclude that \eqref{converge} holds.
%	\\
%	\underline{Necessity:}\\
%	Suppose that \eqref{converge} holds.
%	Then, $$ \lim_{t\to\infty} B_{\rm s}\hat{K}(t) = A_{\rm m}-A_{\rm s} \text{ and } \lim_{t\to\infty} B_{\rm s}\hat{L}(t) = B_{\rm m}. $$
%	Since for all $ t\in\mathbb{N} $, $$ \im B_{\rm s}\hat{K}(t)\subseteq\im B_{\rm s} \text{ and } \im B_{\rm s}\hat{L}(t)\subseteq\im  B_{\rm s} $$ 
%	and $ \im B_{\rm s} $ is closed, we have that $$ \im (A_{\rm m}-A_{\rm s})\subseteq \im B_{\rm s} \text{ and } \im B_{\rm m}\subseteq\im  B_{\rm s}. $$
%	Therefore, there exist gains $ (K,L) $ such that $ A_{\rm m}-A_{\rm s} = B_{\rm s}K $ and $ B_{\rm m} = B_{\rm s}L $, that is, \eqref{mc} holds.
%	According to Lemma \ref{LT}, $ T^* $ is finite.
\end{proof}

Theorem~\ref{TMRAC} shows that, provided the matching equations \eqref{mc} admit a solution, the input design in Lemma~\ref{LT} guarantees convergence to a solution of the matching equation by only exploiting data that are informative for model reference control.
%Existing adaptive control methods with relaxed requirements imposed on data like \cite{lavretsky2009combined,chowdhary2010concurrent,chowdhary2013concurrent,cho2017composite,lee2019concurrent,roy2017combined,katiyar2022initial} identify conditions weaker than PE: yet, only sufficiency of such conditions is proven. 
%The key challenge solved by Theorem \ref{TMRAC} is to provide a necessary and sufficient condition. 
%The sufficiency part of the proof of Theorem \ref{TMRAC} shows that
%\begin{equation}\label{image*}
%\im\begin{bmatrix}
%I&0\\A_{\rm m}&B_{\rm m}
%\end{bmatrix}\subseteq
%\im\begin{bmatrix}
%X_-(T^*) \\ X_+(T^*)
%\end{bmatrix},
%\end{equation} 
%implies \eqref{Theta*} and thus the asymptotic convergence in \eqref{converge}. 
%Then, the proof of Theorem \ref{TMRAC} also shows the necessity of \eqref{image*},
%\begin{equation*}\label{imagelim}
%\im
%\lim\limits_{t\to\infty}
%(\Phi_{X_+}(t) - A_{\rm s}\Phi_{X_-}(t))
%\subseteq
%\im
%\begin{bmatrix}
%A_{\rm m}-A_{\rm s}&B_{\rm m}
%\end{bmatrix}\!,
%\end{equation*}
%where Lemma \ref{LT} serves to connect the notion of informative data with the matching equations.
%Hence,  Theorem \ref{TMRAC} establishes---for the first time---a necessary and sufficient condition for the asymptotic convergence of the adaptive gains to a solution of the matching equations. 
Notably, the proposed MRAC approach requires neither unique system identification nor structural assumptions on the system apart from controllability.
We refer the reader to Section \ref{excitation} for a discussion on structural assumptions in existing MRAC approaches, such as square assumptions on $ L $ ($p=m$), or $B_{{\rm s}}$ being known, or $B_{{\rm s}}$ being full column rank.
Before this, we discuss the boundedness of the closed-loop signals and the convergence of the error.

% In addition, we even don't require the ideal controllers to be unique, where \eqref{gain} will converge to one point in the set of the ideal controllers.

\subsection{Stability and boundedness of the closed-loop signals}
\label{stability}

In this subsection, we show the boundedness of all closed-loop signals and the convergence of the tracking error.

To establish a proper foundation, we begin by revisiting the following notation.
A function $ f: \mathbb{R}_{+} \to \mathbb{R}_{+} $ is said to be a \textit{$ \mathcal{K} $ function }if it is increasing and $ f(0)=0 $.
A function $ g: \mathbb{R}_{+} \times \mathbb{R}_{+} \to \mathbb{R}_{+} $ is said to be a \textit{$ \mathcal{KL} $ function} if for each fixed $ t\geq0 $, the function $ g(\cdot,t) $ is a $ \mathcal{K} $ function, and for each fixed $ s\geq0 $, the function $ g(s,\cdot) $ is non-increasing and $ g(s,t)\to0 $ as $ t\to\infty $.
We then consider the following definitions of stability.
\begin{definition}\label{Des}
	($ \!\!\!\! $\cite{zhou2017asymptotic}).
	Consider a linear time-varying autonomous system
	\begin{equation}\label{as}
	x(t+1)=A(t)x(t),
	\end{equation}
	where $ x(t) \in \mathbb{R}^n $ is the state.
	We say that \eqref{as} is \textit{uniformly exponentially stable} if, for any $ t_0\in\mathbb{Z}_+ $, there exist scalars $ \rho>0 $ and $ \lambda\in(0,1) $ such that 
	\begin{equation*}
	|x(t)|\leq\rho|x(t_0)|\lambda^{t-t_0}\quad \forall t\geq t_0.
	\end{equation*}
\end{definition}
\vspace{1ex}
\begin{definition}\label{Diss}
	($ \!\! $\cite{jiang2001input}).
	Consider a nonlinear system
	\begin{equation}\label{ns}
	x(t+1)=h(x(t),u(t)),
	\end{equation}
	where $ x(t) \in \mathbb{R}^n $ is the state, $ u(t) \in \mathbb{R}^m $ is the input, and the map $ h:  \mathbb{R}^{n}\times\mathbb{R}^{m}\to\mathbb{R}^{n} $ satisfies $ h(0,0)=0 $.
	We say that \eqref{ns} is \textit{input-to-state stable} if there exist a $ \mathcal{KL} $-function $ g:\mathbb{R}_{+} \times \mathbb{R}_{+} \to \mathbb{R}_{+} $ and a $ \mathcal{K} $-function $ f: \mathbb{R}_{+} \to \mathbb{R}_{+} $ such that, for any input $ u\in\ell_{\infty}^m $ and initial state $ x(0)\in \mathbb{R}^n $, it holds that
	\begin{equation}\label{iss}
	|x(t)|\leq g(|x(0)|,t)+f(\bar{u}),
	\end{equation}
	where $ \bar{u}\in \mathbb{R}_+ $ is such that $ |u(t)|\leq\bar{u} $, for all $ t\in \mathbb{Z}_+ $.
\end{definition}
\vspace{0.6ex}

Definition \ref{Diss} implies that if a system is input-to-state stable, then for any initial state $ x(0) $ and $ u\in \ell_{\infty}^m $, we have $ x\in\ell_{\infty}^n $.
We are now in a position to give the following stability and boundedness result.

\begin{theorem}\label{Pe}
	Assume that the matching equations \eqref{mc} have a solution.
	Let $x(0) \in \mathbb{R}^n$ and consider the input $u(t) \in \mathbb{R}^m$ for $t \in \mathbb{N}$ as in Lemma \ref{LT}.
	Then, the resulting input-state trajectory $ (u,x) \!\in\! \mathfrak{B} $ satisfies $ x\!\in\!\ell_{\infty}^n $ and $ u\!\in\! \ell_{\infty}^m $. 
	Moreover, the tracking error $ e=x-x_{\rm m} $ satisfies $e \!\in\! \ell_{\infty}^n $ and $ \lim_{t \rightarrow \infty} e(t) = 0 $.
\end{theorem}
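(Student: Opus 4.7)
The plan is to split the argument into three parts: boundedness of the state $x$, convergence of the tracking error $e$ to zero, and boundedness of the input $u$. Since the matching equations admit a solution, Lemma \ref{LT} gives $T^* \leq n+m$ finite, so all signals are trivially bounded on $[0,n+m]$; the analysis that follows focuses on $t > n+m$, where the logic in Lemma \ref{LT} reduces to $u(t) = u_{\rm a}(t) = \hat{K}(t) x(t) + \hat{L}(t) r(t)$. Theorem \ref{TMRAC} ensures that $A_c(t) := A_{\rm s} + B_{\rm s}\hat{K}(t) \to A_{\rm m}$ and $B_{\rm s}\hat{L}(t) \to B_{\rm m}$; these are the key asymptotic facts driving the rest of the analysis.

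For boundedness of $x$, I would rewrite the closed-loop equation as $x(t+1) = A_c(t) x(t) + B_{\rm s}\hat{L}(t) r(t)$ and use the Lyapunov function $V(x) = x^\top P x$, where $P > 0$ satisfies $A_{\rm m}^\top P A_{\rm m} - P = -Q < 0$. By continuity, $A_c(t)^\top P A_c(t) - P \leq -Q/2$ for all sufficiently large $t$. Combining this with Young's inequality (and the fact that $B_{\rm s}\hat{L}(t)$ is bounded because it converges) yields
\begin{equation*}
V(x(t+1)) - V(x(t)) \leq -\tfrac{1}{4}\lambda_{\min}(Q)|x(t)|^2 + c\,|r(t)|^2
\end{equation*}
for some $c>0$ and all large $t$, whence a standard ISS argument (combined with boundedness over the initial finite window) gives $x \in \ell_\infty^n$. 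For the tracking error, I would then rewrite the same equation as $x(t+1) = A_{\rm m} x(t) + B_{\rm m} r(t) + \delta(t)$ with $\delta(t) = (A_c(t) - A_{\rm m}) x(t) + (B_{\rm s}\hat{L}(t) - B_{\rm m}) r(t)$; since $x, r$ are bounded while both matrix perturbations vanish, $\delta(t) \to 0$. Subtracting the reference dynamics \eqref{r} gives $e(t+1) = A_{\rm m} e(t) + \delta(t)$, and since $A_{\rm m}$ is Schur, the discrete-time variation-of-parameters formula combined with $\delta(t)\to 0$ yields $e\in\ell_\infty^n$ and $e(t)\to 0$ by a standard split-the-sum argument exploiting exponential decay of $A_{\rm m}^k$.

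The hardest step is showing $u \in \ell_\infty^m$: the closed-loop dynamics only constrain $B_{\rm s}\hat{K}(t)$ and $B_{\rm s}\hat{L}(t)$, not $\hat{K}(t)$ and $\hat{L}(t)$ individually, so without extra structural assumptions on $B_{\rm s}$ one cannot invert to bound the gains directly. The plan is to combine three facts: (i) $\Theta(t)$ is bounded in Frobenius norm, since the Lyapunov function $V(\tilde{\Theta}) = \tfrac{1}{2}\tr(\tilde{\Theta}^\top\tilde{\Theta})$ from the proof of Theorem \ref{TMRAC} is non-increasing; (ii) the matrix $\Phi_{U_-}(t)$ has fixed width $T^*+1$, with only the last column time-varying; and (iii) the gating condition $|x(t)|\leq\sigma$ in the definition \eqref{PhiU}, with $\sigma$ chosen (as in the remark following Lemma \ref{LT}) larger than the eventual bound on $|x|$, so that updates occur at every time step past some instant. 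Combining these with the recursion $u(t) = \Phi_{U_-}(t)\Theta(t)\bigl[\,x(t)^\top\ r(t)^\top\,\bigr]^\top$ by induction on update times should yield a uniform bound on the last column of $\Phi_{U_-}$ and hence on $u$. This technical crux, where the interplay between the adaptive law, the concurrent-learning memory matrix, and the bounded state is most delicate, is where I expect the main difficulty to lie.
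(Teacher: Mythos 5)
Your treatment of $x\in\ell_\infty^n$, $e\in\ell_\infty^n$ and $e(t)\to 0$ is correct and is essentially the paper's own route in different packaging: the paper invokes uniform exponential stability of the time-varying system $e'(t+1)=(A_{\rm s}+B_{\rm s}\hat K(t))e'(t)$ (via a cited result on systems whose state matrix converges to a Schur matrix, using Theorem \ref{TMRAC}) and then an explicit ISS estimate for $e(t+1)=(A_{\rm s}+B_{\rm s}\hat K(t))e(t)+\hat u(t)$ with the vanishing perturbation $\hat u(t)=(A_{\rm s}+B_{\rm s}\hat K(t)-A_{\rm m})x_{\rm m}(t)+(B_{\rm s}\hat L(t)-B_{\rm m})r(t)$, whereas you use a quadratic Lyapunov function for $A_{\rm m}$ plus variation of parameters; the two are interchangeable, and your observation that this part needs only boundedness of $B_{\rm s}\hat K(t)$ and $B_{\rm s}\hat L(t)$ (which follows from \eqref{converge}) is sound.

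The gap is in the step $u\in\ell_\infty^m$, and your proposed induction over update times does not close. At an update time $t>T^*+1$ the stored input column of $\Phi_{U_-}(t)$ is $u(t-1)=\Phi_{U_-}(t-1)\Theta(t-1)\,[\,x(t-1)^\top\ r(t-1)^\top\,]^\top$, and the previously stored column enters this expression multiplied by the scalar obtained from the last row of $\Theta(t-1)$ times the regressor. Boundedness of $\Theta$, $x$ and $r$ therefore yields only an estimate of the form $|u(t)|\le c_1+c_2\,|u_{\rm prev}|$ with no reason whatsoever to have $c_2<1$; iterating over update times then permits geometric growth, so no uniform bound on the last column of $\Phi_{U_-}(t)$, and hence none on $[\hat K(t)\ \hat L(t)]=\Phi_{U_-}(t)\Theta(t)$ or on $u(t)$, follows from the three facts you list. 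The gating $|x(t)|\le\sigma$ in \eqref{PhiU} constrains only the state columns, not the input column, and having updates occur at every step (your point (iii)) makes the potential growth mechanism more, not less, active. Note also that since $B_{\rm s}$ is not assumed to have full column rank, a growing component of $u$ in $\ker B_{\rm s}$ is perfectly compatible with bounded $x$, so you cannot recover a bound on $u$ from $B_{\rm s}u(t)=x(t+1)-A_{\rm s}x(t)$ either. For comparison, the paper does not run any such recursion: it deduces $\hat K\in\ell_\infty^{m,n}$ and $\hat L\in\ell_\infty^{m,p}$ directly from $\sup_{t>T^*}\|\Theta(t)\|_2<\infty$ together with \eqref{gain}, treating $\Phi_{U_-}(t)$ as a bounded matrix of recorded data, and then obtains $u\in\ell_\infty^m$ from \eqref{inputa} once $x\in\ell_\infty^n$ is available. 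To make your route work you would need an argument for boundedness of the stored input column that does not presuppose $u\in\ell_\infty^m$ (for instance a contraction property of the scalar coefficient, or a separate bound on the $\ker B_{\rm s}$-component of the gains); as written, this step fails.
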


\begin{proof}
	Let us first consider the auxiliary autonomous system
	\begin{equation}\label{ea}
	e'(t+1)=(A_{\rm s}+B_{\rm s}\hat{K}(t))e'(t).
	\end{equation}
	Since $ \lim_{t\to\infty} A_{\rm s}+B_{\rm s}\hat{K}(t) = A_{\rm m} $ and $ A_{\rm m} $ is Schur, \cite[Corollary 3]{zhou2017asymptotic} implies that \eqref{ea} is uniformly exponentially stable.
	From Definition \ref{Des}, we have that for any $ t_0\in\mathbb{Z}_+ $ and $ t>t_0 $, there exist scalars $ \rho'>0 $ and $ \lambda'\in(0,1) $ such that 
	\begin{equation}\label{e'}
	|e'(t)|=\big|(\prod_{i=t_0}^{t-1}(A_{\rm s}+B_{\rm s}\hat{K}(i)))e'(t_0)\big|\leq\rho' |e'(t_0)|(\lambda')^{t-t_0}.
	\end{equation}
	Next, let us express the dynamics of $ e $ as
	\begin{equation*}
	\begin{aligned}
	&e(t+1)=x(t+1)-x_{\rm m}(t+1)\\
	=&(A_{\rm s}+B_{\rm s}\hat{K}(t))x(t)+B_{\rm s}\hat{L}(t)r(t)-A_{\rm m}x_{\rm m}(t)-B_{\rm m}r(t)\\
	=&(A_{\rm s}+B_{\rm s}\hat{K}(t))e(t)\\
	+&(A_{\rm s}+B_{\rm s}\hat{K}(t)-A_{\rm m})x_{\rm m}(t)+(B_{\rm s}\hat{L}(t)-B_{\rm m})r(t).
	\end{aligned}
	\end{equation*}
	In other words, 
	\begin{equation}\label{e}
	e(t+1)=(A_{\rm s}+B_{\rm s}\hat{K}(t))e(t)+\hat{u}(t),
	\end{equation}
	where $ \hat{u}(t) $ is defined as
	\begin{equation}\label{hatu}
	 \hat{u}(t)=(A_{\rm s}+B_{\rm s}\hat{K}(t)-A_{\rm m})x_{\rm m}(t)+(B_{\rm s}\hat{L}(t)-B_{\rm m})r(t).
	\end{equation}
	Let us now show that $ \hat{u} \in \ell_{\infty}^n $. 
	Note that $ x_{\rm m}\in\ell_{\infty}^n $ since $ r\in\ell_{\infty}^p $ and $ A_{\rm m} $ is Schur.
	Also, according to \eqref{lf} and \eqref{V}, we have that there exists a constant $ c>0 $ such that $ \|\Theta(t)\|_2<c $ for all $ t>T^* $, which further implies $ \hat{K}\in\ell_{\infty}^{m,n} $ and $ \hat{L}\in\ell_{\infty}^{m,p} $ by \eqref{gain}.
	This implies that $ \hat{u}\in\ell_{\infty}^m $.
	Let $ \bar{u}\in \mathbb{R}_+ $ be such that $ |\hat{u}(t)| \leq \bar{u} $ for all $ t\in \mathbb{Z}_+ $.\\
	We now show that \eqref{e} is input-to-state stable.
	Using \eqref{e'}, there exist $ \rho_0,\rho_1,\dots,\rho_{t-1}>0 $ and $ \lambda_0,\lambda_1,\dots,\lambda_{t-1}\in(0,1) $ such that
	\begin{align*}
	|e(t)|=&\big|(\prod_{i=0}^{t-1}(A_{\rm s}+B_{\rm s}\hat{K}(i)))e(0)+\hat{u}(t-1)+\\
	&\sum_{j=1}^{t-1}(\prod_{i=j}^{t-1}(A_{\rm s}+B_{\rm s}\hat{K}(i)))\hat{u}(j-1)\big|\\
	\leq&\big|(\prod_{i=0}^{t-1}(A_{\rm s}+B_{\rm s}\hat{K}(i)))e(0)\big|+|\hat{u}(t-1)|+\\
	&\sum_{j=1}^{t-1}\big|(\prod_{i=j}^{t-1}(A_{\rm s}+B_{\rm s}\hat{K}(i)))\hat{u}(j-1)\big|\\
	\leq&\rho_0 |e(0)|\lambda_0^{t}+|\hat{u}(t-1)|+\sum_{j=1}^{t-1}\rho_j |\hat{u}(j-1)|\lambda_j^{t-j}\\
	\leq&\rho_0 |e(0)|\lambda_0^{t}+\bar{\rho}\bar{u}\sum_{j=0}^{t-1}\bar{\lambda}^j
	\leq \rho_0 |e(0)|\lambda_0^{t}+\frac{\bar{\rho}\bar{u}}{1-\bar{\lambda}},
	\end{align*}
	where $ \bar{\rho}\!=\!\max\{1,\rho_1,\dots,\rho_{t-1}\} $ and $ \bar{\lambda}\!=\!\max\{\lambda_1,\dots,\lambda_{t-1}\} $.$ \! $
	Given the $\mathcal{KL}$-function
	\begin{equation*}
	g(|e(0)|,t)=\rho_0 |e(0)| \lambda_0^{t}
	\end{equation*}
	and the $\mathcal{K}$-function
	\begin{equation*}
	f(\bar{u})=\frac{\bar{\rho}\bar{u}}{1-\bar{\lambda}},
	\end{equation*}
	we have
	\begin{equation*}
	e(t)\leq g(|e(0)|,t)+f(\bar{u}).
	\end{equation*}
	Hence, \eqref{e} is input-to-state stable with respect to the input $ \hat{u} $, implying, from Definition \ref{Diss}, that $ e\in\ell_{\infty}^n $.
	From the fact that $ x_{\rm m}\in\ell_{\infty}^n $, we further have $ x=e+x_{\rm m}\in\ell_{\infty}^n $.
	Then, by \eqref{controller}, we have $ u\in\ell_{\infty}^m $.
	\\
	Finally, we demonstrate the convergence of the error.
	According to \cite[Sect. 3.2]{jiang2001input}, for the input-to-state stable system \eqref{e}, there exists a $ \mathcal{K} $-function $ f' $ such that for any $ e(0)\in\mathbb{R}^{n} $,
	\begin{equation*}
	\limsup_{t\to\infty} |e(t)|\leq f'(\limsup_{t\to\infty} |\hat{u}(t)|).
	\end{equation*}
	From \eqref{hatu} and the fact that $ x_{\rm m}\in\ell_{\infty}^n $, $ r\in\ell_{\infty}^p $ and \eqref{converge} holds, we have $ \lim_{t \rightarrow \infty} \hat{u}(t) = 0 $, implying $ \lim_{t \rightarrow \infty} e(t) = 0 $.
\end{proof}

\subsection{Discussion on excitation and structural assumptions in state-of-the-art MRAC}
\label{excitation}

We now discuss the requirements on collected data imposed by the proposed MRAC method as compared to existing state-of-the-art MRAC methods.
Without going into full details, we mention that state-of-the-art approaches like concurrent learning or composite MRAC \cite{lavretsky2009combined,chowdhary2010concurrent,chowdhary2013concurrent,cho2017composite,lee2019concurrent,roy2017combined} are based on a Lyapunov design in continuous time, based on  the error dynamics
\begin{equation}\label{ec}
\dot{e}=A_{\rm m}e+B_{\rm s}\tilde{\Lambda}\psi,
\end{equation}
with $ A_{\rm m} $ the Hurwitz state matrix of the reference model, $ B_{\rm s} $ the control matrix of the system, $ \psi $ measured signals, and $ \tilde{\Lambda} $ a parameter estimation error. 
In this paper, we have proposed a \textit{discrete-time} MRAC method. 
Although a direct comparison to the state-of-the-art is thus not possible, some considerations can still be made based on the analogy between the error dynamics in \eqref{e} and \eqref{ec}.
To this end, note that all state-of-the-art solutions with the exception of \cite{roy2017combined} require the input matrix $ B_{\rm s} $ to be known, which is not required in the MRAC method in the current paper.
%
%These estimation pertains to controller gains or coefficient of uncertainty.
%Then, we propose the following Lyapunov function candidate:
%\begin{equation}\label{lya}
%V=e^\top Pe + \tr(\tilde{\Lambda}^\top\Gamma\tilde{\Lambda}),
%\end{equation}
%where $ P=P^\top>0 $ satisfies the Lyapunov equation $ PA_{\rm m}^\top+A_{\rm m}P=-Q $ for some $ Q=Q^\top>0 $.
%Given the adaptive law
%\begin{equation}\label{al}
%\dot{\tilde{\Lambda}}=-\Gamma\psi e^\top P B_{\rm s},
%\end{equation}
%we have
%\begin{equation}\label{dV}
%\dot{V}=-e^\top Q e\leq0.
%\end{equation}
%In most literature on concurrent learning or composite MRAC, \eqref{al} is utilized along with additional mechanisms to achieve $ \dot{V}<0 $, that is, the parameter convergence of $ \tilde{\Lambda} $.
%In other words, the knowledge of $ B_{\rm s} $ is significant for establishing the adaptive law \eqref{al}.
%Hence, $ B_{\rm s} $ is usually assumed to be known in MRAC \cite{lavretsky2009combined,chowdhary2010concurrent,chowdhary2013concurrent,cho2017composite,lee2019concurrent}.
The estimation of $B_{\rm s}$ in \cite{roy2017combined} leveraged the concept of \textit{initial excitation} to relax the classical persistence of excitation conditions.
When written in discrete time, initial excitation requires the existence of $ T\in\mathbb{N} $ and $ \delta>0 $ such that
\begin{equation}\label{ie}
\sum_{t=0}^{T-1}
\begin{bmatrix}
x(t)\\u(t)
\end{bmatrix}
\begin{bmatrix}
x(t)\\u(t)
\end{bmatrix}^\top -\delta I > 0.
\end{equation}
Clearly, \eqref{ie} implies that the data
\begin{equation*}
	\begin{bmatrix}
	X_-(T)\\U_-(T)
	\end{bmatrix}=
	\begin{bmatrix}
	x(0)&x(1)&\cdots&x(T-1)\\
	u(0)&u(1)&\cdots&u(T-1)
	\end{bmatrix}
\end{equation*}
satisfy the full rank condition \eqref{frc}. Thus, from Lemma \ref{LDISI}, we conclude that the requirements imposed on collected data by initial excitation are such that $ (A_{\rm s},B_{\rm s}) $ can be uniquely identified. 
Such an observation further illustrates the significance of Proposition \ref{TMC} and Theorem \ref{TBm}.
Similar to \cite{roy2017combined}, let us assume that the matching equations \eqref{mc} have a solution and $ p=m $.
From Proposition \ref{TMC}, we know that \eqref{ie} implies data informativity for model reference control.
However, when $ B_{\rm m} $ does not have full column rank, Theorem \ref{TBm} implies that the initial excitation \eqref{ie} is sufficient but not necessary to solve the model reference control problem. 
This means that initial excitation imposes stronger conditions on collected data, not required in the proposed MRAC approach.
%Existing adaptive control methods with relaxed conditions like \cite{lavretsky2009combined,chowdhary2010concurrent,chowdhary2013concurrent,cho2017composite,lee2019concurrent,roy2017combined,katiyar2022initial} identify conditions weaker than PE: yet, only sufficiency of such conditions is proven. 
Indeed, adaptive approaches based on initial excitation or other relaxed excitation conditions \cite{lavretsky2009combined,chowdhary2010concurrent,chowdhary2013concurrent,cho2017composite,lee2019concurrent,roy2017combined} only prove the sufficiency of such conditions for converging to a solution of the matching equations.
In comparison to these works, the contribution of this paper is to establish necessary and sufficient conditions for converging to a solution of the matching equations.

The works \cite{kogan1994locally,kogan1996locally} study least squares estimation of the system parameters in the absence of persistence of excitation. 
These works show that, in this case, the parameters do not necessarily converge to the true system parameters. However, the closed-loop system matrix may converge to a reference matrix. 
The latter convergence, however, is not always guaranteed and depends on the quality of the data, which was not discussed in \cite{kogan1994locally,kogan1996locally}. 
In comparison to these works, the contribution of this paper is to highlight the required conditions on the closed-loop data under which the adaptive gains converge to a solution of the matching equations.

Let us finally note that most approaches \cite{lavretsky2009combined,chowdhary2010concurrent,chowdhary2013concurrent,cho2017composite,roy2017combined,lee2019concurrent} in the multi-variable MRAC literature require $ B_{\rm s} $ to have full column rank, which in turns guarantees a unique solution $ (K,L) $ of the matching equations \eqref{mc} \cite{tao2014multivariable}.
This is not required in the proposed MRAC method, which remains valid even when an infinite number of solutions of the matching equations \eqref{mc} exist. 
This will be illustrated in the numerical example in the next section.
%Yet, even these relaxed notions rely on a full rank condition similar to (4), which can be seen by noting that,
%Obviously, if \eqref{fe} holds, we also have 
%\begin{equation}\label{Ff}
%\exists t_1,t_2,\dots,t_l \in [0,t_{\rm e}], \text{ s.t. } \rank(F)=n,
%\end{equation}
%which is a \textit{full rank condition} similar to \eqref{frc}.
%Then, the data set $ F $ can be utilized to achieve concurrent learning adaptive control \cite{chowdhary2010concurrent}.
%
%However, instead of the signals $ f $ measured directly, the assumption \eqref{fe} is applied to the filtered signals $ g $ in \cite{roy2017combined,jha2019initial,katiyar2022initial,cho2017composite,lee2019concurrent,cho2023composite} with the filter equations in the following form
%\begin{equation}\label{fg}
%\end{equation}
%As is seen in \eqref{g}, $ g $ is linear combinations of $ f $.
%Hence, the finite excitation over $ [t_0,t_0+t_{\rm e}] $ of $ g $ is equivalent to the finite excitation over $ [t_0,t_0+t_{\rm e}] $ of $ f $, which also implies the full rank condition \eqref{Ff}.

\section{Simulation Results}

In this section, the proposed MRAC method is demonstrated on a numerical system and on a practical example involving highly maneuverable aircraft dynamics.
To validate the method under diverse data excitation conditions, we consider, for each system, four simulation scenarios: (i) two simulations with reference input signals drawn from the normal distribution; (ii) two simulations with constant reference input signals.
Using four random initial conditions, convergence to four different controllers is observed. 
This is expected as the matching equations have an infinite number of solutions.
For each simulation, we set $ \gamma=1.99 $ and $ \sigma=10^2 $.
To terminate the simulation when sufficient convergence is achieved, we set, motivated by \eqref{convergence}, the stopping criterion
\begin{equation}\label{epsilon}
\left\|\Phi_X(t)\Theta(t)-
\begin{bmatrix}
I&0\\A_{\rm m}&B_{\rm m}
\end{bmatrix}\right\|_F^2\leq\varepsilon,
\end{equation}
with $ \varepsilon = 10^{-10} $. 
During the implementation, the informative time $T^*$ is initialized to $ T^*=\infty $.
At every time step $ t $, we check whether the data $ (U_-(t),X(t)) $ satisfy \eqref{image}, and update $ T^* $ accordingly.
In all simulations, we verify that \mbox{$T^* \leq n+m$}, consistent with Lemma \ref{LT}, and that the control gains $(\hat{K}(t),\hat{L}(t))$ in \eqref{gain} converge to a solution of the matching equations \eqref{mc}, as guaranteed by Theorem \ref{TMRAC}.
%Either $T^* \leq n+m$ or, if $ T^*=\infty $ after $t=n+m$ time steps, then we know from Lemma \ref{LT} that there are no gains $ (K,L) $ satisfying \eqref{mc} (this however never happens in our simulations).
%The online implementation of the input in Lemma \ref{LT} involves two distinct cases for generating the input-state data, where an informative set of data will be generated if there exist controller gains $ (K,L) $ satisfying \eqref{mc}.
%After a set of informative data are collected, $ T^* $ is recorded.
%and the control gains $ (\hat{K},\hat{L}) $ in \eqref{gain} converge to a solution of the matching equations \eqref{mc} in line with Theorem \ref{TMRAC}.

\subsection{Numerical system}

Consider the controllable system with matrices
\begin{align*}
A_{\rm s} &=
\begin{bmatrix}
-1.1   & -0.85  & -0.1   & -0.62  \\
-0.24  & -0.65  &  0.77  & -0.34  \\
-1.57  & -0.26  & -0.36  & -1.2   \\
0.33  & -0.99  & -0.43  &  0.56
\end{bmatrix}\!,\\
B_{\rm s} &=
\begin{bmatrix}
 0.35 &  0.52 &  0.01 \\
 0.39 & -0.14 &  1.45 \\
 1.04 &  0.98 &  1.16 \\
 0.13 &  0.34 & -0.29
\end{bmatrix}\!,
\end{align*}
being unknown for control design. 
Further consider the reference model with matrices
\begin{align*}
A_{\rm m} &=
\begin{bmatrix}
-0.75 & -0.32 &  0.24 & -0.27 \\
0.15 &  0.66 & -0.29 &  0.05 \\
-0.53 &  1.88 & -0.48 & -0.16 \\
0.46 & -0.94 & -0.01 &  0.69
\end{bmatrix}\!,\\
B_{\rm m} &=
\begin{bmatrix}
0.52 & -0.86 & -0.69 \\
-0.14 &  1.2  &  0.67 \\
0.98 & -0.86 & -0.92 \\
0.34 & -0.76 & -0.55
\end{bmatrix}\!.
\end{align*}

\begin{figure*}[htbp]
	\centering
	\subfigure[State tracking errors in $ \mathcal{S}_1 $]
	{
		\includegraphics[trim=90bp 266bp 100bp 280bp, clip, height=4.26cm]{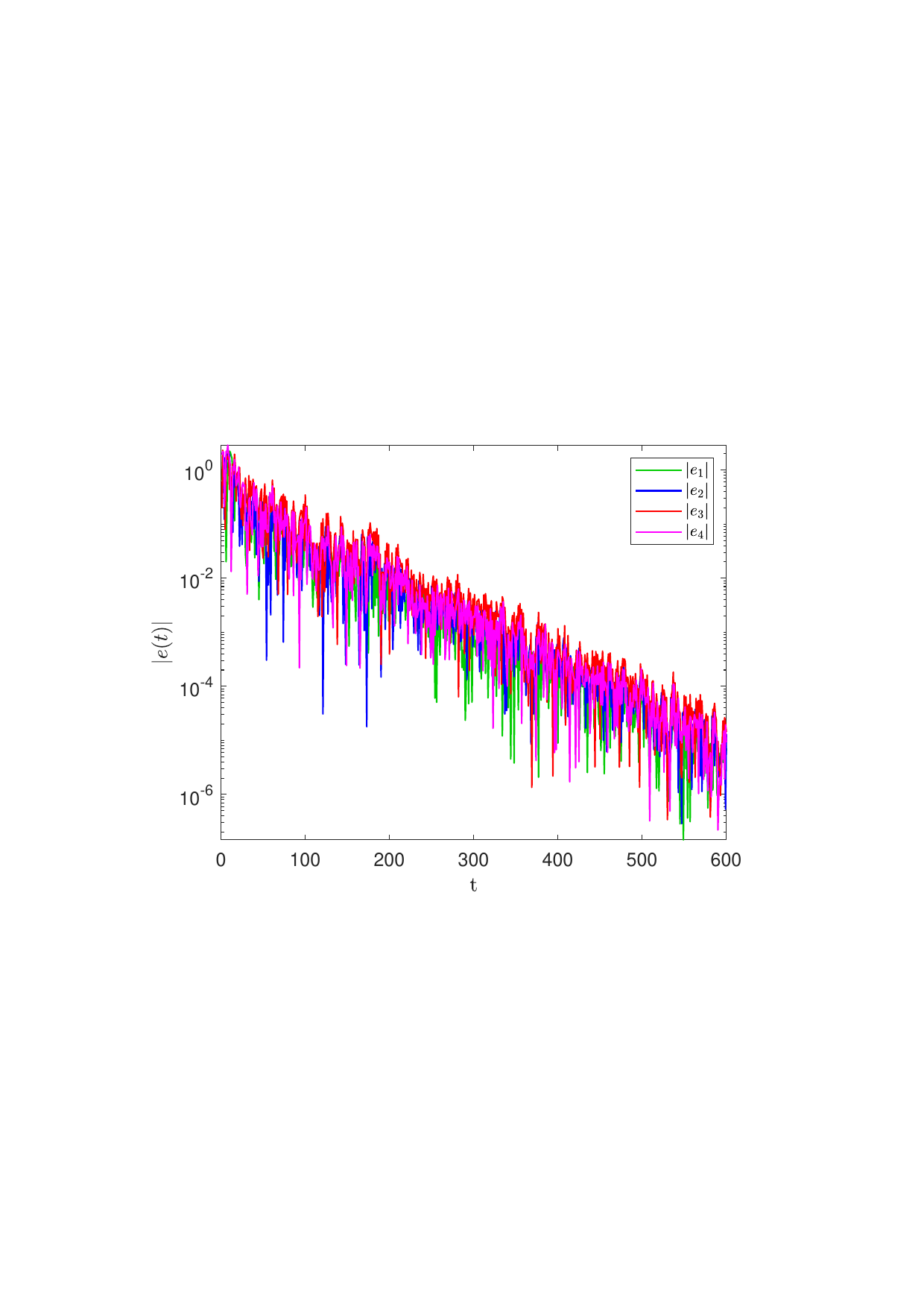}}
	\subfigure[Norm of matching error in $ \mathcal{S}_1 $]
	{
		\includegraphics[trim=90bp 266bp 100bp 280bp, clip, height=4.26cm]{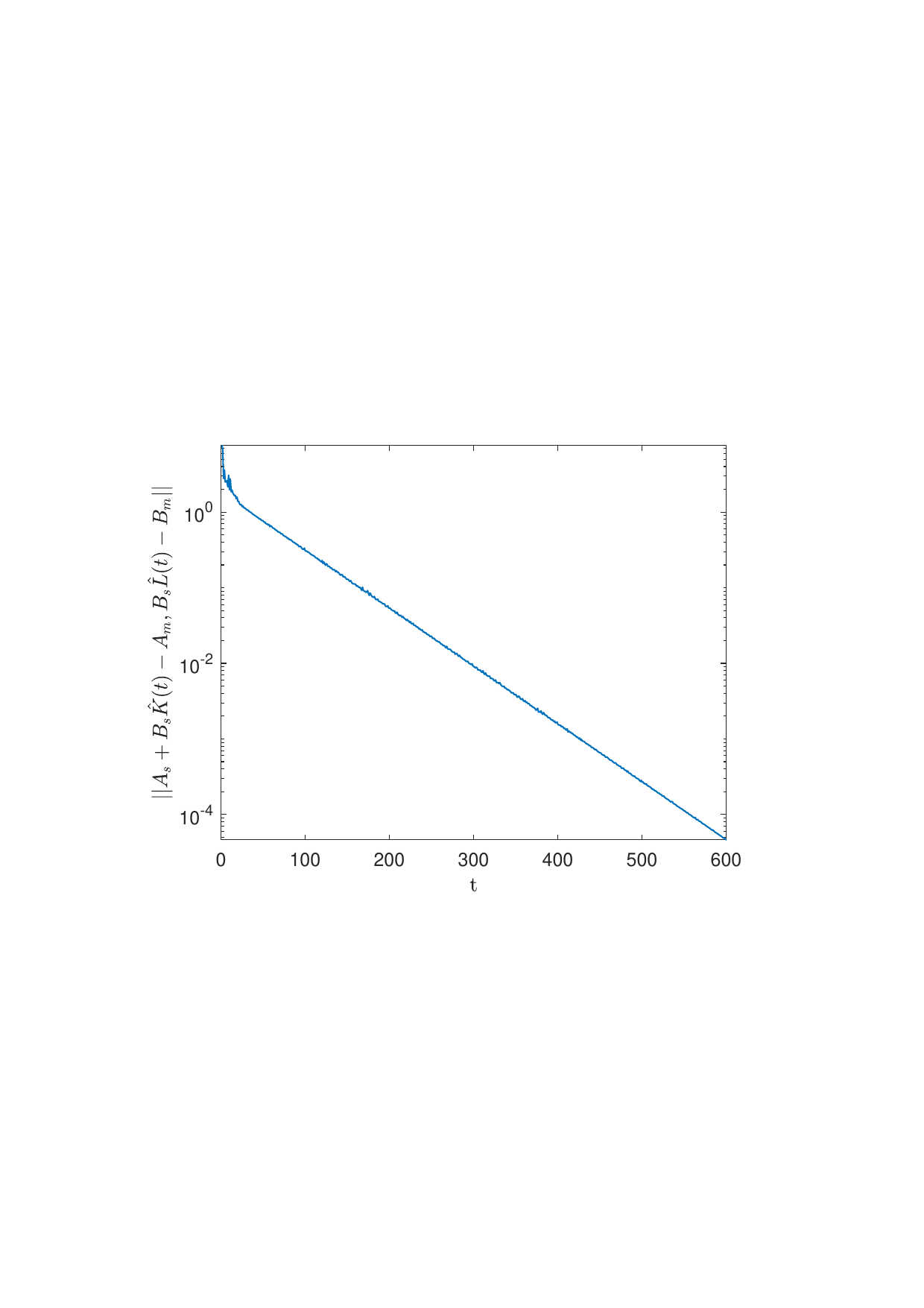}}
	\subfigure[Input signals in $ \mathcal{S}_1 $]
	{\includegraphics[trim=90bp 266bp 100bp 280bp, clip, height=4.26cm]{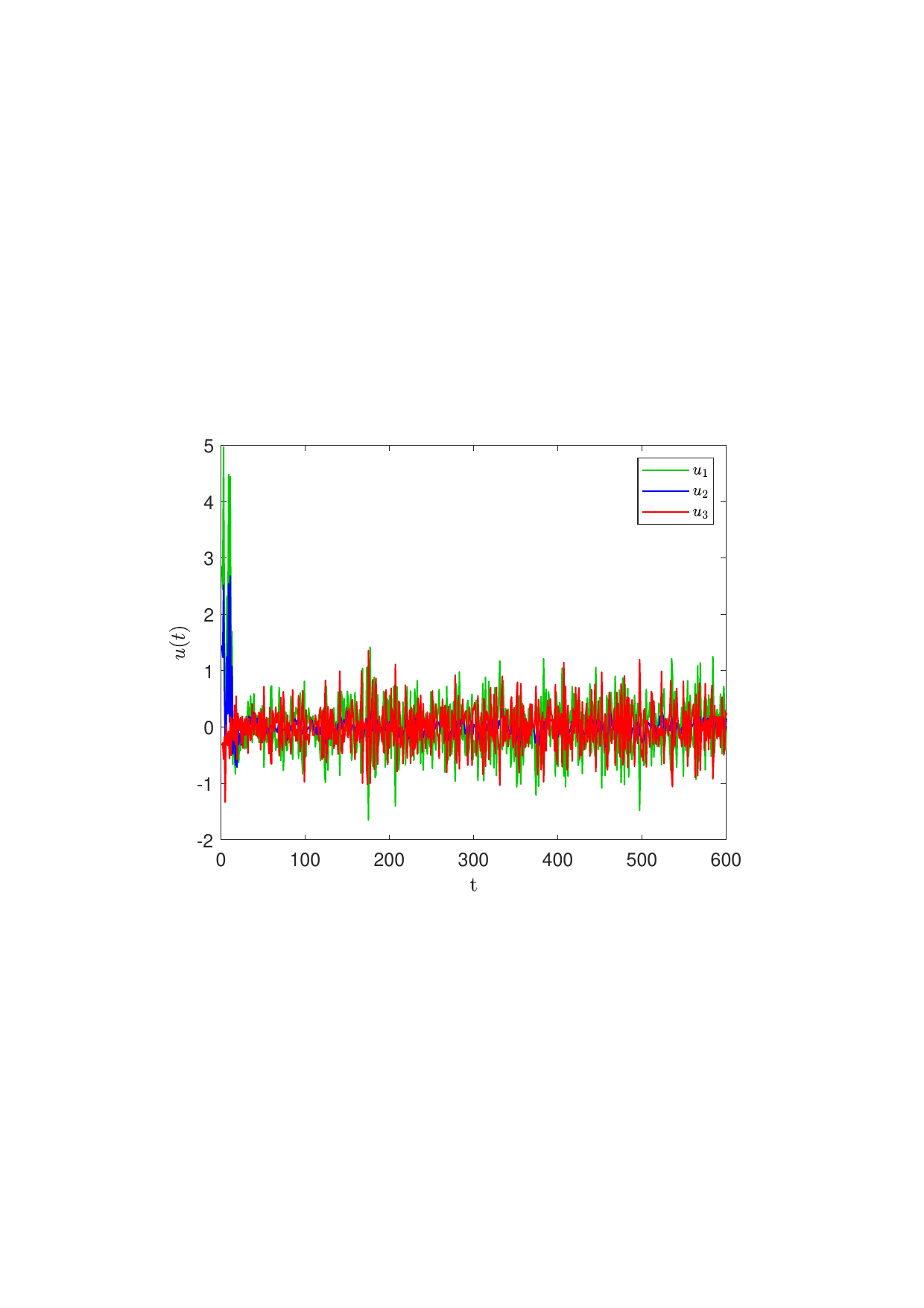}
		\label{rinput1}}
	\subfigure[State tracking errors in $ \mathcal{S}_2 $]
	{
		\includegraphics[trim=90bp 266bp 100bp 280bp, clip, height=4.26cm]{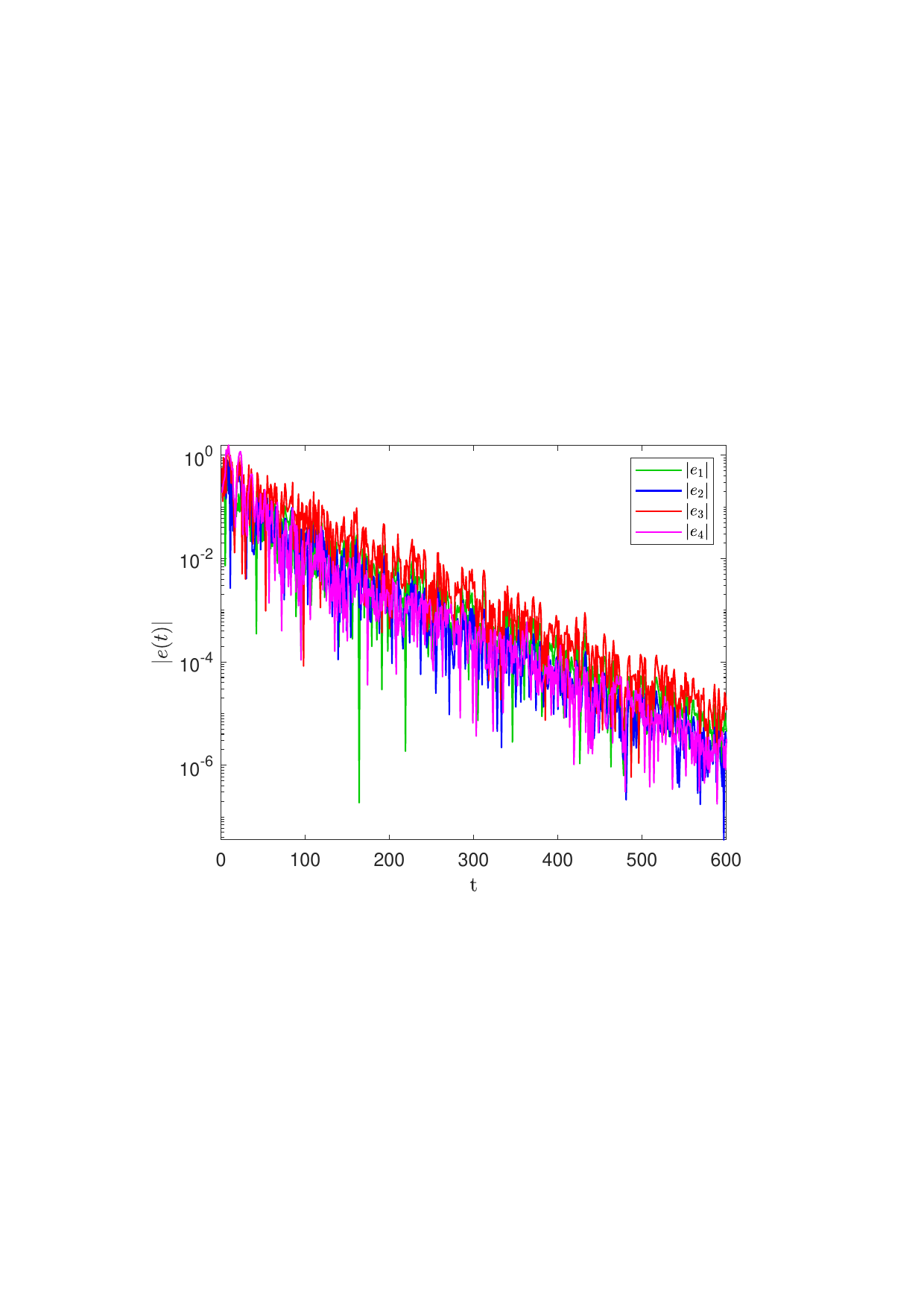}}
	\subfigure[Norm of matching error in $ \mathcal{S}_2 $]
	{
		\includegraphics[trim=90bp 266bp 100bp 280bp, clip, height=4.26cm]{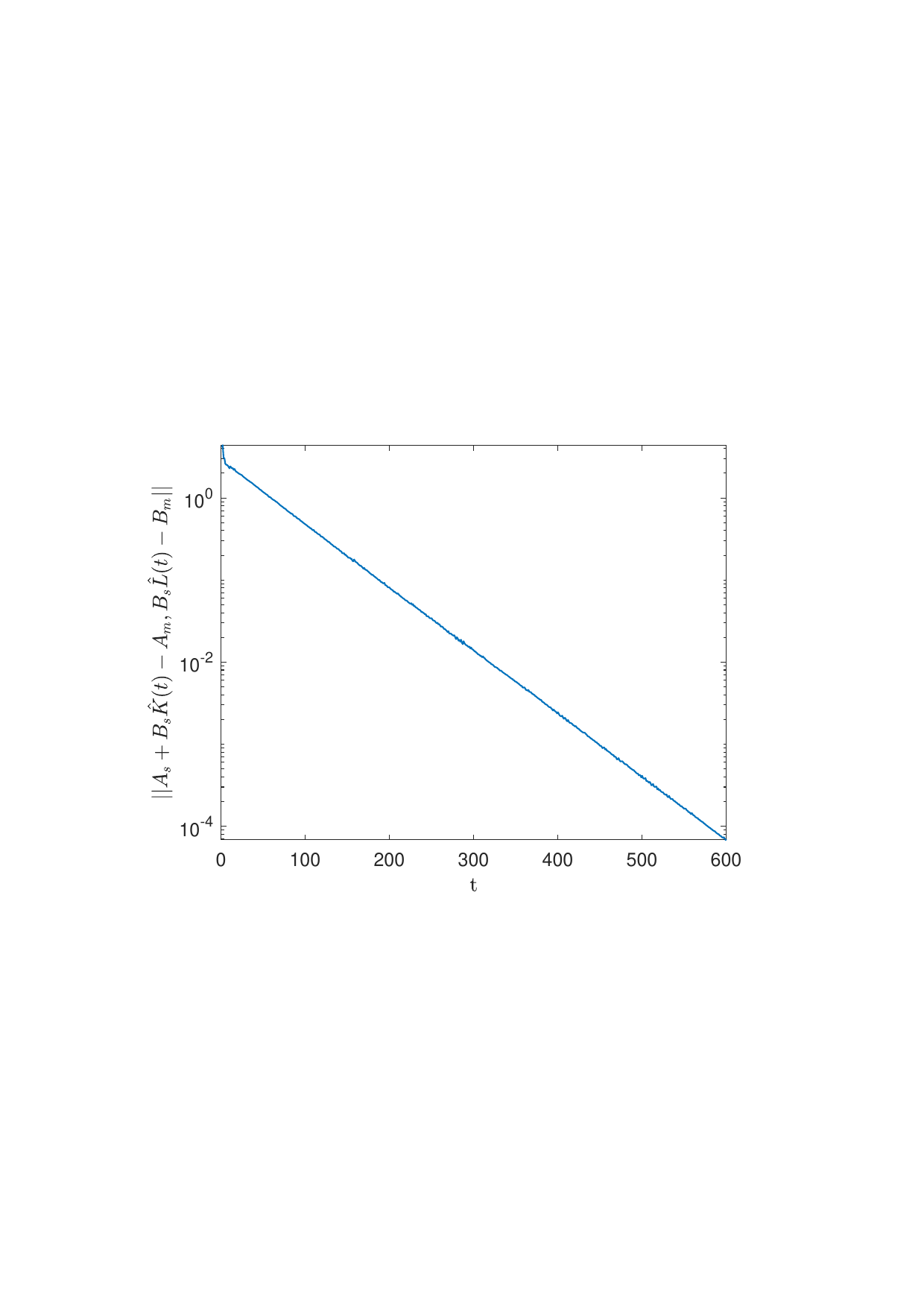}}
	\subfigure[Input signals in $ \mathcal{S}_2 $]
	{\includegraphics[trim=90bp 266bp 100bp 280bp, clip, height=4.26cm]{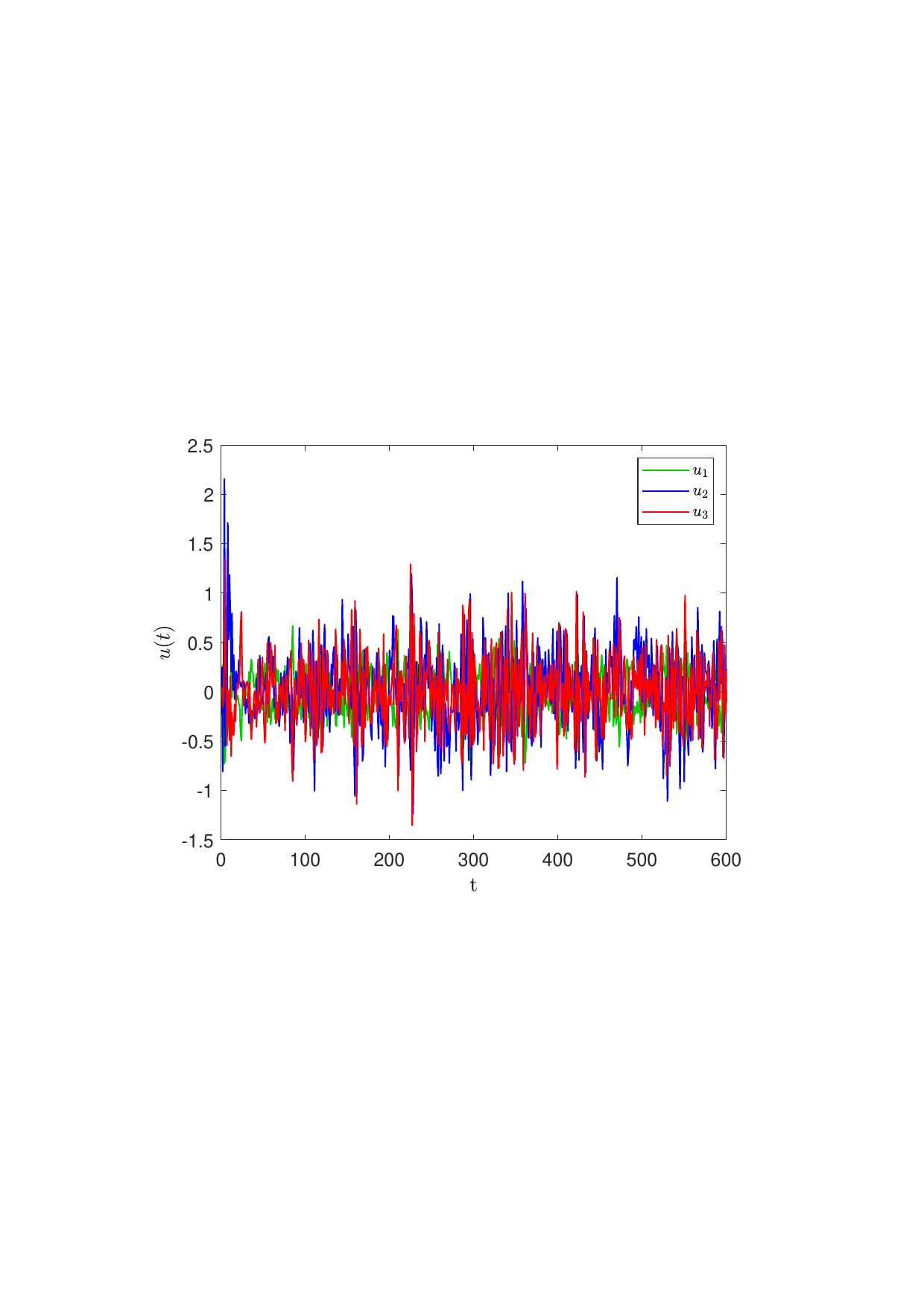}
		\label{rinput2}}
	\caption{
		State tracking errors, norms of the matching error and input signals in $ \mathcal{S}_1 $ and $ \mathcal{S}_2 $.
	}
	\label{rerror}
\end{figure*}

\begin{figure*}[!b]
	\hrulefill
	\begin{align}
	\label{rS1}
	X(6) &= \begin{bmatrix}
	0.4515 &  0.5915 & -0.5977 &  1.4086 & -0.4482 & -0.7842 &  0.9521 \\
	-1.1013 &  0.3756 & -0.5818 &  0.4610 &  0.9848 &  0.0235 & -1.6123 \\
	-0.1247 &  0.1476 &  0.2960 &  2.0724 &  1.3014 & -0.2891 &  0.2497 \\
	0.9257 &  2.2021 &  1.9527 &  2.1648 &  1.9960 & -0.0567 &  0.1607
	\end{bmatrix},
	\\
	\label{rS2}
	X(6) &= \begin{bmatrix}
	-0.8873 &  0.4737 & -0.2669 & -0.2928 &  0.8949 & -0.6439 & -0.2982 \\
	-0.1354 & -0.1576 &  0.1929 & -0.5193 &  0.1309 &  0.2239 &  1.1206 \\
	-0.4604 &  0.2365 & -0.4934 & -0.4175 &  0.9198 & -0.5817 &  0.7704 \\
	0.3136 & -0.0666 &  0.2349 & -0.1310 &  0.5287 &  0.5892 & -0.5182
	\end{bmatrix},
	\end{align}
\end{figure*}

Two simulations $ \mathcal{S}_1 $ and $ \mathcal{S}_2 $ with reference input signals drawn from the normal distribution give the following results.
The simulation $ \mathcal{S}_1 $ stops at $ t=683 $, providing the final control gains
\begin{align*}
\hat{K}&=\begin{bmatrix}
0.4429 &  0.3649 &  1.0418 &  0.4429 \\
0.3714 &  0.7567 & -0.0279 &  0.3714 \\
0.1857 &  0.8784 & -1.0139 &  0.1857
\end{bmatrix}\!,\\
\hat{L}&=\begin{bmatrix}
0.9638 & -2.0056 & -1.4847 \\
0.3575 & -0.3296 & -0.3435 \\
-0.3213 &  1.3352 &  0.8282
\end{bmatrix}\!.
\end{align*}
If we use the (unavailable) knowledge of $(A_{\rm s},B_{\rm s})$ one can verify that such gains correspond to a matching error\footnote{The matching error is calculated before rounding to four decimals.} 
$$||A_{\rm s}+B_{\rm s}\hat{K}-A_{\rm m},B_{\rm s}\hat{L}-B_{\rm m}||=1.0897\times10^{-5}.$$
The simulation $ \mathcal{S}_2 $  stops at $ t=686 $, providing the final control gains
\begin{align*}
\hat{K}&=\begin{bmatrix}
-0.5377 & -1.2656 &  0.3802 & -0.5377 \\
1.0252 &  1.8437 &  0.4132 &  1.0252 \\
0.5126 &  1.4219 & -0.7934 &  0.5126
\end{bmatrix}\!,\\%[1ex]
\hat{L}&=\begin{bmatrix}
-0.3477 & -0.0325 &  0.1576 \\
1.2318 & -1.6450 & -1.4384 \\
0.1159 &  0.6775 &  0.2808
\end{bmatrix}
\end{align*}
with matching error $1.5017\times10^{-5}$.
For $ \mathcal{S}_1 $ and $ \mathcal{S}_2 $, we obtain $ X(6) $ in \eqref{rS1} and \eqref{rS2}, respectively.
Figure \ref{rerror} shows the state tracking errors, the norms of the matching error and the input signals for $ \mathcal{S}_1 $ and $ \mathcal{S}_2 $.

\begin{figure*}[htbp]
	\centering
	\subfigure[State tracking errors in $ \mathcal{S}_3 $]
	{\includegraphics[trim=90bp 266bp 100bp 280bp, clip, height=4.26cm]{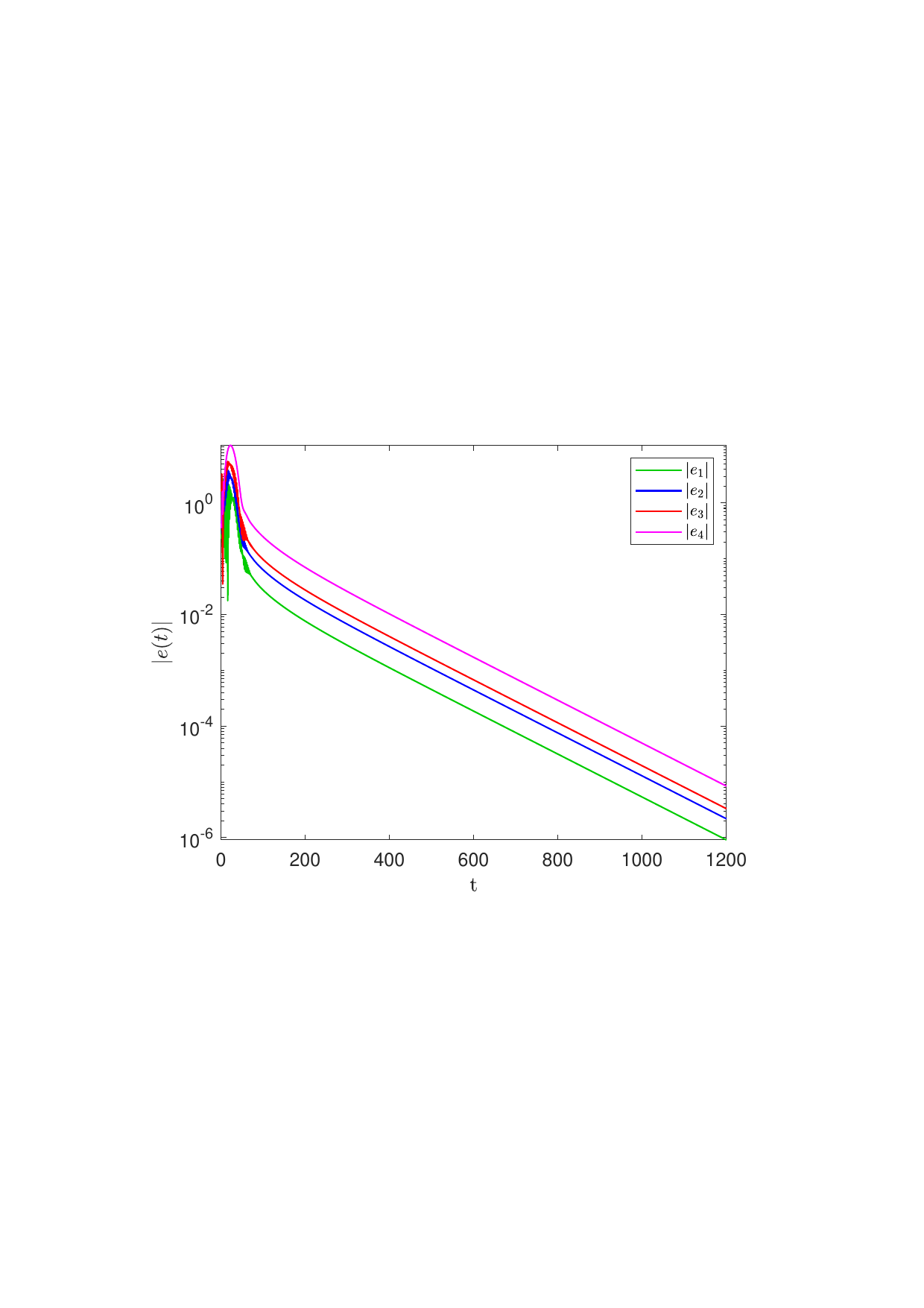}}
	\subfigure[Norm of matching error in $ \mathcal{S}_3 $]
	{\includegraphics[trim=90bp 266bp 100bp 280bp, clip, height=4.26cm]{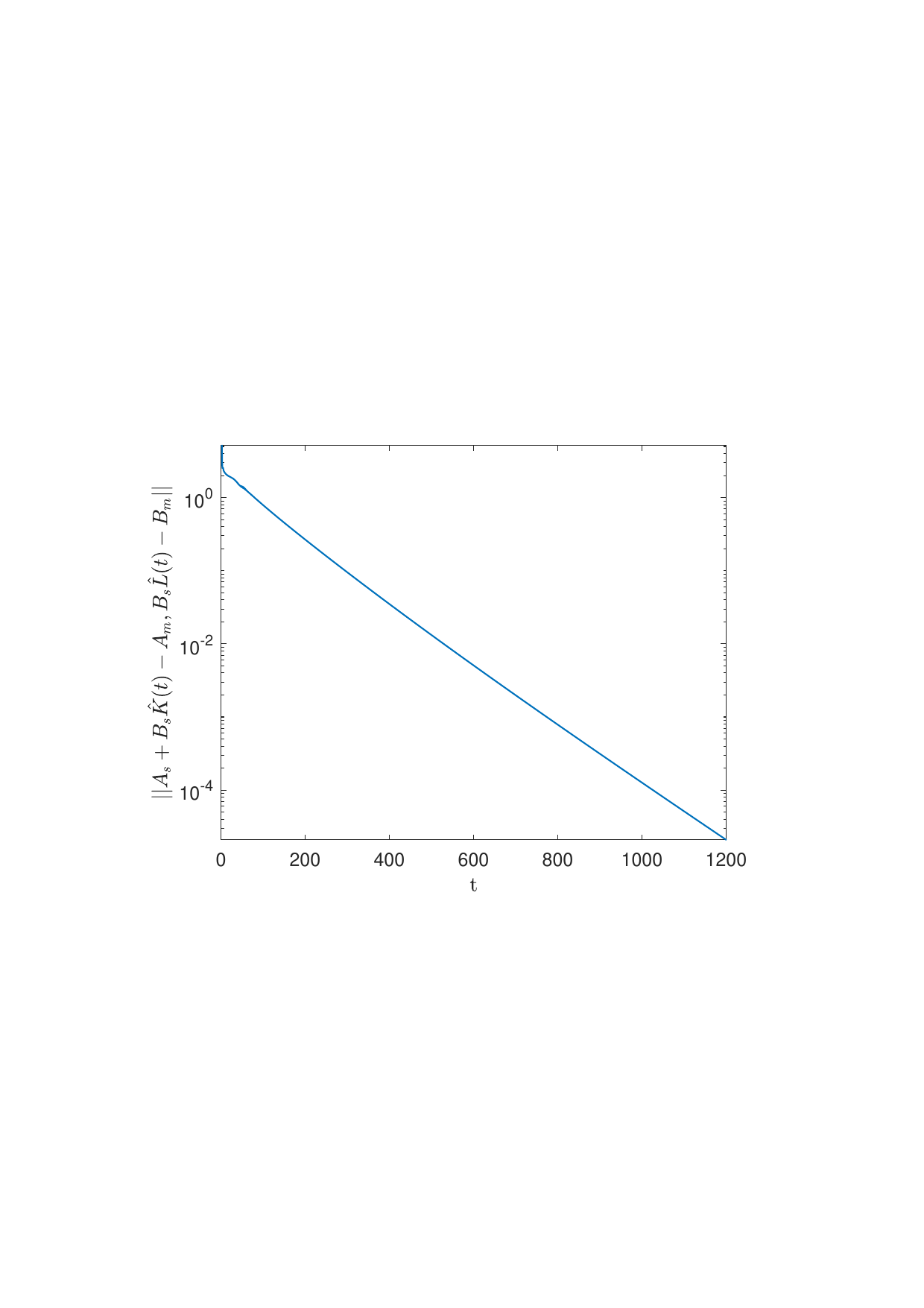}}
	\subfigure[Input signals in $ \mathcal{S}_3 $]
	{\includegraphics[trim=90bp 266bp 100bp 280bp, clip, height=4.26cm]{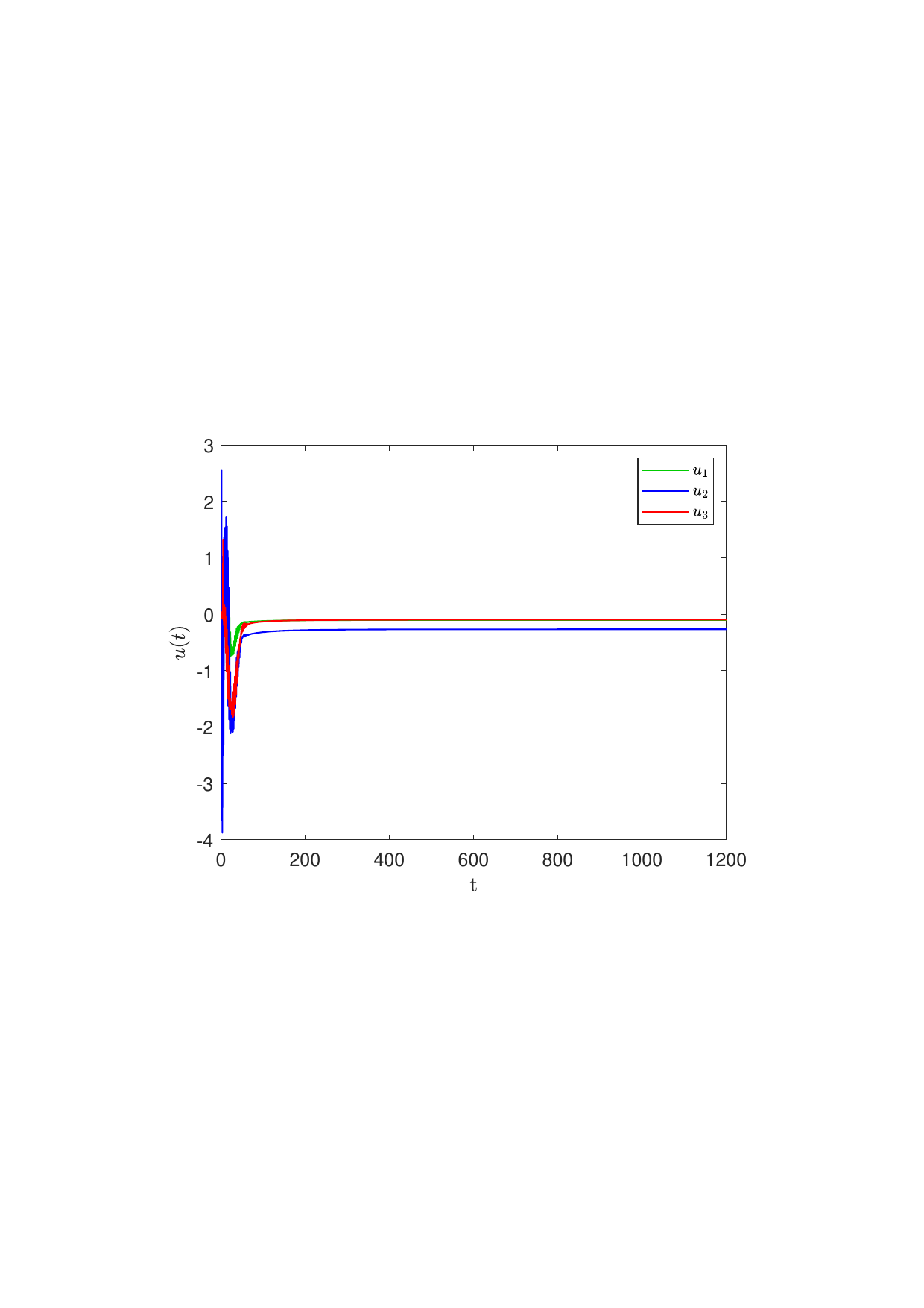}
		\label{rinput3}}
	\subfigure[State tracking errors in $ \mathcal{S}_4 $]
	{\includegraphics[trim=90bp 266bp 100bp 280bp, clip, height=4.26cm]{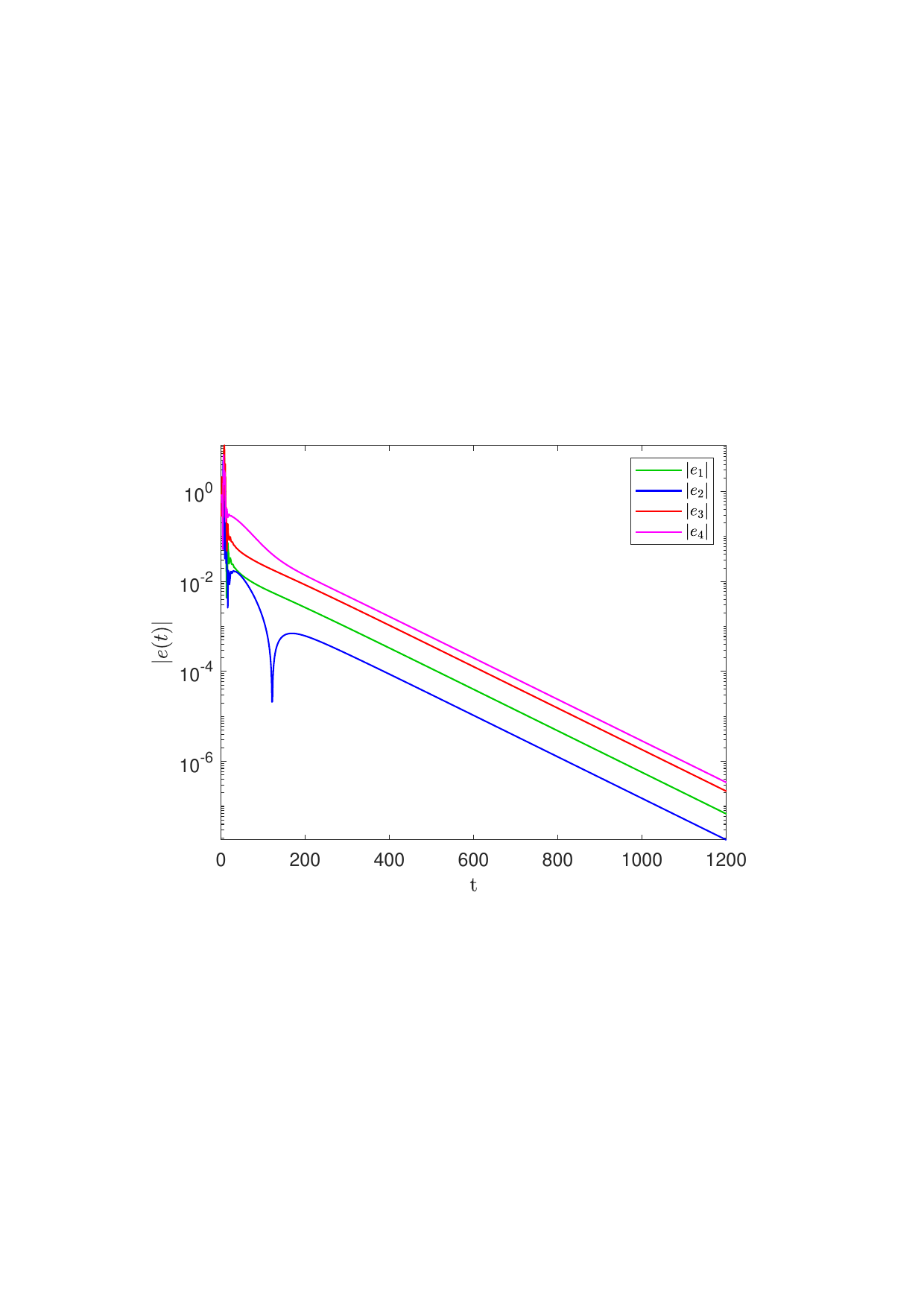}}
	\subfigure[Norm of matching error in $ \mathcal{S}_4 $]
	{\includegraphics[trim=90bp 266bp 100bp 280bp, clip, height=4.26cm]{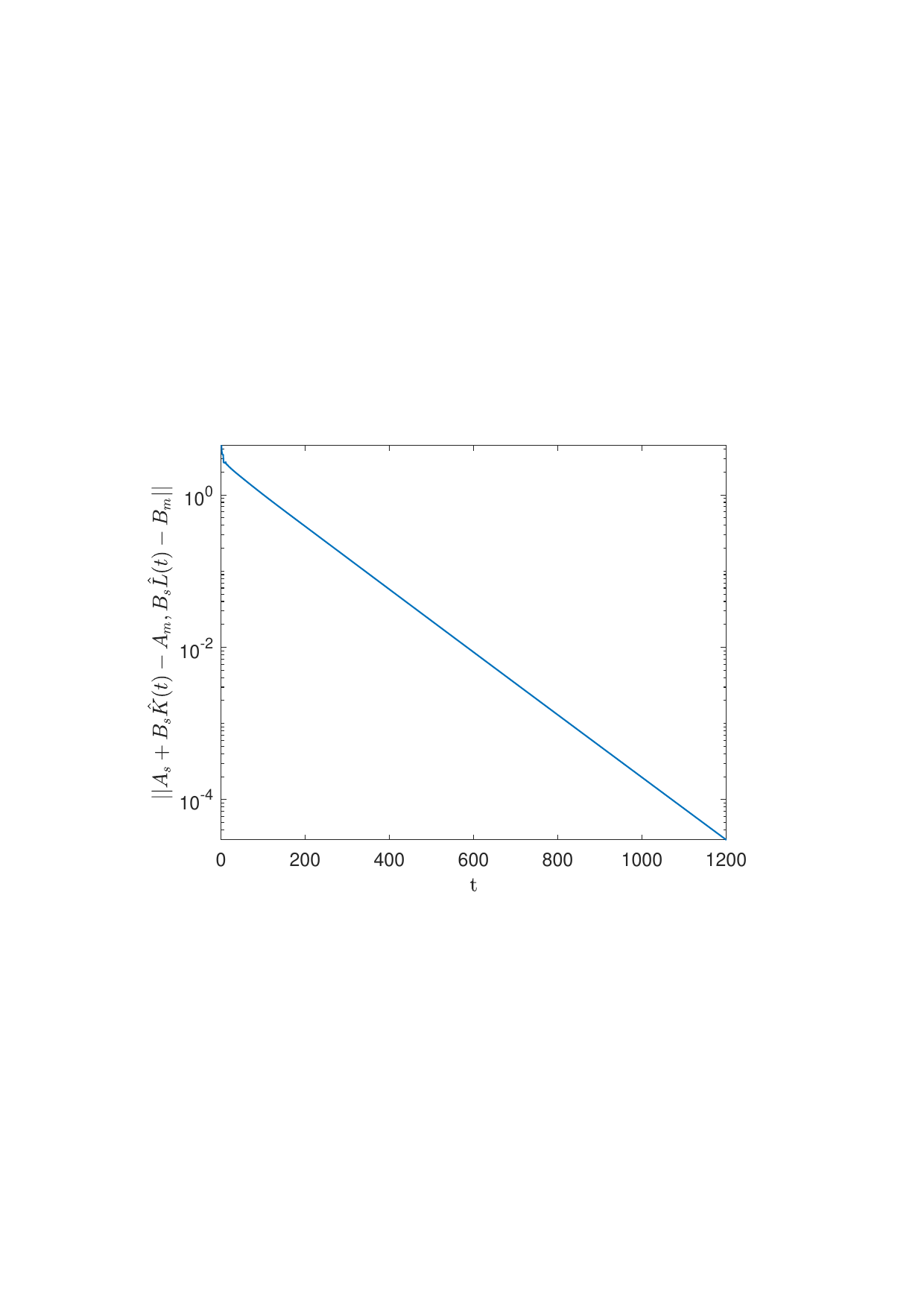}}
	\subfigure[Input signals in $ \mathcal{S}_4 $]
	{\includegraphics[trim=90bp 266bp 100bp 280bp, clip, height=4.26cm]{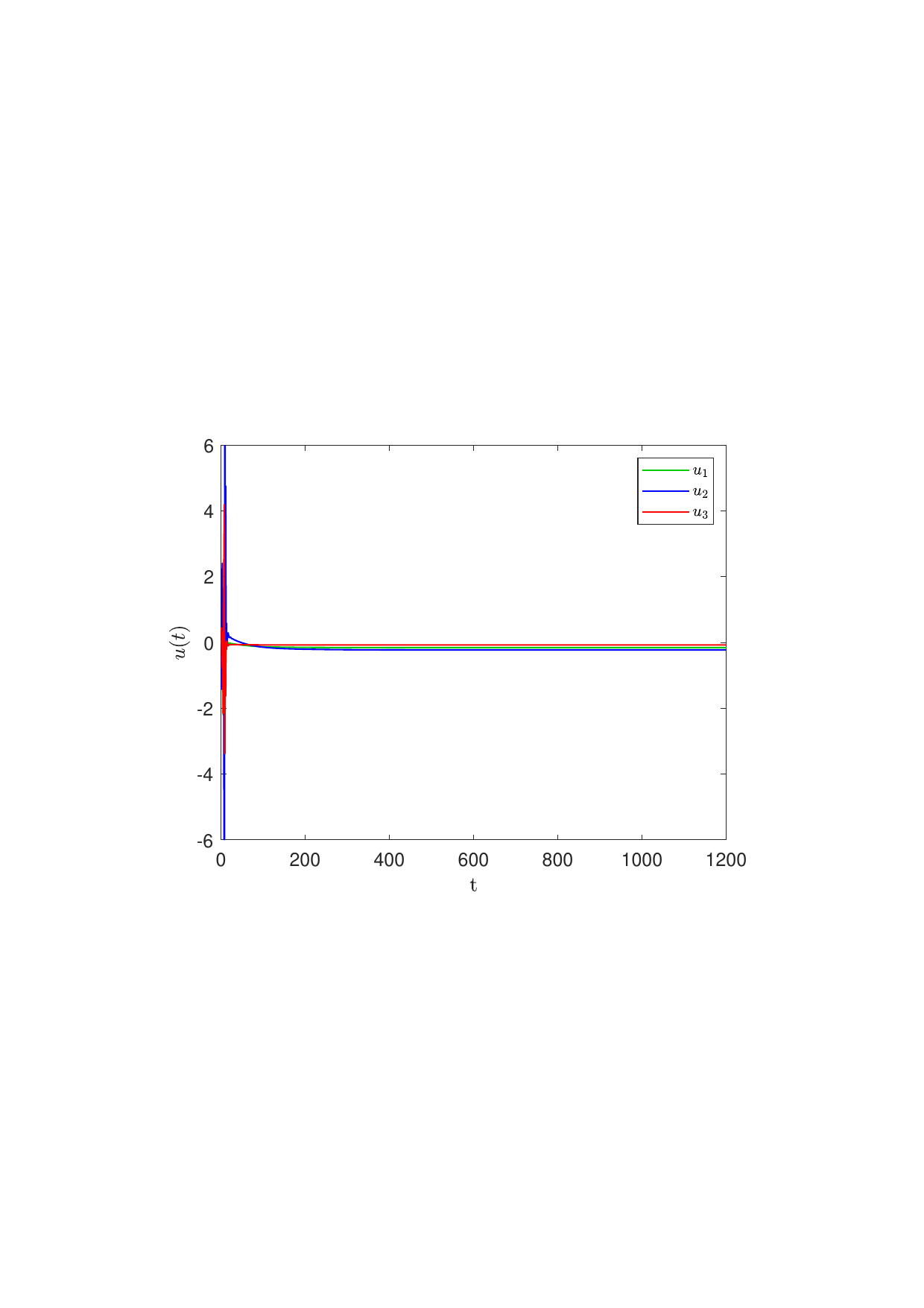}
		\label{rinput4}}
	\caption{
		State tracking errors, norms of the matching error and input signals in $ \mathcal{S}_3 $ and $ \mathcal{S}_4 $.
	}
	\label{rerrorc}
\end{figure*}

\begin{figure*}[!b]
	\hrulefill
	\begin{align}
	\label{rS3}
	X(6) &= \begin{bmatrix}
	-0.8962 &  0.8273 &  0.1253 & -1.7994 &  0.2101 &  1.5451 & -2.6355 \\
	0.2298 &  0.4897 &  0.6467 &  0.5691 & -1.8402 &  1.9382 &  0.5407 \\
	0.9684 &  1.4334 &  1.5077 & -2.9396 &  0.1314 &  0.2842 &  0.0059 \\
	1.1103 &  0.1631 &  0.2554 & -1.5738 & -2.2694 & -0.1277 & -1.5966
	\end{bmatrix},
	\\
	\label{rS4}
	X(6) &= \begin{bmatrix}
	0.1246 & -0.3486 & -0.7091 &  1.1875 &  0.4900 & -1.6225 &  2.1005 \\
	-0.3700 &  0.4088 &  0.4716 & -0.3069 &  0.6262 & -0.1266 & -4.5740 \\
	-1.1252 & -0.2829 & -0.3612 &  2.4710 & -0.2162 & -1.5610 & -0.0264 \\
	-0.2358 &  0.0883 & -0.9631 & -0.9351 &  0.1379 & -0.7048 &  0.4694
	\end{bmatrix},
	\end{align}
\end{figure*}

Two simulations $ \mathcal{S}_3 $ and $ \mathcal{S}_4 $ with constant reference input $ r(t)=[0.1\ 0.1\ 0.1\ 0.1]^{\top}\ \forall t\in \mathbb{Z}_+ $ give the following results.
%By choosing the stopping criterion \eqref{epsilon} with $ \varepsilon = 10^{-10} $, 
The simulation $ \mathcal{S}_3 $ stops at $ t=1260 $, providing the final control gains
\vspace{0.6ex}
\begin{align*}
\hat{K}&=\begin{bmatrix}
0.1969 &  0.2704 &  0.2468 &  0.1969 \\
0.5354 &  0.8197 &  0.5021 &  0.5354 \\
0.2677 &  0.9099 & -0.7489 &  0.2677
\end{bmatrix}\!,\\%[1ex]
\hat{L}&=\begin{bmatrix}
0.3203 & -0.5672 & -0.4438 \\
0.7864 & -1.2885 & -1.0375 \\
-0.1068 &  0.8557 &  0.4813
\end{bmatrix}
\end{align*}
\\
with matching error $1.2286\times10^{-5}$.
The simulation $ \mathcal{S}_4 $ stops at $ t=1265 $, providing the final control gains
\vspace{0.6ex}
\begin{align*}
\hat{K}&=\begin{bmatrix}
0.3694 &  0.9106 & -0.3437 &  0.3694 \\
0.4204 &  0.3929 &  0.8958 &  0.4204 \\
0.2102 &  0.6965 & -0.5521 &  0.2102
\end{bmatrix}\!,\\%[1ex]
\hat{L}&=\begin{bmatrix}
0.1975 &  0.1462 & -0.0257 \\
0.8683 & -1.7641 & -1.3162 \\
-0.0658 &  0.6179 &  0.3419
\end{bmatrix}
\end{align*}
\\
with matching error $1.6208\times10^{-5}$.
For $ \mathcal{S}_3 $ and $ \mathcal{S}_4 $, we obtain $ X(6) $ in \eqref{rS3} and \eqref{rS4}, respectively.
Figure \ref{rerrorc} shows the state tracking errors, the norms of the matching error and the input signals for $ \mathcal{S}_3 $ and $ \mathcal{S}_4 $.

Despite the different level of data excitation that is evident from the plots of Figure \ref{rerror} and Figure \ref{rerrorc}, we verify that the informative time is always $ T^*=6 $. 
This implies that, despite diverse data excitation conditions, $ 6 $ data samples are always sufficient to achieve data	informativity for model	reference control.

\subsection{Highly maneuverable aircraft example}

We consider the unstable longitudinal dynamics of a highly maneuverable aircraft \cite{hartmann1979control,yuan2018robust} with three states (angle of attack, pitch rate, and pitch angle) and four inputs (elevator, elevon, canard, and symmetric aileron). 
For better numerical conditioning, we scale the angle of attack by 100. 
With sampling time of 0.01s, we get a system model in the form \eqref{s}, with
\vspace{0.6ex}
\begin{align*}
A_{\rm s} &=
\begin{bmatrix}
0.9810 & 0.9831 & -0.0007 \\
0.0012 & 0.9737 & 0 \\
0 & 0.01 & 1 \\
\end{bmatrix}\!,\\
B_{\rm s} &=
\begin{bmatrix}
-0.2436 & -0.1708 & -0.0050 & -0.1997 \\
-0.4621 & -0.3160 & 0.2240 & -0.3118 \\
0 & 0 & 0 & 0 \\
\end{bmatrix}\!,
\end{align*}
\\
and a reference model in the form \eqref{r}, with
\vspace{0.6ex}
\begin{equation}\label{ambm}
A_{\rm m} = \begin{bmatrix}
0.9800 & 0.6484 & -0.7487 \\
-0.0008 & 0.2964 & -1.5178 \\
0 & 0.01 & 1 \\
\end{bmatrix}\!,\ B_{\rm m}=B_{\rm s}.
\end{equation}
\\
The choice $ B_{\rm m}=B_{\rm s} $ is based on the setting in \cite{hartmann1979control,yuan2018robust}. 
Nevertheless, we stress on the fact that $ B_{\rm s} $ is unknown: hence, even though we have $ B_{\rm m}=B_{\rm s} $ in the simulation, we do not know that $ B_{\rm m}=B_{\rm s} $ for control design.
%Note that the matching equations \eqref{mc} are satisfied, but there are an infinite number of solutions $(K,L)$, since $ B_{\rm s} $ does not have full column rank.

\begin{figure*}[htbp]
	\centering
	\subfigure[State tracking errors in $ \mathcal{S}_5 $]
	{
		\includegraphics[trim=90bp 266bp 100bp 280bp, clip, height=4.26cm]{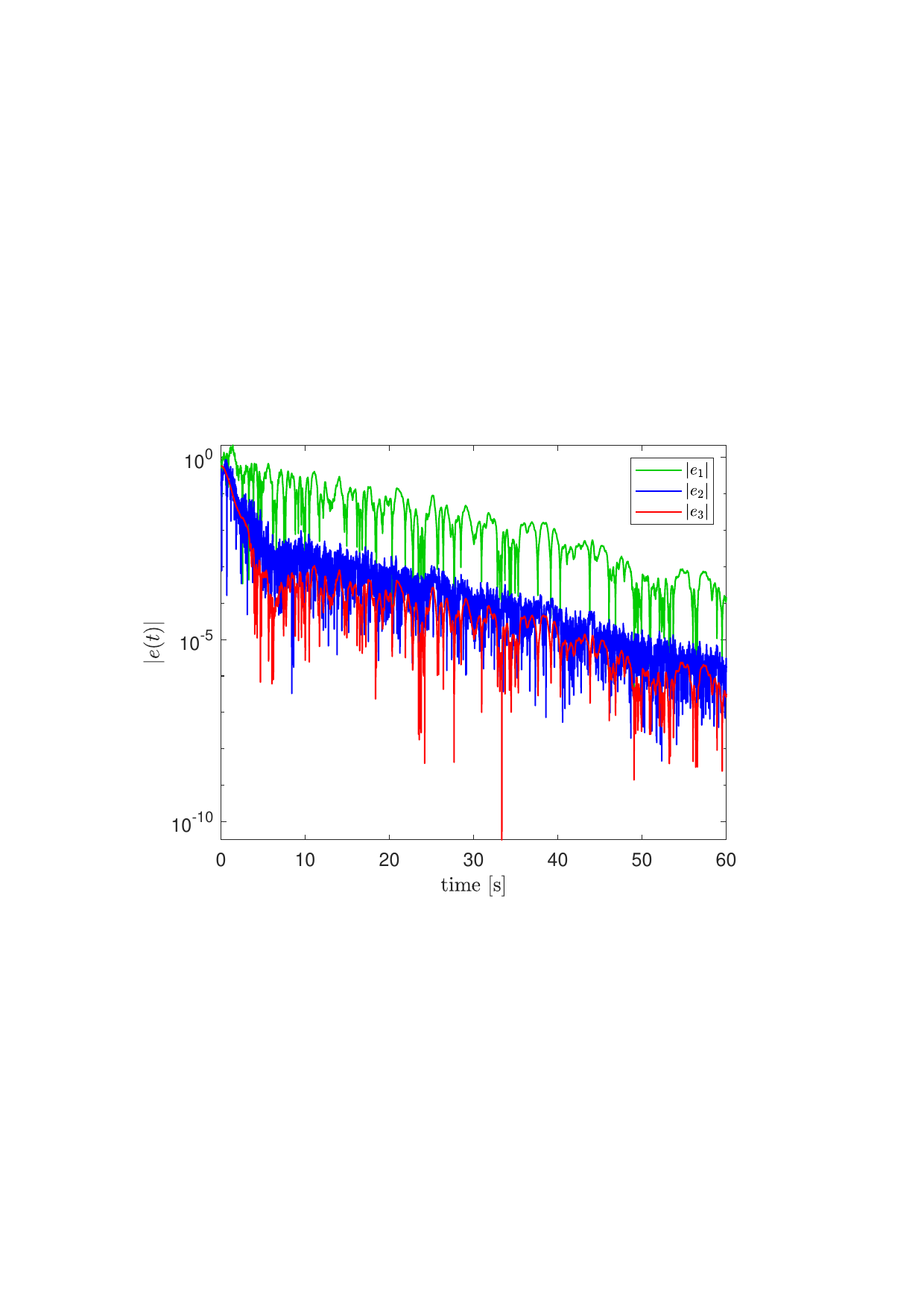}}
	\subfigure[Norm of matching error in $ \mathcal{S}_5 $]
	{
		\includegraphics[trim=90bp 266bp 100bp 280bp, clip, height=4.26cm]{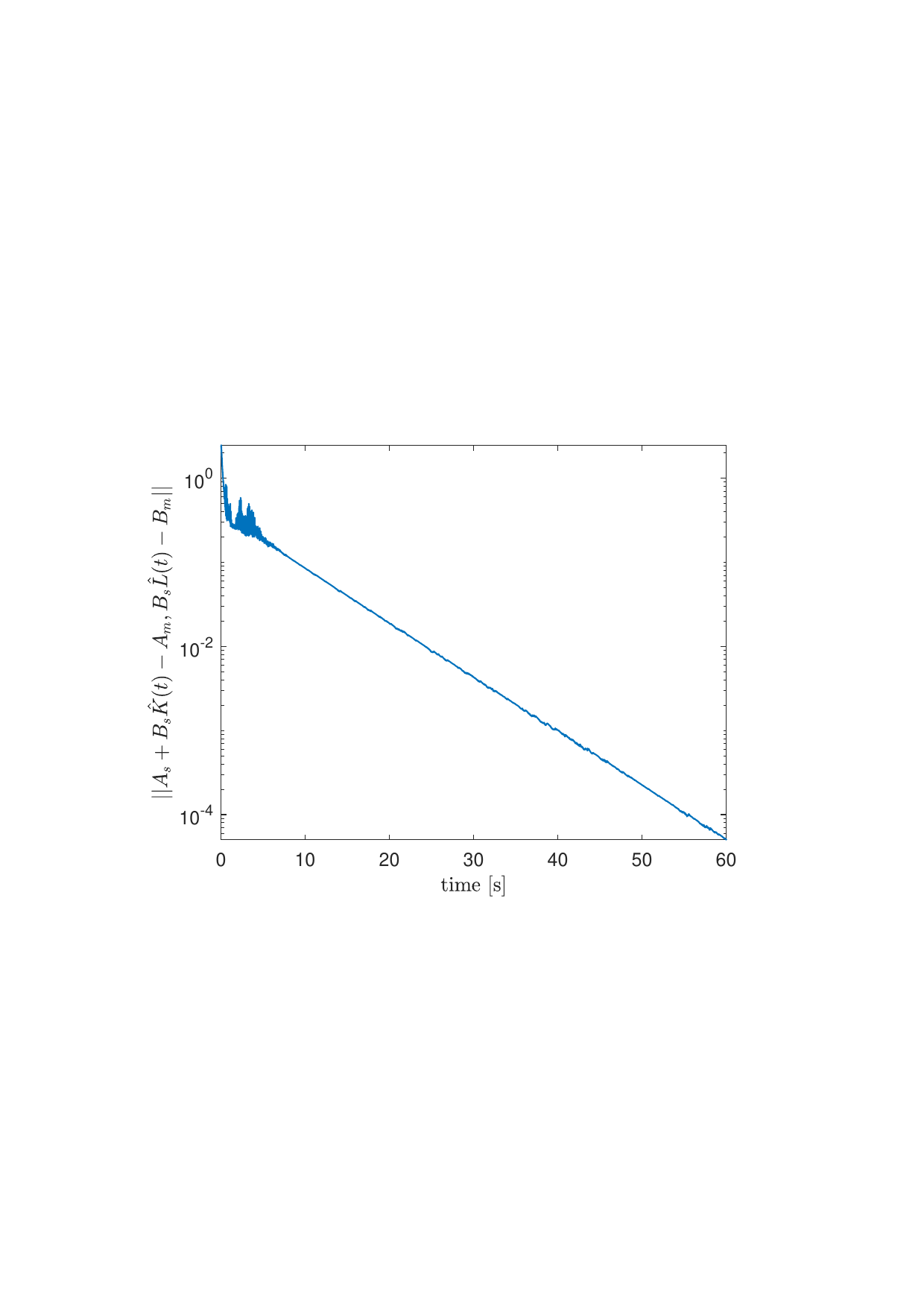}}
	\subfigure[Input signals in $ \mathcal{S}_5 $]
	{\includegraphics[trim=90bp 266bp 100bp 280bp, clip,, height=4.26cm]{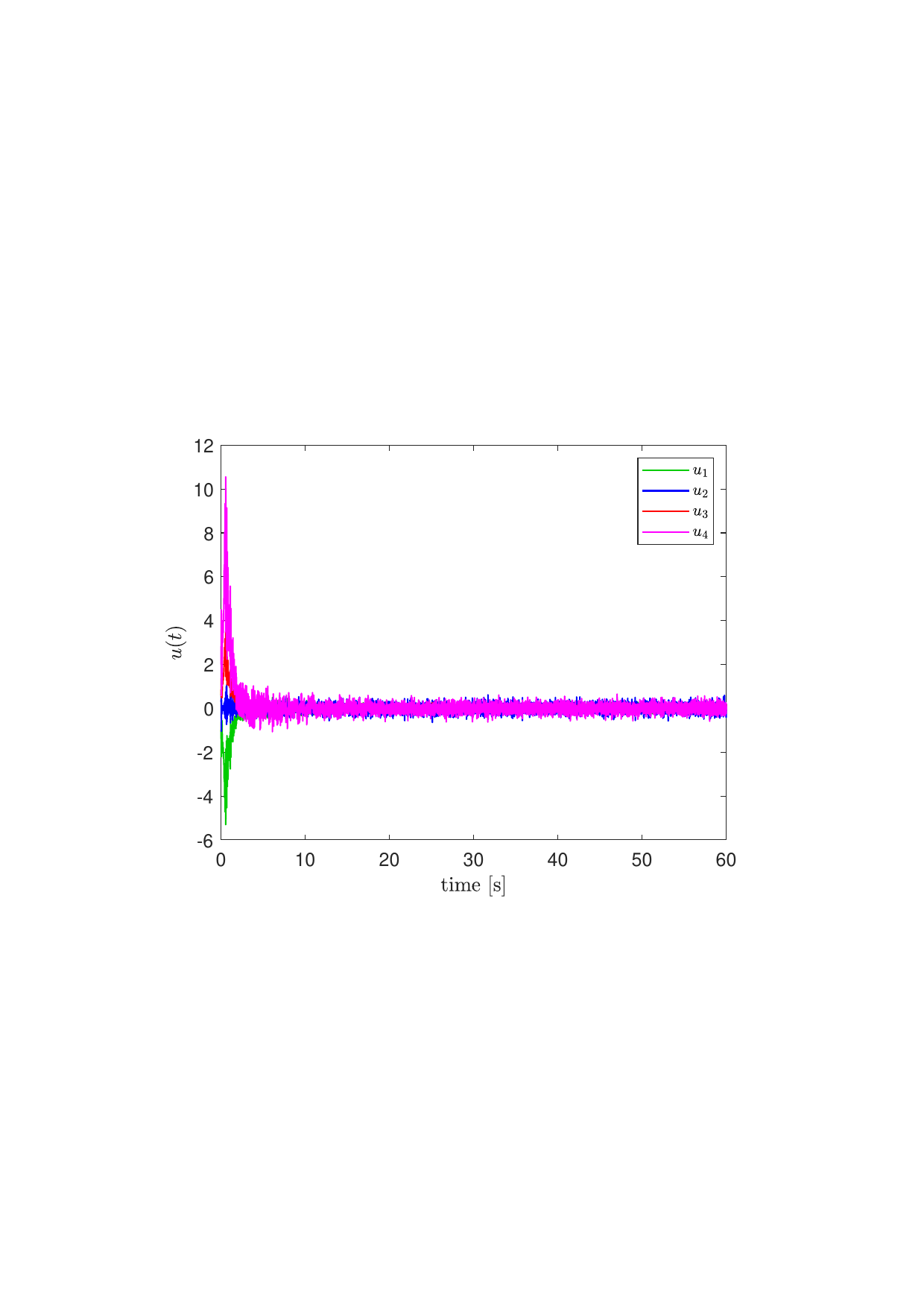}
		\label{input1}}
	\subfigure[State tracking errors in $ \mathcal{S}_6 $]
	{
		\includegraphics[trim=90bp 266bp 100bp 280bp, clip, height=4.26cm]{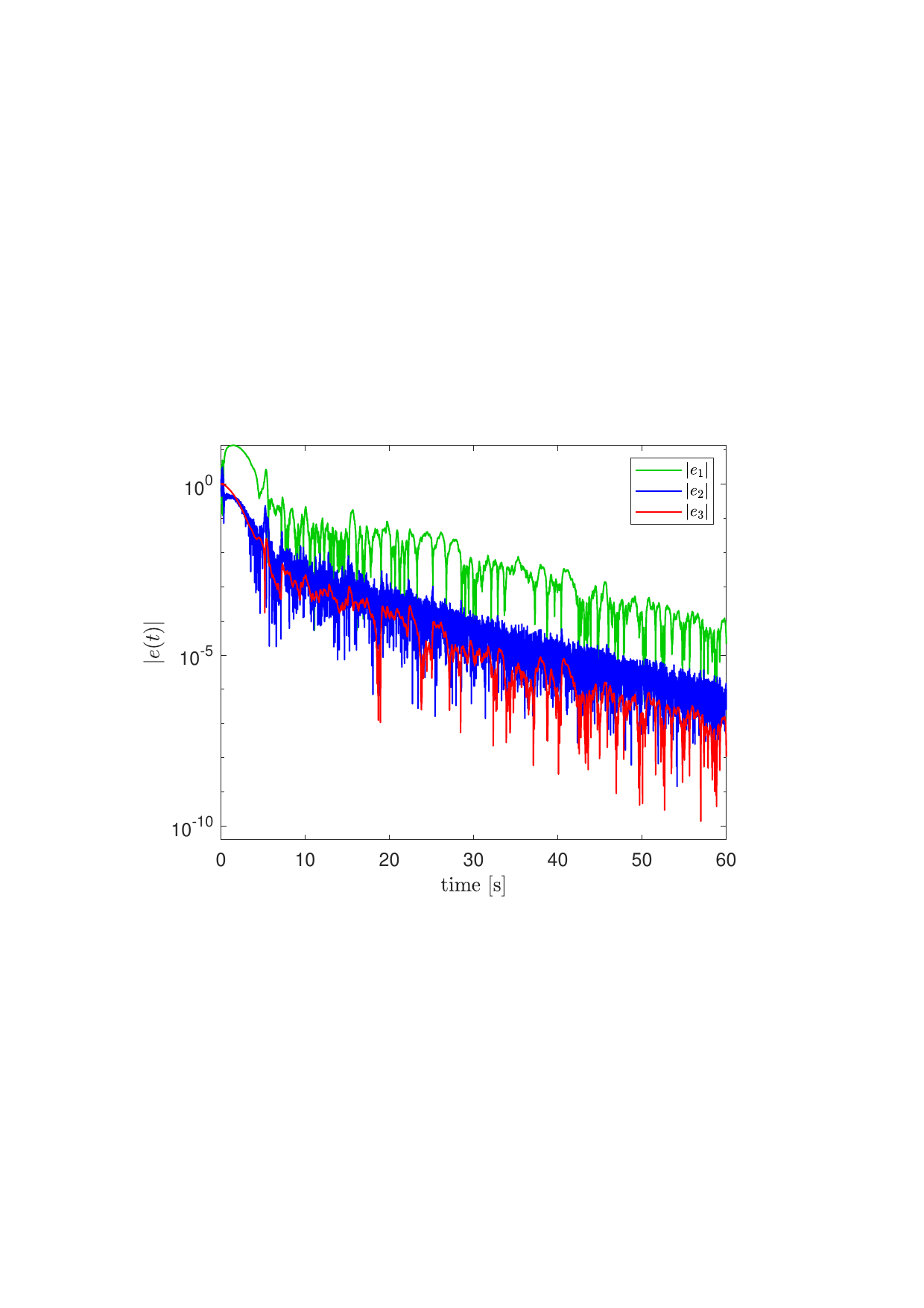}}
	\subfigure[Norm of matching error in $ \mathcal{S}_6 $]
	{
		\includegraphics[trim=90bp 266bp 100bp 280bp, clip, height=4.26cm]{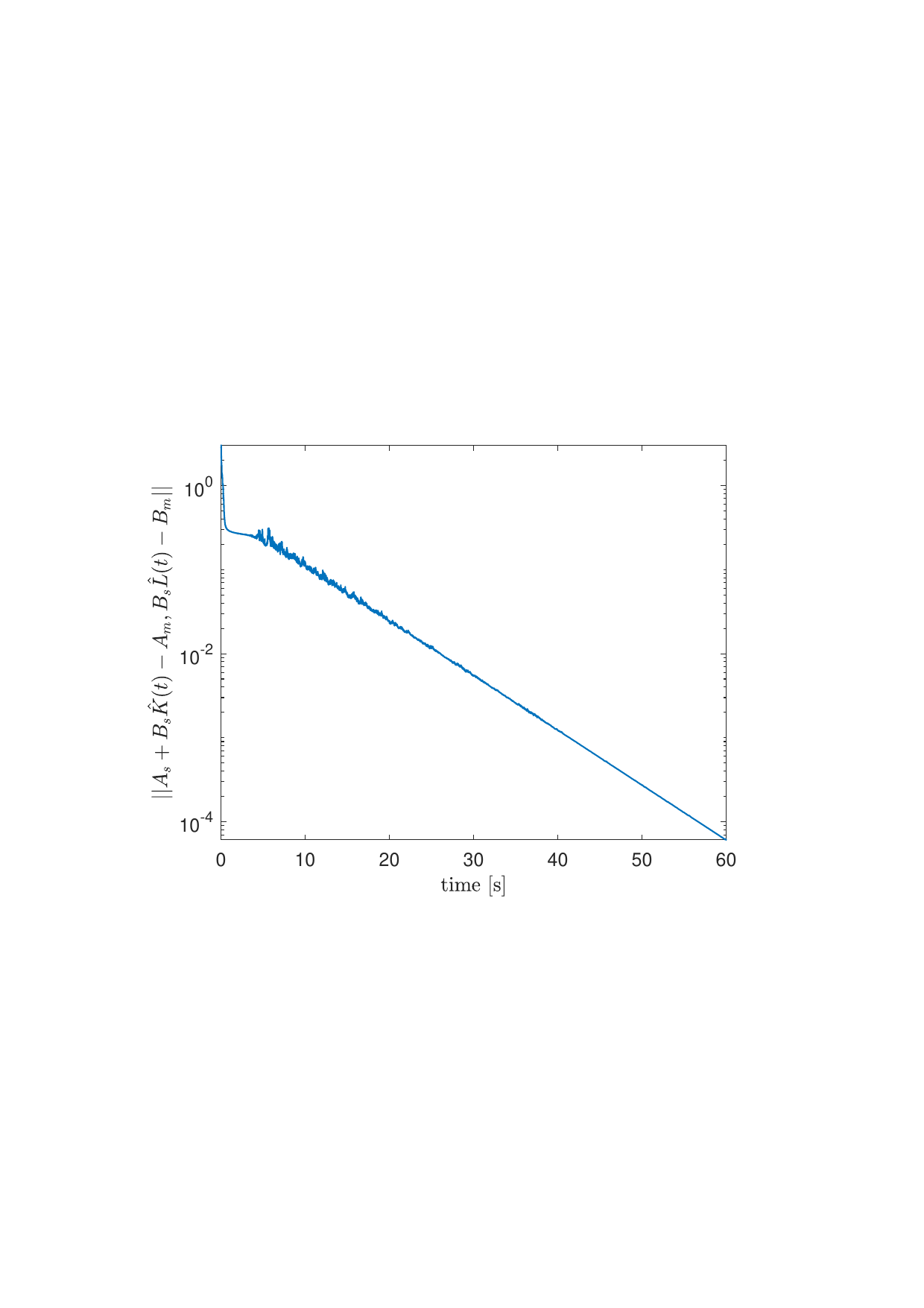}}
	\subfigure[Input signals in $ \mathcal{S}_6 $]
	{\includegraphics[trim=90bp 266bp 100bp 280bp, clip, height=4.26cm]{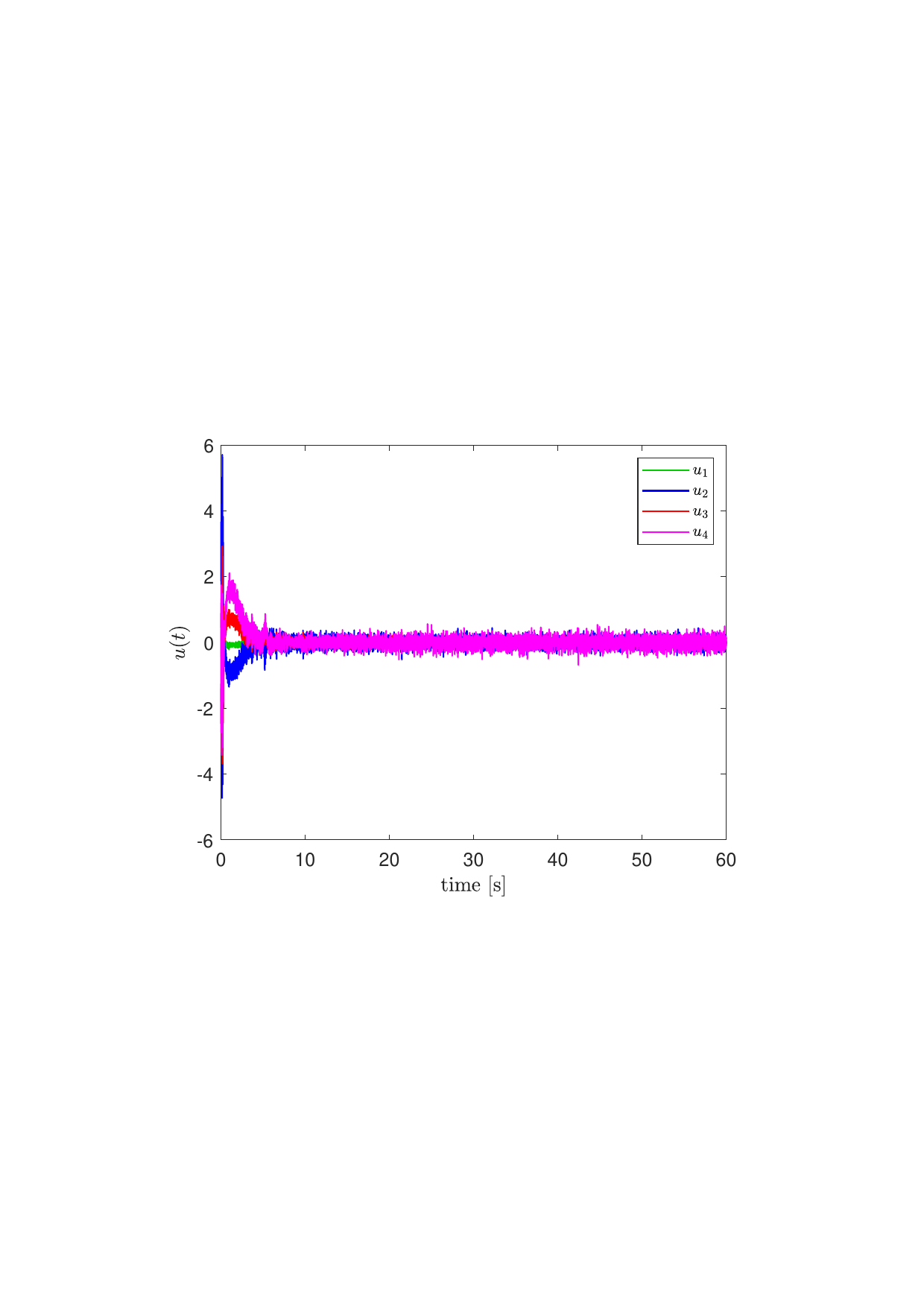}
		\label{input2}}
	\caption{State tracking errors, norms of the matching error and input signals in $ \mathcal{S}_5 $ and $ \mathcal{S}_6 $.}
	\label{error}
\end{figure*}

Two simulations $ \mathcal{S}_5 $ and $ \mathcal{S}_6 $ with reference input signals drawn from the normal distribution give the following results.
The simulation $ \mathcal{S}_5 $ stops at $ 68.39 $s, providing the final control gains
\vspace{0.6ex}
\begin{align*}
	\hat{K}&=\begin{bmatrix}
	-0.0006 & -0.0784 & -0.1616 \\
	0.0041 &  1.4191 &  3.1798 \\
	-0.0009 & -0.3930 & -0.8904 \\
	0.0023 &  0.5680 &  1.2454
	\end{bmatrix}\!,
	\\
	\hat{L}&=\begin{bmatrix}
	-0.1516 & -0.1309 & -0.7200 & -0.3301 \\
	0.9708 &  0.6646 & -0.4491 &  0.6612 \\
	-0.1996 & -0.1175 &  0.6504 &  0.0243 \\
	0.5795 &  0.4495 &  1.2712 &  0.8365
	\end{bmatrix}\!,
\end{align*}
\\
with matching error
$$||A_{\rm s}+B_{\rm s}\hat{K}-A_{\rm m},B_{\rm s}\hat{L}-B_{\rm m}||=1.4443\times10^{-5}.$$
Such matching error was below $ 10^{-3} $ at $ 40.05 $s and below $ 10^{-4} $ at $ 55.24 $s.
The simulation $ \mathcal{S}_6 $ stops at $ 68.64 $s, providing the final control gains
\vspace{0.6ex}
\begin{align*}
	\hat{K}&=\begin{bmatrix}
	0.0010 &  0.3706 &  0.8318 \\
	0.0019 &  0.7646 &  1.7275 \\
	-0.0008 & -0.3740 & -0.8491 \\
	0.0022 &  0.5795 &  1.2746
	\end{bmatrix}\!,\\%[1ex]
	\hat{L}&=\begin{bmatrix}
	0.2440 &  0.1644 & -0.1896 &  0.1442 \\
	0.4223 &  0.2613 & -1.0074 &  0.0546 \\
	-0.1768 & -0.0992 &  0.7188 &  0.0624 \\
	0.5654 &  0.4337 &  1.0998 &  0.7757
	\end{bmatrix}
\end{align*}
\\
with matching error $1.6612\times10^{-5}$.
Such error was below $ 10^{-3} $ at $ 41.38 $s and below $ 10^{-4} $ at $ 56.66 $s.
	%Despite the convergence to different values, as expected from the fact that the matching equations have no unique solution, both achieve model reference control.
We also verify that the informative time is $ T^*=5 $ in both $ \mathcal{S}_5 $ and $ \mathcal{S}_6 $, which implies that $ 5 $ data samples are	sufficient to achieve	informativity for model	reference	control.
In particular, in $ \mathcal{S}_5 $, we obtain the following set of state data
	{
	\setlength{\arraycolsep}{2pt}
	\begin{align*}
	&X(5) = \\
	&\!\begin{bmatrix}
	0.3200 &  0.6376 &  0.8311 &  0.6256 &  0.4428 &  0.5105 \\
	0.5871 &  0.4239 &  0.2900 &  0.0060 & -0.0968 &  0.4126 \\
	-0.5974 & -0.5915 & -0.5873 & -0.5844 & -0.5843 &  -0.5853
	\end{bmatrix}\!,
	\end{align*}\\
	while in $ \mathcal{S}_6 $, we obtain
	\begin{align*}
	&X(5)= \\
	&\!\begin{bmatrix}
	0.7030 &  1.2024 &  0.8685 & -0.2838 & -0.9618 & -1.0249 \\
	0.6777 & -0.0262 & -1.2978 & -0.6420 & -0.8612 & 0.4623 \\
	-1.0029 & -0.9961 & -0.9964 & -1.0093 & -1.0158 & -1.0244
	\end{bmatrix}\!.
	\end{align*}\\
}Figure \ref{error} shows the state tracking errors, the norms of the matching error and the input signals for $ \mathcal{S}_5 $ and $ \mathcal{S}_6 $.

\begin{figure*}[htbp]
	\centering
	\subfigure[State tracking errors in $ \mathcal{S}_7 $]
	{\includegraphics[trim=90bp 266bp 100bp 280bp, clip, height=4.26cm]{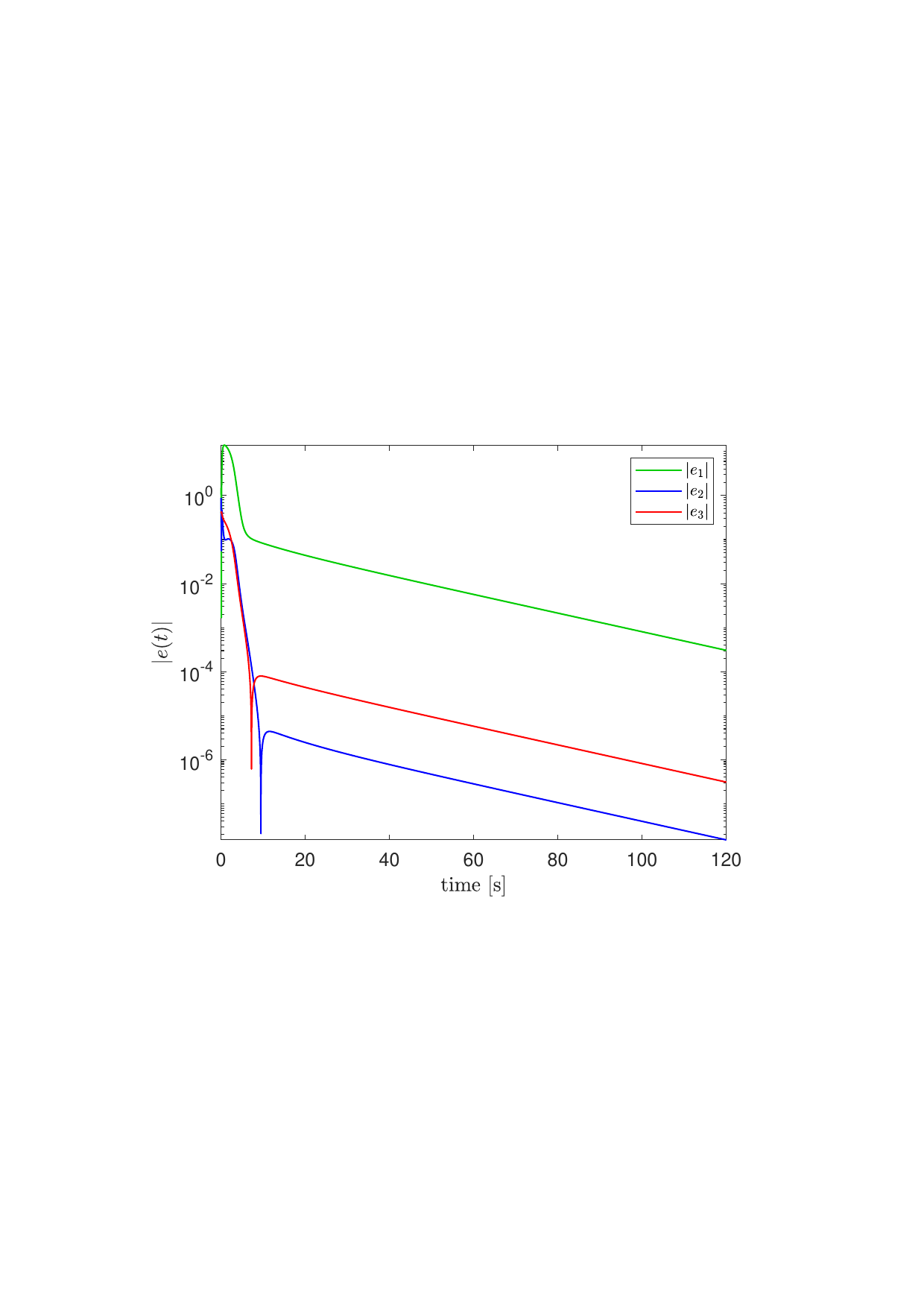}}
	\subfigure[Norm of matching error in $ \mathcal{S}_7 $]
	{\includegraphics[trim=90bp 266bp 100bp 280bp, clip, height=4.26cm]{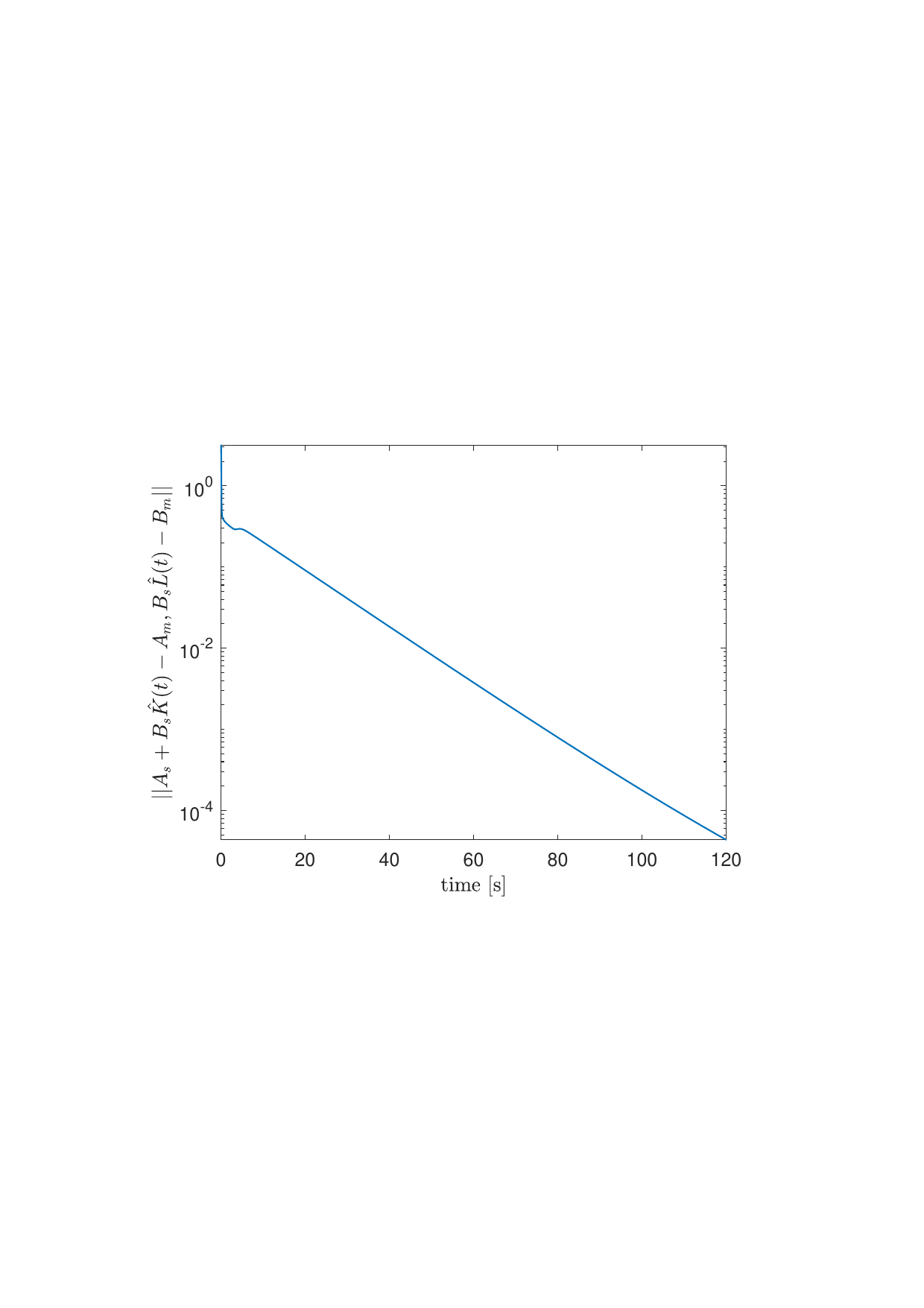}}
	\subfigure[Input signals in $ \mathcal{S}_7 $]
	{\includegraphics[trim=90bp 266bp 100bp 280bp, clip, height=4.26cm]{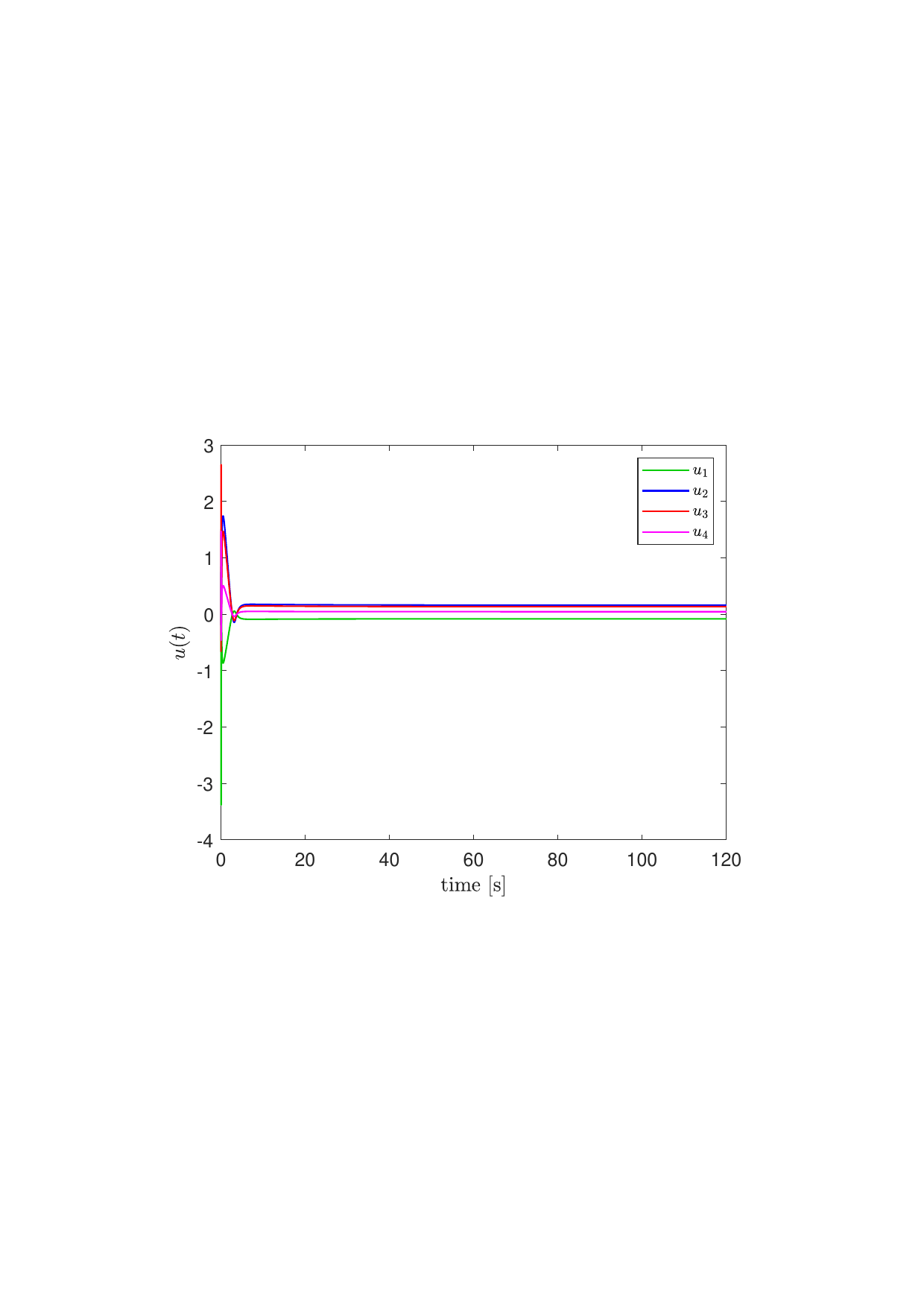}
		\label{input3}}
	\subfigure[State tracking errors in $ \mathcal{S}_8 $]
	{\includegraphics[trim=90bp 266bp 100bp 280bp, clip, height=4.26cm]{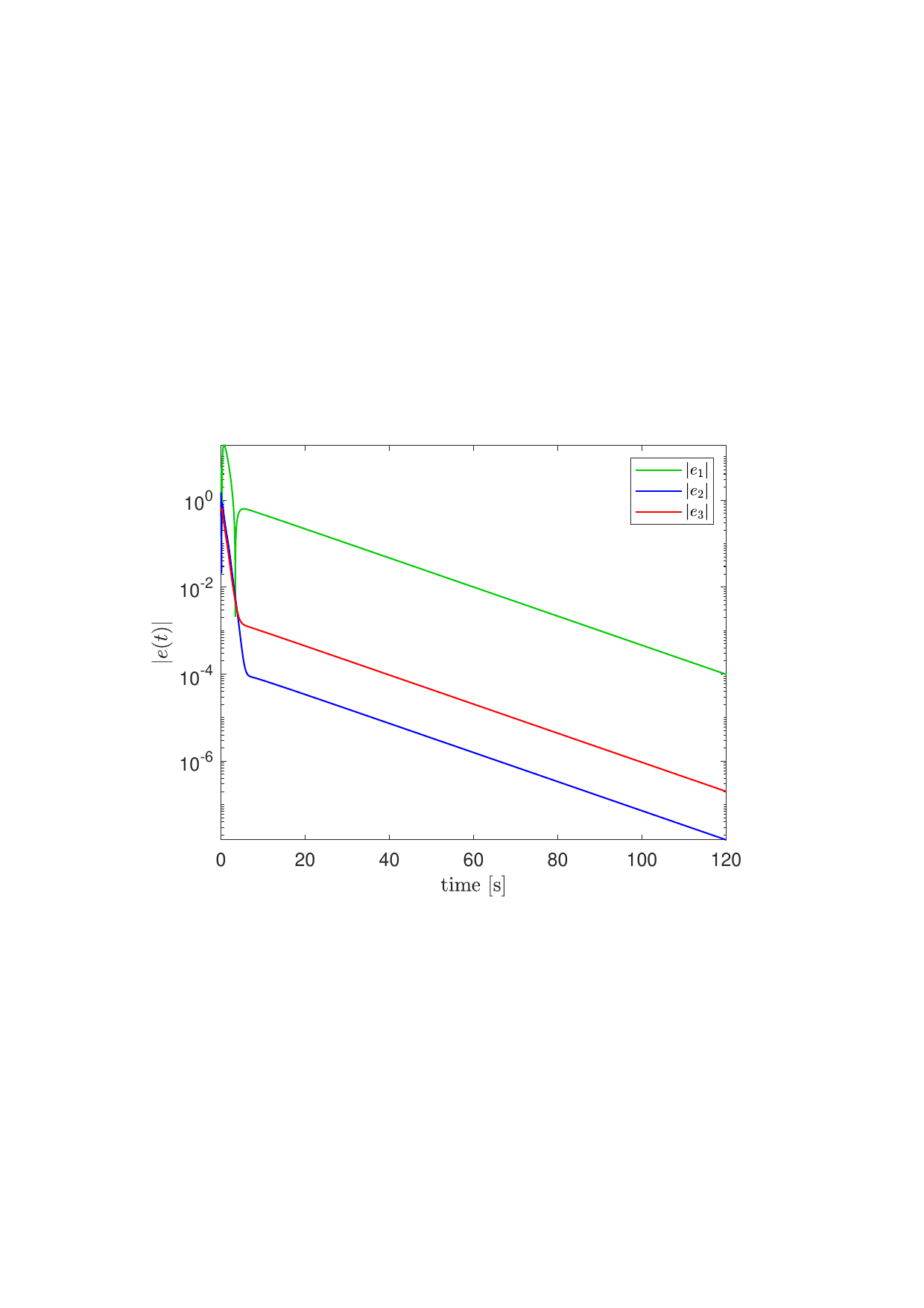}}
	\subfigure[Norm of matching error in $ \mathcal{S}_8 $]
	{\includegraphics[trim=90bp 266bp 100bp 280bp, clip, height=4.26cm]{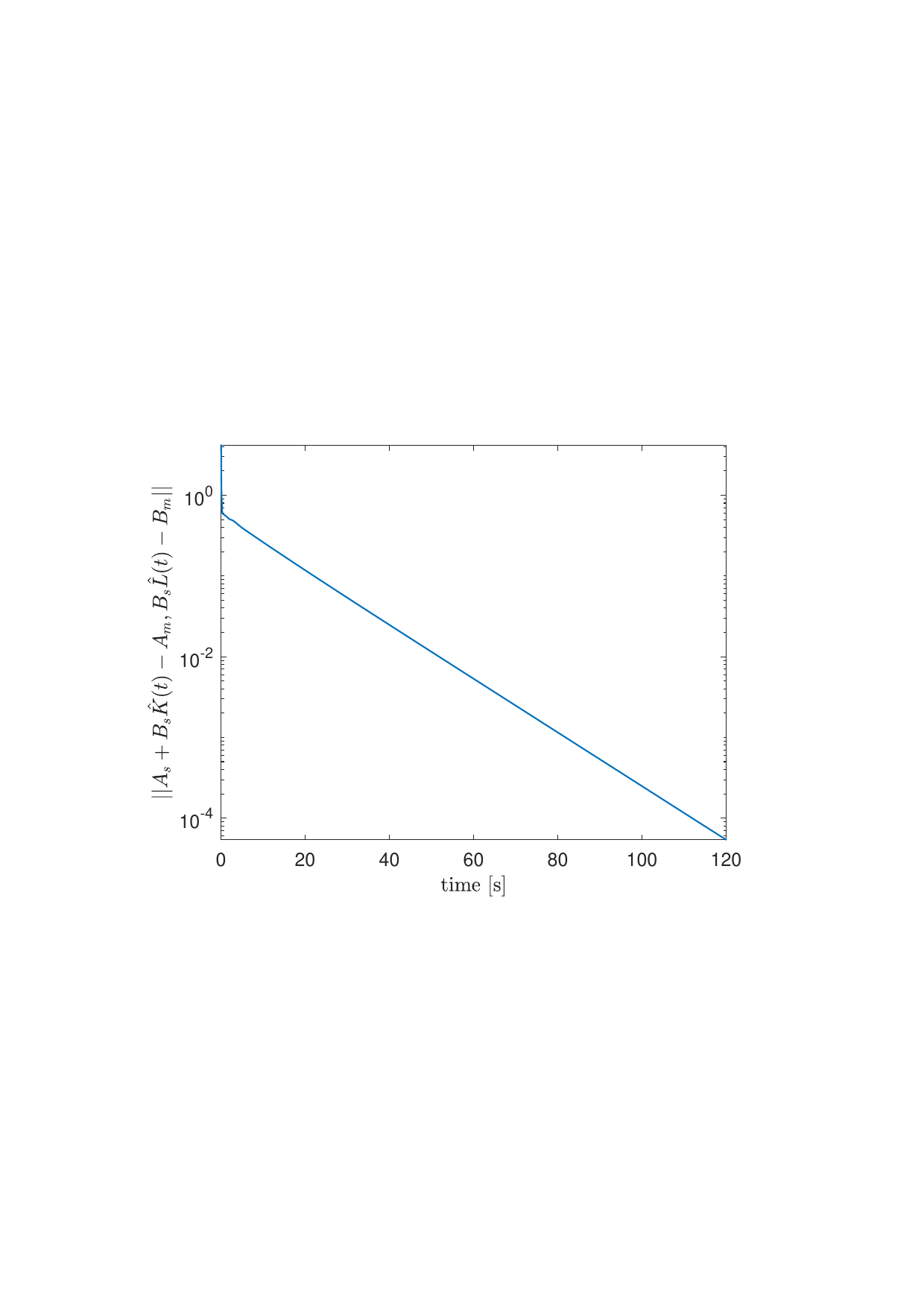}}
	\subfigure[Input signals in $ \mathcal{S}_8 $]
	{\includegraphics[trim=90bp 266bp 100bp 280bp, clip, height=4.26cm]{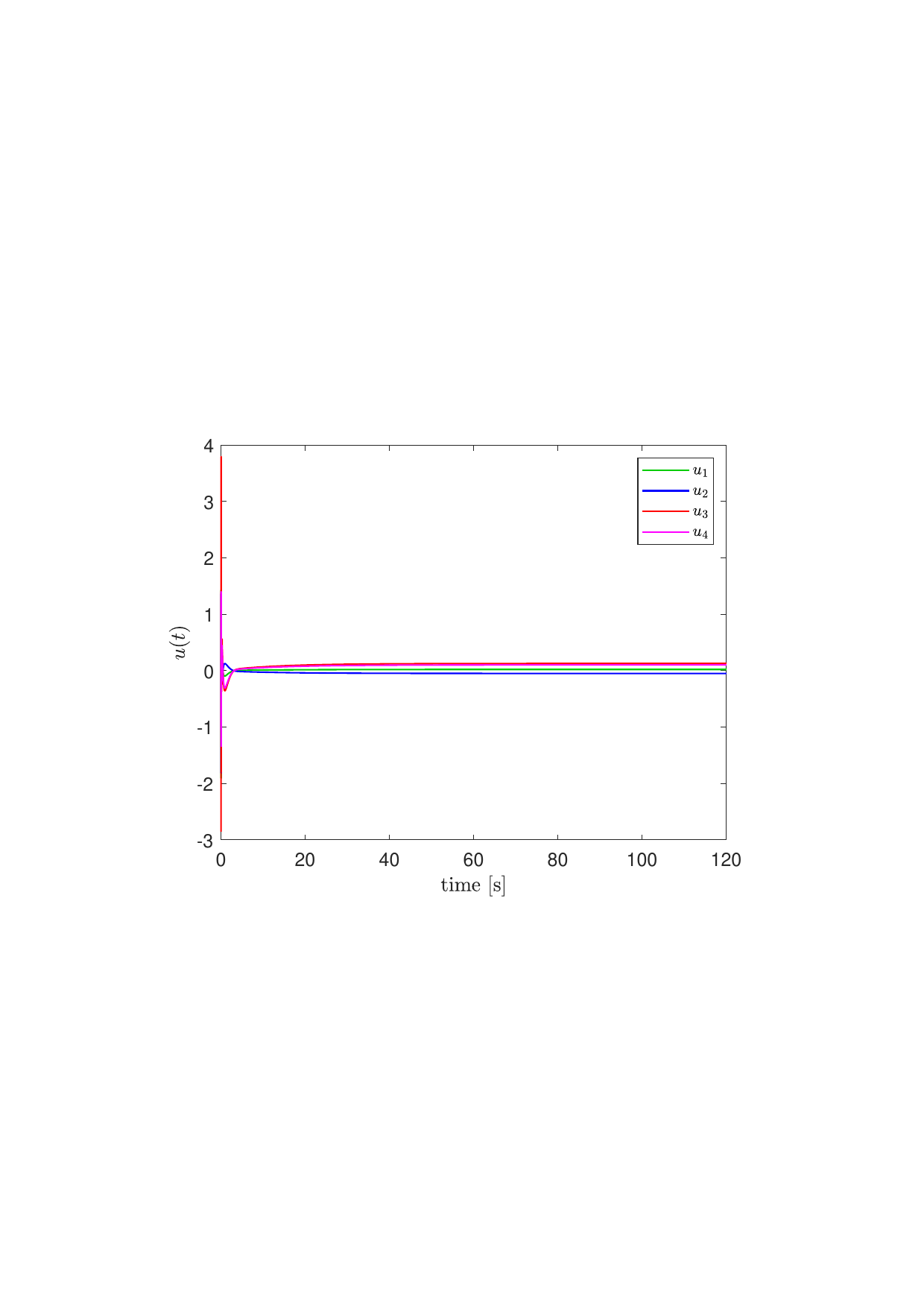}
		\label{input4}}
	\caption{State tracking errors, norms of the matching error and input signals in $ \mathcal{S}_7 $ and $ \mathcal{S}_8 $.}
	\label{errorc}
\end{figure*}

The two simulations $ \mathcal{S}_7 $ and $ \mathcal{S}_8 $ with constant reference input $ r(t)=[0.1\ 0.1\ 0.1\ 0.1]^{\top}\ \forall t\in \mathbb{Z}_+ $ give the following results.
%By choosing the stopping criterion \eqref{epsilon} with $ \varepsilon = 10^{-10} $, 
The simulation $ \mathcal{S}_7 $ stops at $ 135.02 $s, providing the final control gains
\vspace{0.6ex}
\begin{align*}
	\hat{K}&=\begin{bmatrix}
	0.0025 &  0.9383 &  2.1114 \\
	0.0032 &  0.9843 &  2.1872 \\
	-0.0001 & -0.1270 & -0.3000 \\
	-0.0008 & -0.3070 & -0.6931
	\end{bmatrix}\!,\\%[1ex]
	\hat{L}&=\begin{bmatrix}
	0.5782 &  0.3782 & -0.7808 &  0.2464 \\
	0.8026 &  0.5852 &  0.6704 &  0.8457 \\
	0.0220 &  0.0451 &  0.8672 &  0.2668 \\
	-0.1724 & -0.1077 &  0.3824 & -0.0306
	\end{bmatrix}
\end{align*}\\
	with matching error $1.6288\times10^{-5}$.
	Such matching error was below $ 10^{-3} $ at $ 77.02 $s and below $ 10^{-4} $ at $ 108.01 $s.
	The simulation $ \mathcal{S}_8 $ stops at $ 135.03 $s, providing the final control gains
	\vspace{0.6ex}
	\begin{align*}
	\hat{K}&=\begin{bmatrix}
	0.0026 &  0.8951 &  2.0031 \\
	0.0006 &  0.2586 &  0.5848 \\
	-0.0005 & -0.2967 & -0.6792 \\
	0.0013 &  0.3706 &  0.8189
	\end{bmatrix}\!,\\%[1ex]
	\hat{L}&=\begin{bmatrix}
	 0.6298 &  0.4360 & -0.1505 &  0.4694 \\
	0.1389 &  0.0846 & -0.3706 &  0.0067 \\
	-0.1010 & -0.0410 &  0.8680 &  0.1669 \\
	0.3353 &  0.2521 &  0.5038 &  0.4175
	\end{bmatrix}
	\end{align*}\\
	with matching error $1.6992\times10^{-5}$.
	Such error was below $ 10^{-3} $ at $ 81.81 $s and below $ 10^{-4} $ at $ 111.88 $s.
	%Despite the convergence to different values, as expected from the fact that the matching equations have no unique solution, both achieve model reference control.
	As in $ \mathcal{S}_5 $ and $ \mathcal{S}_6 $, the informative time is $ T^*=5 $ in both $ \mathcal{S}_7 $ and $ \mathcal{S}_8 $.
	In particular, in $ \mathcal{S}_7 $, we obtain
	\vspace{1ex}
	{
	\setlength{\arraycolsep}{2pt}
	\begin{align*}
	&X(5)= \\
	&\!\begin{bmatrix}
	0.5869 & -0.4179 & -1.1217 & -1.6710 & -1.1897 & -1.3823 \\
	-1.1421 & -0.6244 & -0.9753 & 0.5102 & 0.3004 & -0.7382 \\
	0.4350 & 0.4236 & 0.4174 & 0.4076 & 0.4127 & 0.4157
	\end{bmatrix}\!,\qquad
	\end{align*}
	while in $ \mathcal{S}_8 $, we obtain
	\begin{align*}
	&X(5)= \\
	&\!\begin{bmatrix}
	2.0434 & 1.3784 & 0.7991 & -0.3101 & 0.0711 & 0.8231 \\
	-0.7812 & -0.1990 & -1.5956 & 0.3143 & 0.5676 & 0.4209 \\
	0.6150 & 0.6072 & 0.6052 & 0.5892 & 0.5924 & 0.5981
	\end{bmatrix}\!.\qquad
	\end{align*}
	}Figure \ref{errorc} shows the state tracking errors, the norms of the matching error and the input signals for $ \mathcal{S}_7 $ and $ \mathcal{S}_8 $.
%Figures \ref{input3} and \ref{input4} illustrate the convergence of the inputs to constant values since the input signals $ \hat{K}(t)x(t)+\hat{L}(t)r(t) $ are driven by constant reference input signals.
	% As shown in Figure \ref{error} and \ref{difference}, the convergences start even when data are still being collected, and the convergence of tracking errors is achieved much earlier than the parameter errors.

\vspace*{1ex}
%	As compared to Fig. 1, the results in Fig. 2 exhibit slower convergence, which can be explained with the low excitation provided by a constant reference input. 
	It is worth mentioning that in all simulations, no matter if the reference input is drawn from a normal distribution or is constant, the sets of data \emph{never} satisfy the full rank condition \eqref{frc}. 
	Specifically, the data collected in $\mathcal{S}_5$--$\mathcal{S}_8$ at the informative time $ T^*=5 $ satisfy
	\begin{equation*}
	\text{rank}\left(\begin{bmatrix}
	U_-(5)\\X_-(5)
	\end{bmatrix}\right)=5<7,
	\end{equation*} 
	implying that the MRAC problem is solved in scenarios where the collected data do \emph{not} allow unique identification of the system.
	This further confirms that the method proposed in the current paper imposes the least restrictive conditions on data collection compared to existing MRAC methods.

	%All existing work only guarantee the error convergence.
	
	%To show this, we provide the following theorem to illustrate the necessary and sufficient condition for the existence of the ideal controller \eqref{controller}.
	%At this point we stress that we do not make any a priori assumptions on the input data $ U_{-} $.
	%They could have been generated randomly, or from a closed-loop or open-loop control law. 
	%We aim to utilize these past data to design a adaptive law such that the closed-loop system converges to the reference model \eqref{r}. 
	%To this end, denote
	%\begin{equation}\label{sd}
	%X_+:=\begin{bmatrix}
	%x(1)&\cdots&x(T)
	%\end{bmatrix},
	%X_-:=\begin{bmatrix}
	%x(0)&\cdots&x(T-1)
	%\end{bmatrix}.
	%\end{equation}

\section{Conclusions}
	
In this paper, the MRAC problem has been investigated from a novel perspective of data informativity.
Different from previous results on data informativity that used data generated offline, in this paper we have taken an online perspective by verifying data informativity at every time step.
A new MRAC method has been developed that ensures asymptotic convergence of the adaptive gains to a solution of the matching equations, provided such a solution exists.
As a preliminary problem, we have analyzed the relationship between data informativity for (unique) system identification and data informativity for model reference control, demonstrating that the former is only sufficient, but not necessary for informativity for model reference control.
It becomes necessary only in specific scenarios.
Motivated by these findings, we have studied necessary and sufficient conditions for the existence of an adaptive law that guarantees convergence of the adaptive gains to a solution of the matching equations.
Based on this existence analysis, we have devised an input function that switches between an adaptive controller and a term that increases the rank of the data matrix.
Under the assumption that the matching equations have a solution, this input generates informative data for model reference control after a finite number of time steps. 
Moreover, in this situation, the adaptive gains of the controller are shown to converge to a solution of the matching equations.
As compared to state-of-the-art MRAC approaches, the proposed method does not need knowledge of the input matrix and does not require uniqueness of the solution of the matching equations.
A possibility for future research is to explore partial-information scenarios where only input and output data are available.

\bibliographystyle{IEEEtran}
\bibliography{References}

\end{document}